\newcommand{\lsection}[2][""]{%
    \ifthenelse{\equal{#1}{""}}{%
        \section{#2}
    }{%
        \renewcommand{\sectionmark}[1]{\markright{\thesection.\ \MakeUppercase{#1}}}
        \section{#2}
        \renewcommand{\sectionmark}[1]{\markright{\thesection.\ \MakeUppercase{##1}}}
    }
}
\newcommand{\lchapter}[2][""]{%
    \ifthenelse{\equal{#1}{""}}{%
        \chapter{#2}
    }{%
        \renewcommand{\chaptermark}[1]{\markboth{\MakeUppercase{\chaptername\ \thechapter.\ #1}}{}}
        \chapter{#2}
        \renewcommand{\chaptermark}[1]{\markboth{\MakeUppercase{\chaptername\ \thechapter.\ ##1}}{}}
    }
}
\def\Z{\mathbb Z}
\def\R{\mathbb R}
\def\C{\mathbb C}
\def\sC{\mathsf{C}}
\def\sD{\mathsf{D}}
\def\sK{\mathsf{K}}
\def\iff{if and only if}
\def\mfd{manifold}
\def\fcn{function}
\def\str{structure}
\def\h{holomorphic}
\def\r{respectively}
\def\st{such that}
\def\(#1_#2){(#1_1,#1_2,\dots,#1_#2)}
\def\p #1_#2{#1_1#1_2\dots#1_#2}
\def\s#1_#2{#1_1+#1_2+\dots+#1_#2}
\def\iso{isomorphism}
\def\ra{\rightarrow}
\def\lra{\longrightarrow}
\def\hra{\hookrightarrow}
\def\op{\operatorname}
\def\bp{\bar\partial}
\def\ssm{\hspace{-.5mm}\smallsetminus\hspace{-.5mm}}
\def\nbd{neighborhood}
\def\scC{{\mathscr C}}
\def\scR{{\mathscr R}}
\def\scT{{\mathscr T}}
\def\scH{{\mathscr H}}
\def\scF{\mathscr F}
\def\scG{\mathscr G}
\def\scZ{\mathscr Z}
\def\cA{\mathcal A}
\def\V{\mathcal V}
\def\W{\mathcal W}
\def\scO{\mathscr O}
\def\scM{\mathscr M}
\def\scS{\mathscr S}
\def\cS{\mathcal S}
\def\cC{\mathcal C}
\def\scE{\mathscr E}
\def\scK{\mathscr K}
\def\scL{\mathscr L}
\def\scD{\mathscr D}
\def\cI{\mathcal I}
\def\a{\alpha}
\def\b{\beta }
\def\o{\omega}
\def\t{\theta}
\def\z{\zeta}
\def\vG{\varGamma}
\def\vt{\vartheta}
\def\sK{\mathsf{K}}
\def\vv {\vskip.2cm}
\newtheorem{theorem}{Theorem}[section]
\newtheorem{corollary}[theorem]{Corollary}
\newtheorem{proposition}[theorem]{Proposition}
\newtheorem{exa}[theorem]{Example}
\newtheorem{exas}[theorem]{Examples}
\newtheorem{prope}[theorem]{Property}
\newtheorem{defini}[theorem]{Definition}
\newenvironment{definition}{\begin{defini} \em}{\end{defini}}
\newtheorem{rema}[theorem]{Remark}
\newenvironment{remark}{\begin{rema} \em}{\end{rema}}
\newenvironment{equationth}{\stepcounter{theorem}\begin{equation}}{\end{equation}}
\newenvironment{proof}{{\noindent \sc Proof: } }{\mbox{ }\hfill$\Box$  
                        \vspace{1.5ex} \par}
\newtheorem{gauss}{Theorem (Relative de~Rham type theorem)}
\title {\bf  \Large {Representation of relative sheaf cohomology}}
\author{Tatsuo Suwa\thanks{Supported by  JSPS 
Grant 
16K05116.}
}
\date{}
\begin{document}

\pagestyle{plain}


\maketitle


\noindent
{\bf Abstract}

We study the cohomology theory of  sheaf complexes for open embeddings of topological spaces and related subjects.
The theory is situated in the intersection of the general \v{C}ech theory and the theory of derived categories.
That is to say, on the one hand the cohomology is described as the relative cohomology of the sections
of the sheaf complex, which appears naturally in the theory of \v{C}ech cohomology of sheaf complexes.  On the other hand it is interpreted as the cohomology of a complex dual to the mapping
cone of a  certain morphism of complexes in the theory of derived categories. 
We prove a ``relative de~Rham type theorem'' from the above two viewpoints.  It says that, in the case the complex is a soft or fine resolution of a certain sheaf, the cohomology is canonically isomorphic with the relative cohomology of the sheaf. Thus  the former provides a handy way of representing the latter.
Along the way we develop various theories and establishes canonical isomorphisms among the cohomologies
that appear therein. The second viewpoint leads to a generalization of the theory to the case of cohomology
of sheaf morphisms.
Some special cases together with applications are also indicated.\bigskip

\noindent
{\it Keywords}\,:  relative sheaf cohomology; flabby, soft and fine sheaves; 
cohomology for open
embeddings; relative de~Rham type theorem; \v{C}ech cohomology; relative cohomology for the sections of a sheaf complex; co-mapping cone; cohomology of sheaf morphisms.

\vv

\noindent
{\it Mathematics Subject Classification} (2010)\,: 14B15, 14F05, 18G40, 
32C35, 32C36, 35A27, 46M20, 55N05, 55N30, 58A12, 58J15.

\lsection{Introduction} 
The relative cohomology of a sheaf   is usually defined by taking its flabby resolution. The theme of this
paper is how to represent this cohomology.
To be a little more precise, let $\scS$ be a sheaf of Abelian groups on a topological space $X$. For an open set
$X'$ in $X$, the relative cohomology $H^{q}(X,X';\scS)$ is defined, letting $0\ra\scS\ra\scF^{\bullet}$ be  a flabby resolution  of $\scS$, as the cohomology of the complex $\scF^{\bullet}(X,X')$ of sections of $\scF^{\bullet}$ on $X$ that vanish
on $X'$. Theoretically it works well as the flabbiness implies the exactness of the sequence
\begin{equationth}\label{exfl}
0\lra\scF^{\bullet}(X,X')\lra\scF^{\bullet}(X)\overset{i^{-1}}\lra\scF^{\bullet}(X')\lra 0,
\end{equationth}
where $\scF^{\bullet}(X)$ and $\scF^{\bullet}(X')$ denote the complexes of sections of $\scF^{\bullet}$
on $X$ and $X'$, \r, and $i^{-1}$ the restriction of sections.
In practice we would like to have some concrete ways of representing the cohomology. One possibility is to adopt the 
\v{C}ech method. In the absolute case where $X'=\emptyset$, this is commonly used in such areas as
algebraic geometry, complex analytic geometry and analytic functions of several complex variables.
The relative version is used, for instance, in algebraic analysis.
Another way is to use soft or fine resolutions.
Again in the absolute case,  this has been done successfully as culminated in such theorems as 
de~Rham's and  Dolbeault's. They make it possible to represent a cohomology class by a $C^{\infty}$ differential
form and the former provides a bridge between topology and geometry
and the latter between geometry and analysis. In the relative case, this method is not directly applicable, as the
morphism  corresponding to $i^{-1}$ in \eqref{exfl} fails to be surjective. However it is possible to remedy the situation by incorporating
the \v{C}ech philosophy. 
In this paper we pursue this direction and  present a systematical way of representing the cohomology via soft or fine resolutions. Along the way we also establish various canonical \iso s.


In general let $\scK^{\bullet}$ be a complex of fine sheaves on a paracompact space $X$. For an open set $X'$ of $X$, we let $V_{0}=X'$ and $V_{1}$ a \nbd\ of 
the closed set $S=X\ssm X'$ and consider the coverings $\V=\{V_{0},V_{1}\}$ and 
$\V'=\{V_{0}\}$ of $X$ and $X'$. In the sequence corresponding to \eqref{exfl} for $\scK^{\bullet}$, we replace $\scK^{\bullet}(X)$ by the complex $\scK^{\bullet}(\V)$ of triples 
$\xi=(\xi_{0},\xi_{1},\xi_{01})$ with $\xi_{0}$,
$\xi_{1}$ and $\xi_{01}$ sections of $\scK^{\bullet}$ on $V_{0}$, $V_{1}$  and $V_{01}=V_{0}\cap V_{1}$, \r, the differential being defined in an appropriate manner (cf. Section \ref{seccoffine} below for details). Then the morphism $i^{-1}$ corresponds to the assignment $\xi\mapsto\xi_{0}$ and $\scK^{\bullet}(X,X')$ is replaced by
the subcomplex $\scK^{\bullet}(\V,\V')$  of triples $\xi$ with $\xi_{0}=0$ so that a cochain is
a pair $(\xi_{1},\xi_{01})$. Then we have
the exact sequence 
\[
0\lra \scK^{\bullet}(\V,\V')\lra \scK^{\bullet}(\V)\overset{i^{-1}}\lra \scK^{\bullet}(X')\lra 0.
\]
The cohomology of $\scK^{\bullet}(\V,\V')$ a priori depends on the choice of $V_{1}$. However it is shown that the cohomology of $\scK^{\bullet}(\V)$ is canonically isomorphic with that of
$\scK^{\bullet}(X)$ so that the cohomology of $\scK^{\bullet}(\V,\V')$ is determined uniquely modulo canonical \iso s,
independently of the choice of $V_{1}$.
Thus the cohomology is denoted by $H^{q}_{D_{\scK}}(X,X')$ and is called the {\em relative cohomology of the sections of $\scK^{\bullet}$}. In the case $\scK^{\bullet}$ gives a resolution of a sheaf $\scS$, 
$H^{q}_{D_{\scK}}(X,X')$
is canonically isomorphic with $H^{q}(X,X';\scS)$, more precisely we have\,: 
\begin{gauss}
Suppose $X$ and $X'$ are
paracompact. Then,
for any fine resolution $0\ra\scS\ra\scK^{\bullet}$ of a sheaf $\scS$ on  $X$ \st\ each $\scK^{q}|_{X'}$ is fine, there is a canonical \iso\,{\rm : }
\[
H^{q}_{D_{\scK}}(X,X')\simeq  H^{q}(X,X';\scS).
\]
\end{gauss}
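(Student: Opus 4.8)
The plan is to compare the two long exact sequences underlying the two cohomologies and then invoke the five lemma, after matching the two ``outer'' pairs of terms by results already available. On the one hand, fixing a flabby resolution $0\ra\scS\ra\scF^{\bullet}$ (for definiteness an injective one, so that the comparison theorem invoked below applies verbatim; independence of $H^{q}(X,X';\scS)=H^{q}(\scF^{\bullet}(X,X'))$ from the choice of flabby resolution is part of the standard theory), the exact sequence \eqref{exfl} gives the long exact sequence $\cdots\ra H^{q}(X,X';\scS)\ra H^{q}(X;\scS)\ra H^{q}(X';\scS)\ra H^{q+1}(X,X';\scS)\ra\cdots$. On the other hand, the exact sequence displayed just before the theorem gives $\cdots\ra H^{q}_{D_{\scK}}(X,X')\ra H^{q}(\scK^{\bullet}(\V))\ra H^{q}(\scK^{\bullet}(X'))\ra H^{q+1}_{D_{\scK}}(X,X')\ra\cdots$. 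Now $X$ and $X'$ are paracompact and $\scK^{\bullet}$ and $\scK^{\bullet}|_{X'}$ are fine resolutions of $\scS$ and $\scS|_{X'}$, so the de~Rham type theorem in the absolute case (a fine resolution computes sheaf cohomology on a paracompact space) gives canonical \iso s $H^{q}(\scK^{\bullet}(X))\simeq H^{q}(X;\scS)$ and $H^{q}(\scK^{\bullet}(X'))\simeq H^{q}(X';\scS)$; together with the canonical \iso\ $H^{q}(\scK^{\bullet}(\V))\simeq H^{q}(\scK^{\bullet}(X))$ recalled in the Introduction, the middle and right terms of the two sequences are canonically identified. What remains is to promote this to a genuine morphism of the two long exact sequences, whence the five lemma will produce the \iso\ of the statement.

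To build that morphism I would first subject $\scF^{\bullet}$ to the very same \v{C}ech device: with $\V=\{V_{0},V_{1}\}$ and $\V'=\{V_{0}\}$ as in the theorem, form $\scF^{\bullet}(\V)$ and $\scF^{\bullet}(\V,\V')$ exactly as $\scK^{\bullet}(\V)$ and $\scK^{\bullet}(\V,\V')$ are formed in Section \ref{seccoffine}. Because each $\scF^{q}$, and hence each of its restrictions to $V_{0}$, $V_{1}$ and $V_{01}$, is flabby and therefore acyclic on those open sets, the natural chain map $\scF^{\bullet}(X)\ra\scF^{\bullet}(\V)$, $\sigma\mapsto(\sigma|_{V_{0}},\sigma|_{V_{1}},0)$, is a quasi-\iso\ (the flabby counterpart of the \iso\ recalled above for fine complexes; the same bicomplex/\v{C}ech-acyclicity argument applies). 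Combining it with the identity on $\scF^{\bullet}(X')$ and the five lemma yields a canonical \iso\ $H^{q}(\scF^{\bullet}(\V,\V'))\simeq H^{q}(X,X';\scS)$ under which the first long exact sequence above becomes the one attached to the triple $\big(\scF^{\bullet}(\V,\V')\ra\scF^{\bullet}(\V)\ra\scF^{\bullet}(X')\big)$. Next, by the comparison theorem for resolutions there is a morphism of complexes $\varphi\colon\scK^{\bullet}\ra\scF^{\bullet}$ over $\Id_{\scS}$, unique up to homotopy; applying $\varphi$ over $V_{0}$, $V_{1}$, $V_{01}$ and over $X'$ produces a morphism of short exact sequences of complexes $\big(\scK^{\bullet}(\V,\V')\ra\scK^{\bullet}(\V)\ra\scK^{\bullet}(X')\big)\lra\big(\scF^{\bullet}(\V,\V')\ra\scF^{\bullet}(\V)\ra\scF^{\bullet}(X')\big)$, hence a morphism of their long exact sequences.

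To finish, one checks that $\varphi$ induces \iso s on the middle and right columns: on the middle, $H^{q}(\scK^{\bullet}(\V))\ra H^{q}(\scF^{\bullet}(\V))$ is compatible, through the natural maps from $\scK^{\bullet}(X)$ and $\scF^{\bullet}(X)$, with the comparison map $H^{q}(\scK^{\bullet}(X))\ra H^{q}(\scF^{\bullet}(X))$, and the latter is an \iso\ since both sides compute $H^{q}(X;\scS)$; likewise on the right column over the paracompact space $X'$, using that $\scK^{\bullet}|_{X'}$ is fine. The five lemma then forces $H^{q}_{D_{\scK}}(X,X')=H^{q}(\scK^{\bullet}(\V,\V'))\ra H^{q}(\scF^{\bullet}(\V,\V'))\simeq H^{q}(X,X';\scS)$ to be an \iso, and canonicity is immediate: the composite depends neither on $\varphi$ (homotopy-unique on cohomology), nor on $\scF^{\bullet}$ (standard independence), nor on $V_{1}$ (already established). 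I expect the main obstacle to lie precisely in this web of compatibilities — verifying that $\varphi$ is a chain map for the differential of $\scK^{\bullet}(\V)$ introduced in Section \ref{seccoffine} and for its $\scF$-analogue, that the \v{C}ech-acyclicity argument identifying $\scF^{\bullet}(\V)$ with $\scF^{\bullet}(X)$ goes through verbatim in the flabby case, and above all that all the identifications are natural enough that the whole ladder of long exact sequences commutes, so that the two families of connecting homomorphisms correspond. The paper's ``second viewpoint'', reading $\scK^{\bullet}(\V,\V')$ as (the dual of) a mapping cone, should streamline this last point: since $\scK^{\bullet}(\V)$ and $\scK^{\bullet}(X')$ represent $R\Gamma(X;\scS)$ and $R\Gamma(X';\scS)$ compatibly with restriction, the cone represents $R\Gamma(X,X';\scS)$.
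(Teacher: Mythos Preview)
Your proposal is correct and essentially follows the paper's first proof (Theorem~\ref{th}, restated as Theorems~\ref{thdrelsoft} and \ref{thdrelsoft2}): choose a flabby resolution $\scF^{\bullet}$ together with a comparison map $\scK^{\bullet}\ra\scF^{\bullet}$ (the paper obtains this via Proposition~\ref{propexistfl} rather than by invoking an injective resolution), pass to the induced map $\scK^{\bullet}(i)\ra\scF^{\bullet}(i)$ (your $\scK^{\bullet}(\V,\V')\ra\scF^{\bullet}(\V,\V')$, which is literally the same complex by \eqref{rel=cmc}), identify $H^{q}_{d_{\scF}}(i)\simeq H^{q}(X,X';\scS)$ (Proposition~\ref{casflasque}, which is your five-lemma step for $\scF$), and conclude by comparing long exact sequences and the five lemma. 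Your ``web of compatibilities'' worry is exactly what the paper's three-row diagram in the proof of Theorem~\ref{th} makes explicit, including the one anti-commuting square that does not obstruct the five lemma.

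It is worth noting that the paper also gives a genuinely different second proof (Theorem~\ref{thdrel}): instead of comparing with a single flabby resolution, it shows that for \emph{any} pair of coverings $(\W,\W')$ of $(X,X')$ there is an isomorphism $H^{q}_{D_{\scK}}(X,X')\overset\sim\ra H^{q}(\W,\W';\scK^{\bullet})$ (Theorem~\ref{3.2rel}), then uses the natural map $H^{q}(\W,\W';\scS)\ra H^{q}(\W,\W';\scK^{\bullet})$ together with Proposition~\ref{proppara} (passing to the direct limit over coverings) to produce the map $H^{q}(X,X';\scS)\ra H^{q}_{D_{\scK}}(X,X')$, and again finishes by the five lemma against the absolute case. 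Your approach (and the paper's first proof) has the advantage of yielding an explicit chain-level comparison and fitting directly into the derived-category/co-mapping-cone picture; the second proof stays entirely within the \v{C}ech framework and, when good coverings exist (Remark~\ref{remgood}), avoids the direct-limit step altogether.
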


In fact we give two proofs for the above theorem. Namely we first introduce the cohomology $H^{q}_{d_{\scK}}(i)$ of an arbitrary complex $\scK^{\bullet}$ of sheaves for the open embedding $i:X'\hra X$ (cf. Subsection~\ref{ssoemb}). On the one hand it is nothing but
$H^{q}_{D_{\scK}}(X,X')$ with $V_{1}=X$, if $\scK^{\bullet}$ is a complex of fine sheaves. On the other hand it is interpreted as the cohomology of a ``co-mapping cone'', a notion dual to the mapping cone in the theory of derived categories (cf. Section~\ref{secder}).
The latter viewpoint fits  nicely with soft resolutions and we  prove the above theorem 
in this context (cf. Theorems~\ref{thdrelsoft} and 
\ref{threldRder}). While this first proof is a little abstract, the second proof, which is for fine resolutions, employs coverings and is more direct
(cf. Theorem 
\ref{thdrel}).
In any case the   cohomology $H^{q}_{D_{\scK}}(X,X')$
goes well with derived functors.
Furthermore the above theorem is generalized to the case of cohomology for sheaf morphisms 
(cf. Theorem~\ref{genreldR}).
\vv

Historically this combination of  soft or fine resolutions with the \v{C}ech method started with the introduction of the \v{C}ech-de~Rham cohomology theory (cf. \cite{W}, \cite{BT}).
In particular, 
the relative version together with its integration theory 
has been effectively used 
 in various problems related to localization of characteristic classes (cf. \cite{BSS}, \cite{Leh2}, \cite{Su2}, \cite{Su7} and references therein).
Likewise we may develop 
the \v{C}ech-Dolbeault cohomology theory and on the way we naturally come up with the  relative Dolbeault cohomology. This cohomology again has a number of applications (cf. \cite{ABST}, \cite{Su8} and \cite{Su10}). 
The above theorem applied to this case shows that it is canonically isomorphic with the local (relative) cohomology of A.~Grothendieck and M.~Sato with coefficients in the sheaf of \h\ forms (cf. \cite{Ha} and \cite{Sa}).  
In particular,  if we apply this to the Sato hyperfunction theory, we  have
simple 
 explicit expressions of  hyperfunctions, some
fundamental operations on them and related local duality theorems. This approach also gives a new insight into the theory and
leads to a number of results that can hardly be achieved by the conventional way  (cf. \cite{HIS} and \cite{Su12}). 
\vv

The paper is organized as follows. In Section \ref{sechyp},   we first recall the cohomology theory for sheaf complexes. Although 
the materials
are rather well-known,
we outline them in order to fix  notation and conventions and also to describe the \iso s explicitly.
We then introduce the cohomology $H^{q}_{d_{\scK}}(i)$ of a sheaf complex $\scK^{\bullet}$ for an open embedding $i:X'\hra X$. This is the basic object
we study in this paper and later it is interpreted in two ways, as mentioned above. One is as the relative cohomology $H^{q}_{D_{\scK}}(X,X')$ of 
sections of a sheaf complex  and is done from the \v{C}ech theoretical viewpoint  in Section~\ref{seccoffine}.
The other is as the co-mapping cone
of a certain morphism of  complexes, which is done in Section~\ref{secder}.
We prove the aforementioned relative de ~Rham type theorem for soft resolutions (Theorem~\ref{thdrelsoft}).
Although it is a special case of a more general result (Theorem~\ref{genreldR}), which is a direct consequence of 
a theorem proved 
 in \cite{K2}, we state it and give a proof for its independent interest.

We   develop, in Section \ref{seccech}, a general theory of \v{C}ech cohomology of sheaf  complexes  and discuss  canonical
 \iso s among various cohomologies which come up in the construction. This is more or less a straightforward
 generalization of the \v{C}ech-de~Rham cohomology theory.  We present the theory so that the \iso s are  canonical and the correspondences in them are trackable.
 We then specialize the theory to the case of complexes of fine sheaves and  state
 the \iso s above in this case (Theorem~\ref{thsummary}). 
 In Section \ref{seccoffine}, we introduce the relative cohomology $H^{q}_{D_{\scK}}(X,X')$ for the sections of a sheaf complex $\scK^{\bullet}$.
 As mentioned above it gives an interpretation of the cohomology
 $H^{q}_{d_{\scK}}(i)$. We also give an alternative proof of the 
 relative de~Rham type theorem for fine resolutions (Theorem \ref{thdrel}).

 In Section \ref{secder}, we introduce the aforementioned notion 
 of   co-mapping cone.
We then see that the complex $\scK^{\bullet}(i)$  introduced in Section \ref{sechyp} is given as
 the co-mapping cone of a certain morphism of complexes. This leads to a statement of the relative de~Rham type theorem
 in terms of derived functors
(Theorem \ref{threldRder}).  In Section~\ref{seccohsmor}  we introduce, following \cite{K2}, the cohomology for sheaf morphisms, which
generalizes the relative sheaf cohomology.  Then we give a representation theorem (Theorem~\ref{genreldR})
 generalizing Theorem~\ref{thdrelsoft}. Finally we discuss, in Section~\ref{secPC}, some special cases and indicate
applications in each case.
\vv


The author would like to thank Naofumi Honda for stimulating discussions and 
valuable comments during the preparation of the paper.

\lsection{Cohomology of sheaf  complexes for open embeddings}\label{sechyp}

In the sequel, by a sheaf we mean a sheaf with at least the \str\ of  Abelian groups.
For a sheaf $\scS$ on a topological space $X$ and a subset $A$ of $X$, 
we denote by $\scS(A)$ 
the group of
sections of $\scS$ on $A$. 
Also, for a subset $A'$ of $A$, we denote by $\scS(A,A')$ the subgroup of 
$\scS(A)$ consisting of sections 
that vanish on $A'$. 

A complex $\scK$ of sheaves is a collection 
$(\scK^{q},d^{q}_{\scK})_{q\in\Z}$, where $\scK^{q}$ is a sheaf on $X$ and $d^{q}_{\scK}:\scK^{q}\ra \scK^{q+1}$ is a
morphism, called  {\em differential}, with $d^{q+1}_{\scK}\circ d_{\scK}^{q}=0$.  We omit the subscript or superscript on $d$ if there is no fear of confusion. The complex is also denoted by $(\scK^{\bullet},d)$ or $\scK^{\bullet}$. We only consider  the case $\scK^{q}=0$ for $q<0$.
We say that $\scK$ is a resolution of $\scS$ if there is a morphism $\iota:\scS\ra\scK^{0}$ \st\ the following 
sequence is exact\,:
\[
0\lra \scS\overset\iota\lra \scK^{0}\overset{d}\lra\cdots\overset{d}\lra \scK^{q}\overset{d}\lra\cdots.
\]
We abbreviate this by saying that $0\ra\scS\ra\scK^{\bullet}$ is a resolution.
We come back to generalities on complexes in Subsection \ref{sscat} below.
\subsection{Cohomology via flabby resolutions}
As  reference cohomology theory, we adopt the one via flabby resolutions (cf. \cite{B}, \cite{G}, \cite{KKK}, \cite{K}).
Recall that a sheaf $\scF$ is {\em flabby} if the restriction 
$\scF(X)\ra\scF(V)$ is surjective for every open set $V$ in $X$.

Let $\scS$ be a sheaf on $X$. We may use any flabby resolution of $\scS$ to define the cohomology of $\scS$, however
 we take the canonical resolution (Godement resolution), to fix the idea\,:
\[
0\lra \scS\lra \scC^{0}(\scS)\overset{d}\lra\cdots\overset{d}\lra \scC^{q}(\scS)\overset{d}\lra\cdots.
\]
 The $q$-th cohomology $H^{q}(X;\scS)$ of $X$ with coefficients in $\scS$ is the $q$-th cohomology
of the complex $(\scC^{\bullet}(\scS)(X),d)$.
For a subset $A$ of $X$, $H^{q}(A;\scS)$ denotes $H^{q}(A;\scS|_{A})$, where $\scS|_{A}$ is the restriction of $\scS$ to $A$.

More generally, for an open set $X'$ in $X$,
we denote by $H^{q}(X,X';\scS)$ 
the $q$-th cohomology of 
$(\scC^{\bullet}(\scS)(X,X'),d)$.
Note  that $H^{q}(X,\emptyset;\scS)=H^{q}(X;\scS)$. 
Setting $S=X\ssm X'$, it will also be denoted by $H^{q}_{S}(X;\scS)$. 
This cohomology in the first expression is referred to as the {\em relative cohomology} of $\scS$ on $(X,X')$
(cf.  \cite{Sa}) and in the
second expression the {\em local cohomology} of 
$\scS$ on $X$ with support in $S$
(cf. \cite{Ha}).

We recall some of the basic facts\,:
\begin{proposition}\label{propfl} The above cohomology has the following properties{\rm \,:}
\vv

\noindent
{\rm (1)} $H^{0}(X,X';\scS)=\scS(X,X')$.
\vv

\noindent
{\rm (2)} For a flabby sheaf $\scF$, $H^{q}(X,X';\scF)=0$ for $q\ge 1$.
\vv

\noindent
{\rm (3)} For a triple $(X,X',X'')$ with $X''$ an open set in $X'$, there is an  exact sequence
\[
\cdots\ra H^{q-1}(X',X'';\scS)
\overset{\delta}
\ra H^{q}(X,X';\scS)
\overset{j^{-1}}
\ra H^{q}(X,X'';\scS)
\overset{i^{-1}}
\ra H^{q}(X',X'';\scS)\ra\cdots.
\]
\noindent
{\rm (4)} {\rm (Excision)}
For any open set $V$ in $X$ containing $S$, there is a canonical
\iso{\rm \,:}
\[
H^{q}(X,X\ssm S;\scS)\simeq H^{q}(V,V\ssm S;\scS).
\]
\vv

\noindent
{\rm (5)} For an exact sequence 
\[
0\lra\scS'\lra\scS\lra\scS''\lra 0
\]
of sheaves, there is an  exact sequence
\[
\cdots\ra H^{q-1}(X,X';\scS'')
\ra H^{q}(X,X';\scS')
\ra H^{q}(X,X';\scS)
\ra H^{q}(X,X';\scS'')\ra\cdots.
\]
\end{proposition}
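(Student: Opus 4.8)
The plan is to work throughout with the Godement resolution $\scC^{\bullet}(\scS)$, using two of its standard features: it is functorial and exact in $\scS$, and each $\scC^{q}(\scS)$ is flabby (and remains flabby after restriction to an open set, while $\scC^{q}(\scS)|_{U}=\scC^{q}(\scS|_{U})$ since the construction is stalkwise, so the restricted complex computes the cohomology over $U$). Part (1) is then immediate: since $\Gamma(X,-)$ and the subfunctor ``vanish on $X'$'' are left exact and $0\to\scS\to\scC^{0}(\scS)\to\scC^{1}(\scS)$ is exact, the kernel of $d^{0}$ on $\scC^{0}(\scS)(X,X')$ is exactly $\scS(X,X')$.

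For (3) I would first establish, for the triple $(X,X',X'')$, the short exact sequence of complexes
\[
0\lra\scC^{\bullet}(\scS)(X,X')\lra\scC^{\bullet}(\scS)(X,X'')\overset{i^{-1}}\lra\scC^{\bullet}(\scS)(X',X'')\lra 0 .
\]
Exactness on the left and in the middle is formal; the point is surjectivity of $i^{-1}$: given a section $s$ of the flabby sheaf $\scC^{q}(\scS)$ on $X'$ that vanishes on $X''$, flabbiness extends it to a section $\tilde s$ on $X$, and since $X''\subseteq X'$ this extension still vanishes on $X''$, so $\tilde s\in\scC^{q}(\scS)(X,X'')$ with $i^{-1}\tilde s=s$. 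The associated long exact cohomology sequence is (3). Part (2) is the special case $X''=\emptyset$ of this long exact sequence combined with the acyclicity of flabby sheaves: for $\scF$ flabby, $H^{q}(X;\scF)=H^{q}(X';\scF)=0$ for $q\ge 1$ (using that $\scF|_{X'}$ is again flabby) and $\scF(X)\to\scF(X')$ is surjective, so the sequence forces $H^{q}(X,X';\scF)=0$ for all $q\ge 1$.

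For excision (4), write $S=X\ssm X'$, so $X=V\cup X'$ with $V\cap X'=V\ssm S$; I claim the restriction map $\scC^{\bullet}(\scS)(X,X')\to\scC^{\bullet}(\scS)(V,V\ssm S)$ is an isomorphism of complexes. A section on $X$ vanishing on $X'$ is supported in $S\subseteq V$, hence is determined by its restriction to $V$ (it already vanishes on $X\ssm V\subseteq X'$), giving injectivity; conversely a section on $V$ vanishing on $V\ssm S$ agrees with the zero section on the overlap $V\cap X'$ of the open cover $\{V,X'\}$ of $X$, hence glues with $0$ on $X'$ to a section on $X$ vanishing on $X'$. These assignments are mutually inverse and commute with $d$, so they pass to cohomology. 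Finally, for (5) I would apply the exact functor $\scC^{\bullet}(-)$ to the given short exact sequence of sheaves, obtaining a short exact sequence $0\to\scC^{\bullet}(\scS')\to\scC^{\bullet}(\scS)\to\scC^{\bullet}(\scS'')\to 0$ of complexes of flabby sheaves, and then take sections on $X$ vanishing on $X'$. Left exactness is automatic; for surjectivity, lift $\bar s\in\scC^{q}(\scS'')(X,X')$ first to some $s\in\scC^{q}(\scS)(X)$ (possible since the flabby kernel $\scC^{q}(\scS')$ makes $\Gamma(X,-)$ exact on this sequence), note that $s|_{X'}$ maps to $0$ in $\scC^{q}(\scS'')(X')$ hence is a section of the subsheaf $\scC^{q}(\scS')$, extend it to $t\in\scC^{q}(\scS')(X)$ by flabbiness, and replace $s$ by $s-t$, which still lifts $\bar s$ but now vanishes on $X'$. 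The long exact cohomology sequence of the resulting short exact sequence of complexes is (5).

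The routine homological inputs — the long exact sequence attached to a short exact sequence of complexes, the acyclicity of flabby sheaves, and the exactness of the Godement functor — are standard and I would simply cite them. The only genuine work lies in the three surjectivity/gluing steps in (3), (4), (5), and in each the crux is the same: flabbiness extends sections from an open subset to all of $X$, and one must arrange the extension to preserve the relevant vanishing condition, which works precisely because the locus on which vanishing is imposed ($X''\subseteq X'$, the support being inside $V$, or $X'$) is compatible with the open sets in play. I expect that bookkeeping, rather than any conceptual difficulty, to be the main point to get right.
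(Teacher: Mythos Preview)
Your proof is correct and follows essentially the same route the paper indicates: the paper only remarks that (3) comes from the short exact sequence of complexes $0\to\scC^{\bullet}(\scS)(X,X')\to\scC^{\bullet}(\scS)(X,X'')\to\scC^{\bullet}(\scS)(X',X'')\to 0$ and that (5) follows from exactness of $\scC^{\bullet}(\ )$ together with exactness of $0\to\scC^{\bullet}(\scS')(X,X')\to\scC^{\bullet}(\scS)(X,X')\to\scC^{\bullet}(\scS'')(X,X')\to 0$, leaving (1), (2), (4) as well-known. Your write-up fills in exactly these details, including the gluing argument for excision and the flabby-extension trick for the surjectivities in (3) and (5); the only stylistic difference is that you deduce (2) from (3) plus acyclicity of flabby sheaves rather than proving it directly, which is perfectly fine.
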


Note that the exact sequence in (3) above arises from  the exact sequence
\begin{equationth}\label{sexactfl}
0\lra \scC^{\bullet}(\scS)(X,X')\overset{j^{-1}}\lra \scC^{\bullet}(\scS)(X,X'')\overset{i^{-1}}\lra \scC^{\bullet}(\scS)(X',X'')\lra 0,
\end{equationth}
where $i^{-1}$ and $j^{-1}$ denote the morphisms induced  by the inclusions $i: (X',X'')\hra (X,X'')$ 
and $j:(X,X'')\hra (X,X')$  (cf. Proposition \ref{proples} below). Also, (5) follows from the facts that
$\scC^{\bullet}(\ )$ is an exact functor and that the following sequence is exact\,:
\[
0\lra \scC^{\bullet}(\scS')(X,X')\lra \scC^{\bullet}(\scS)(X,X')\lra \scC^{\bullet}(\scS'')(X,X')\lra 0.
\]

\begin{remark} The cohomology
$H^{q}(X,X';\scS)$
 is determined uniquely modulo canonical \iso s, independently of the flabby
resolution. 
Although this fact is well-known, we indicate a proof below in order to make  the correspondence explicit
(cf. Corollary~\ref{corfl}). 
\end{remark}
\subsection{Cohomology of sheaf complexes}\label{sscohsc}

Let $\scK=(\scK^{q},d_{\scK})$
be a complex of sheaves on a topological space $X$.
For each $q$, we take the canonical resolution $0\ra\scK^{q}\ra\scC^{\bullet}(\scK^{q})$  whose differential is
denoted by $\delta^{\rm G}$. The differential $d_{\scK}:\scK^{q}\ra\scK^{q+1}$ induces a morphism of
complexes $\scC^{\bullet}(\scK^{q})\ra\scC^{\bullet}(\scK^{q+1})$, which is  denoted also by $d_{\scK}$.
Thus we have a double complex $(\scC^\bullet(\scK^\bullet),\delta^{\rm G},(-1)^{\bullet}d_{\scK})$.
We consider the associated single complex 
$(\scC(\scK)^{\bullet}, D_{\scK}^{\rm G})$, where
\[
\scC(\scK)^{q}=\bigoplus_{q_{1}+q_{2}=q}\scC^{q_{1}}(\scK^{q_{2}}),\qquad D^{\rm G}_{\scK}=\delta^{\rm G}+(-1)^{q_{1}}d_{\scK}.
\]
Then there is an exact sequence   of complexes\,:
\begin{equationth}\label{emb}
0\lra \scK^{\bullet}\overset\kappa\lra \scC(\scK)^{\bullet},
\end{equationth}
which is given by $\scK^{q}\hra\scC^{0}(\scK^{q})\subset \scC(\scK)^{q}$ for each $q$.

\begin{definition}\label{gdefhypercohom} Let $X'$
be an open set in $X$.
The cohomology $H^{q}(X,X';\scK^\bullet)$ of $\scK^\bullet$ on $(X,X')$  is the  cohomology of 
$(\scC(\scK)^{\bullet}(X,X'), D_{\scK}^{\rm G})$. 
\end{definition}


If $X'=\emptyset$, we
denote $H^{q}(X,X';\scK^\bullet)$ by $H^{q}(X;\scK^\bullet)$.
We have $H^{0}(X,X';\scK^\bullet)=\scS(X,X')$, where $\scS$ is the kernel of $d:\scK^{0}\ra\scK^{1}$.


In the above situation, we have a complex $(\scK^\bullet(X,X'),d_{\scK})$, whose cohomology is denoted by 
$H^{q}_{d_{\scK}}(X,X')$. From \eqref{emb}, we have an exact sequence   of complexes\,:
\begin{equationth}\label{emb2}
0\lra \scK^{\bullet}(X,X')\overset\kappa\lra \scC(\scK)^{\bullet}(X,X'),
\end{equationth}
which induces a morphism
\[
\varphi_{(X,X')}:H^{q}_{d_{\scK}}(X,X')\lra H^{q}(X,X';\scK^\bullet).
\]


\begin{proposition}\label{gfirstiso} Suppose $H^{q_{2}}(X,X';\scK^{q_{1}})=0$ for $q_{1}\ge 0$ and $q_{2}\ge 1$. Then $\varphi_{(X,X')}$ is an \iso\ for all $q$, i.e., $\kappa$ in \eqref{emb2} is a quasi-\iso\
\ {\rm (cf. Subsection \ref{ssder})}, in this case.
\end{proposition}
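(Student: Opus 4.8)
The plan is to read $(\scC(\scK)^{\bullet}(X,X'),D^{\rm G}_{\scK})$ as the total complex of the first-quadrant double complex
\[
K^{p,q}:=\scC^{p}(\scK^{q})(X,X'),\qquad p,q\ge 0,
\]
with horizontal differential $\delta^{\rm G}:K^{p,q}\ra K^{p+1,q}$ and vertical differential $(-1)^{p}d_{\scK}:K^{p,q}\ra K^{p,q+1}$ (this is legitimate since $\scC(\scK)^{q}=\bigoplus_{q_{1}+q_{2}=q}\scC^{q_{1}}(\scK^{q_{2}})$ and taking $(X,X')$-sections commutes with finite direct sums), and to regard $\kappa$ of \eqref{emb2} as the augmentation of this double complex by the single complex $(\scK^{\bullet}(X,X'),d_{\scK})$, placed in horizontal degree $-1$ and induced by $\scK^{q}\hra\scC^{0}(\scK^{q})$. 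The role of the hypothesis is precisely that each row of the augmented double complex is exact: for fixed $q$, the cohomology of $(K^{\bullet,q},\delta^{\rm G})=\scC^{\bullet}(\scK^{q})(X,X')$ is by definition $H^{\bullet}(X,X';\scK^{q})$, which equals $\scK^{q}(X,X')$ in horizontal degree $0$ by Proposition~\ref{propfl}\,(1) and vanishes in positive horizontal degree by assumption, so that
\[
0\lra\scK^{q}(X,X')\overset{\kappa}\lra K^{0,q}\overset{\delta^{\rm G}}\lra K^{1,q}\overset{\delta^{\rm G}}\lra\cdots
\]
is exact. We are then in the classical situation of an augmented first-quadrant double complex with exact rows, in which the augmentation is a quasi-\iso\ onto the total complex; since $\varphi_{(X,X')}$ is by construction the map on cohomology induced by $\kappa$, it is an \iso.

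Concretely I would establish surjectivity of $\varphi_{(X,X')}$ by a downward staircase. Let $z=\sum_{p=0}^{n}z_{p}$, $z_{p}\in K^{p,n-p}$, be a $D^{\rm G}_{\scK}$-cocycle. Inspecting the component of $D^{\rm G}_{\scK}z$ in $K^{n+1,0}$ gives $\delta^{\rm G}z_{n}=0$; if $n\ge 1$, exactness of the $0$-th row at horizontal degree $n$ produces $w\in K^{n-1,0}$ with $\delta^{\rm G}w=z_{n}$, and $z-D^{\rm G}_{\scK}w$ is a cocycle in the same class with top component in degree $\le n-1$. Iterating (the first-quadrant hypothesis guarantees termination) reduces $z$ to $z_{0}\in K^{0,n}$, which is a cocycle exactly when $\delta^{\rm G}z_{0}=0$ and $d_{\scK}z_{0}=0$. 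Exactness of the $n$-th row at the augmentation yields a unique $x\in\scK^{n}(X,X')$ with $\kappa(x)=z_{0}$, and $\kappa(d_{\scK}x)=d_{\scK}z_{0}=0$ together with injectivity of $\kappa$ forces $d_{\scK}x=0$; thus $[z]=\varphi_{(X,X')}([x])$. Injectivity is the same computation in one lower total degree: if $\kappa(x)=D^{\rm G}_{\scK}y$ with $d_{\scK}x=0$ and $y=\sum_{p=0}^{n-1}y_{p}$, the analogous descent using exactness of the rows in positive horizontal degree reduces $y$ to $y_{0}\in K^{0,n-1}$ with $\delta^{\rm G}y_{0}=0$ and $d_{\scK}y_{0}=\kappa(x)$, whence $y_{0}=\kappa(x')$ and $x=d_{\scK}x'$.

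A slicker alternative is to run the spectral sequence of $K^{\bullet,\bullet}$ obtained by taking horizontal ($\delta^{\rm G}$-)cohomology first: by the exactness of the rows it is concentrated in the column $p=0$, where $E_{1}^{0,q}=\scK^{q}(X,X')$ with induced differential $d_{\scK}$, so it degenerates at $E_{2}$ and gives $H^{q}(X,X';\scK^{\bullet})\simeq E_{\infty}^{0,q}=H^{q}_{d_{\scK}}(X,X')$, the edge homomorphism being $\varphi_{(X,X')}$. Either way the argument is purely formal; the only points needing care are the signs $(-1)^{p}$ in $D^{\rm G}_{\scK}$ during the staircase, the remark that every auxiliary cochain may be chosen to vanish on $X'$ (automatic, since all the exactness statements are already stated at the level of $(X,X')$-sections), and the degenerate low-degree cases $n=0,1$ of the inductions. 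None of these is a real obstacle: the essential input is exactly the row-exactness supplied by the hypothesis, and the rest is the standard lemma on augmented double complexes.
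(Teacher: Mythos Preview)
Your proposal is correct and takes essentially the same approach as the paper: the paper's proof is precisely your ``slicker alternative'', namely the spectral sequence of the double complex $\scC^{\bullet}(\scK^{\bullet})(X,X')$ obtained by taking $\delta^{\rm G}$-cohomology first, which collapses to $\scK^{\bullet}(X,X')$ by the hypothesis. Your explicit staircase argument is simply an unwinding of this degeneration and gives the same result by hand.
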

\begin{proof} 
We consider one of the  spectral sequences associated with
the double complex $\scC^\bullet(\scK^\bullet)(X,X')$\,:
\[
'\hspace{-.6mm}E_2^{q_{1},q_{2}}=H^{q_{1}}_dH^{q_{2}}_\delta(\scC^\bullet(\scK^\bullet)(X,X'))\Longrightarrow H^{q}(X,X';\scK^\bullet),
\]
where we denote $d_{\scK}$ and $\delta^{\rm G}$ simply by $d$ and $\delta$.
By assumption, 
$H^{q_{2}}_\delta(\scC^\bullet(\scK^{q_{1}})(X,X'))=H^{q_{2}}(X,X';\scK^{q_{1}})=0$ for $q_{1}\ge 0$ and $q_{2}\ge 1$, while
$H^0_\delta(\scC^\bullet(\scK^{q_{1}})(X,X'))=\scK^{q_{1}}(X,X')$.
\end{proof}


Let $\scS$ denote   the kernel of $d:\scK^{0}\ra\scK^{1}$.  Then there is an exact sequence of complexes
\[
0\lra\scC^{\bullet}(\scS)\lra\scC(\scK)^{\bullet},
\]
which is given by $\scC^{q}(\scS)\hra \scC^{q}(\scK^0)\subset \scC(\scK)^{q}$. It induces
\[
0\lra\scC^{\bullet}(\scS)(X,X')\lra\scC(\scK)^{\bullet}(X,X')
\]
and
\[
\psi_{(X,X')}:H^{q}(X,X';\scS)\lra H^{q}(X,X';\scK^\bullet).
\]


\begin{proposition}\label{gsecondiso} 
If $0\ra\scS\ra\scK^{\bullet}$ is a resolution, then $\psi_{(X,X')}$ is an \iso.
\end{proposition}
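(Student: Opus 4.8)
The plan is to exhibit $\psi_{(X,X')}$ as an isomorphism by using a spectral sequence argument on the double complex $\scC^{\bullet}(\scK^{\bullet})(X,X')$, but now running the \emph{other} filtration from the one used in Proposition~\ref{gfirstiso}. First I would consider the spectral sequence
\[
{}''\hspace{-.6mm}E_{1}^{q_{1},q_{2}}=H^{q_{2}}_{d}(\scC^{q_{1}}(\scK^{\bullet})(X,X'))\Longrightarrow H^{q}(X,X';\scK^{\bullet}),
\]
where $d=(-1)^{\bullet}d_{\scK}$ and the outer differential is $\delta^{\rm G}$. The key point is that $\scC^{q_{1}}(-)$ is an exact functor, so applying it to the resolution $0\ra\scS\ra\scK^{\bullet}$ yields an exact sequence of sheaves $0\ra\scC^{q_{1}}(\scS)\ra\scC^{q_{1}}(\scK^{0})\ra\scC^{q_{1}}(\scK^{1})\ra\cdots$; since each $\scC^{q_{1}}(\scK^{q_{2}})$ is flabby and the kernel $\scC^{q_{1}}(\scS)$ is also flabby, this is a flabby (hence section-exact, even after passing to $(X,X')$ by Proposition~\ref{propfl}(2) applied to the triple) resolution. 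Therefore taking sections over $(X,X')$ preserves exactness in positive degrees and computes $\scC^{q_{1}}(\scS)(X,X')$ in degree zero: ${}''\hspace{-.6mm}E_{1}^{q_{1},0}=\scC^{q_{1}}(\scS)(X,X')$ and ${}''\hspace{-.6mm}E_{1}^{q_{1},q_{2}}=0$ for $q_{2}\ge 1$.

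Next I would observe that the spectral sequence therefore degenerates at $E_{2}$, with ${}''\hspace{-.6mm}E_{2}^{q_{1},0}=H^{q_{1}}_{\delta^{\rm G}}(\scC^{\bullet}(\scS)(X,X'))=H^{q_{1}}(X,X';\scS)$ and all other terms vanishing, so the edge homomorphism gives an isomorphism $H^{q}(X,X';\scS)\simeq H^{q}(X,X';\scK^{\bullet})$. The remaining task is to check that this edge isomorphism coincides with the map $\psi_{(X,X')}$ defined in the text via the inclusion of complexes $\scC^{\bullet}(\scS)(X,X')\hra\scC(\scK)^{\bullet}(X,X')$. This is a compatibility statement: the inclusion $\scC^{q}(\scS)\hra\scC^{q}(\scK^{0})\subset\scC(\scK)^{q}$ is precisely the map that, at the level of the double complex, lands in the $q_{2}=0$ row, and the edge homomorphism of the second spectral sequence is by construction induced by exactly this inclusion of the bottom row's cohomology. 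So $\psi_{(X,X')}$ and the edge map are the same morphism, and hence $\psi_{(X,X')}$ is an isomorphism.

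The main obstacle I anticipate is the bookkeeping around signs and the precise identification of the edge homomorphism with $\psi_{(X,X')}$: one must be careful that the total differential $D_{\scK}^{\rm G}=\delta^{\rm G}+(-1)^{q_{1}}d_{\scK}$ restricts correctly to the subcomplex $\scC^{\bullet}(\scS)(X,X')$ (it does, since $d_{\scK}$ kills $\scS$, so only the $\delta^{\rm G}$ part survives there), and that the filtration by $q_{1}$ is the one whose associated graded recovers this. A secondary subtlety is verifying that $\scC^{q_{1}}(\scS)$ is flabby—this follows because the Godement functor $\scC^{q_{1}}$ produces flabby sheaves from arbitrary sheaves, applied with $\scS$ itself as input—so that Proposition~\ref{propfl}(2) can be invoked to get vanishing of $H^{q_{2}}(X,X';\scC^{q_{1}}(\scS))$ and hence exactness of the $(X,X')$-sections of the flabby resolution of $\scC^{q_{1}}(\scS)$ in the $q_{2}$-direction. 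Once these points are nailed down, the argument is a routine double-complex comparison parallel to the one already given for Proposition~\ref{gfirstiso}.
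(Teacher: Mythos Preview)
Your proposal is correct and follows essentially the same approach as the paper: both use the second spectral sequence of the double complex $\scC^{\bullet}(\scK^{\bullet})(X,X')$, observe that the Godement functor is exact and lands in flabby sheaves so that taking $(X,X')$-sections of $0\ra\scC^{q_{1}}(\scS)\ra\scC^{q_{1}}(\scK^{\bullet})$ remains exact, and conclude degeneration. Your additional care in identifying the edge map with $\psi_{(X,X')}$ and in tracking signs is more explicit than the paper's version, but the underlying argument is the same.
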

\begin{proof} 
We consider 
the other spectral sequence associated with
$\scC^\bullet(\scK^\bullet)(X,X')$\,: 
\[
''\hspace{-.6mm}E_2^{q_{1},q_{2}}=H^{q_{1}}_\delta H^{q_{2}}_d(\scC^\bullet(\scK^\bullet)(X,X'))\Longrightarrow H^{q}(X,X';\scK^\bullet).
\]
From the assumption,  $0\ra\scC^{q_{1}}(\scS)\ra\scC^{q_{1}}(\scK^{\bullet})$ is an exact sequence of flabby sheaves and thus 
$0\ra\scC^{q_{1}}(\scS)(X,X')\ra\scC^{q_{1}}(\scK^{\bullet})(X,X')$ is exact.
Hence 
$H^{q_{2}}_d(\scC^{q_{1}}(\scK^\bullet)(X,X'))=0$
for  $q_{2}\ge 1$, while
$H^0_d(\scC^{q_{1}}(\scK^\bullet)(X,X'))=\scC^{q_{1}}(\scS)(X,X')$.
\end{proof}



From Propositions \ref{gfirstiso} and \ref{gsecondiso} we have\,:

\begin{theorem}\label{gnatisos} {\bf 1.} For any resolution $0\ra\scS\ra \scK^{\bullet}$, there is a canonical 
morphism
\[
\chi_{(X,X')}:H^{q}_{d_{\scK}}(X,X')\lra H^{q}(X,X';\scS),
\]
where $\chi_{(X,X')}=(\psi_{(X,X')})^{-1}\circ\varphi_{(X,X')}$.
\vv

\noindent
{\bf 2.} Moreover, if $H^{q_{2}}(X,X';\scK^{q_{1}})=0$ for $q_{1}\ge 0$ and $q_{2}\ge 1$, then 
$\chi_{(X,X')}$ is an \iso.
\end{theorem}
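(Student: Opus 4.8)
The plan is to obtain both parts as a formal consequence of Propositions~\ref{gfirstiso} and~\ref{gsecondiso}, the only real content being to record that the relevant maps are canonical. For part~\textbf{1}, I would first invoke Proposition~\ref{gsecondiso}: since $0\ra\scS\ra\scK^{\bullet}$ is assumed to be a resolution, the morphism $\psi_{(X,X')}:H^{q}(X,X';\scS)\ra H^{q}(X,X';\scK^{\bullet})$ induced by the inclusion $\scC^{\bullet}(\scS)(X,X')\hra\scC(\scK)^{\bullet}(X,X')$ is an \iso\ for every $q$; hence $(\psi_{(X,X')})^{-1}$ is defined unambiguously. On the other hand, the inclusion \eqref{emb2} always induces $\varphi_{(X,X')}:H^{q}_{d_{\scK}}(X,X')\ra H^{q}(X,X';\scK^{\bullet})$ with no hypothesis on $\scK^{\bullet}$. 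Then set $\chi_{(X,X')}=(\psi_{(X,X')})^{-1}\circ\varphi_{(X,X')}$, which lands in $H^{q}(X,X';\scS)$ as claimed.

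For part~\textbf{2}, add the vanishing hypothesis $H^{q_{2}}(X,X';\scK^{q_{1}})=0$ for $q_{1}\ge 0$ and $q_{2}\ge 1$. This is exactly the hypothesis of Proposition~\ref{gfirstiso}, which then gives that $\varphi_{(X,X')}$ is an \iso\ for all $q$. Since $\chi_{(X,X')}$ is by construction the composite of $\varphi_{(X,X')}$ with the \iso\ $(\psi_{(X,X')})^{-1}$, it is an \iso.

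To justify the word ``canonical'' I would point out that the double complex $\scC^{\bullet}(\scK^{\bullet})$, hence the single complex $(\scC(\scK)^{\bullet},D^{\rm G}_{\scK})$, is built from the Godement (canonical) resolution and so involves no choices, and that both the embedding $\kappa$ of \eqref{emb}--\eqref{emb2} and the inclusion $\scC^{\bullet}(\scS)\hra\scC(\scK)^{\bullet}$ induced by $\scC^{q}(\scS)\hra\scC^{q}(\scK^{0})$ are the natural structural morphisms; passing to sections vanishing on $X'$ and to cohomology is functorial. Thus $\varphi_{(X,X')}$ and $\psi_{(X,X')}$, and therefore $\chi_{(X,X')}$, depend on nothing beyond the data of the resolution.

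There is no serious obstacle here: the theorem is a packaging of the two preceding propositions, and the proof is essentially one line once those are in hand. The only point requiring a word of care is that Proposition~\ref{gsecondiso} must be available before $(\psi_{(X,X')})^{-1}$ is written down, i.e.\ one must use the resolution hypothesis in part~\textbf{1} and not merely in part~\textbf{2}; this is already reflected in the statement, so nothing more is needed.
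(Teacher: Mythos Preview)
Your proposal is correct and matches the paper's approach exactly: the theorem is stated in the paper as an immediate consequence of Propositions~\ref{gfirstiso} and~\ref{gsecondiso}, with no further argument given. Your additional remarks on canonicity and on the role of the resolution hypothesis in part~\textbf{1} are accurate and appropriate elaborations of what the paper leaves implicit.
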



\begin{corollary}\label{corfl} For any flabby resolution $0\ra\scS\ra\scF^{\bullet}$, there is a canonical \iso\,{\rm :}
\[
\chi_{(X,X')}:H^{q}_{d_{\scF}}(X,X')\overset\sim\lra H^{q}(X,X';\scS).
\]
\end{corollary}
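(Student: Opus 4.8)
The plan is to obtain the corollary as the flabby special case of Theorem~\ref{gnatisos}; the only thing requiring verification is that a flabby resolution meets the vanishing hypothesis in part~2 of that theorem. Since $0\to\scS\to\scF^{\bullet}$ is a resolution, Theorem~\ref{gnatisos}.1 already furnishes the canonical morphism $\chi_{(X,X')}=(\psi_{(X,X')})^{-1}\circ\varphi_{(X,X')}\colon H^{q}_{d_{\scF}}(X,X')\to H^{q}(X,X';\scS)$, canonical because $\varphi_{(X,X')}$ and $\psi_{(X,X')}$ were built from canonical (Godement) data, with no auxiliary choices involved.

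First I would record that each term $\scF^{q_{1}}$ of the resolution is a flabby sheaf, so by Proposition~\ref{propfl}(2) we have $H^{q_{2}}(X,X';\scF^{q_{1}})=0$ for all $q_{1}\ge 0$ and all $q_{2}\ge 1$ --- which is exactly the hypothesis of Theorem~\ref{gnatisos}.2 with $\scK^{\bullet}=\scF^{\bullet}$. One should note in passing that $H^{q_{2}}(X,X';\scF^{q_{1}})$ here means the relative cohomology of the single sheaf $\scF^{q_{1}}$, computed as in Proposition~\ref{propfl} from its canonical resolution, which is precisely the quantity entering that hypothesis. Then Theorem~\ref{gnatisos}.2 immediately gives that $\chi_{(X,X')}$ is an isomorphism for every $q$, completing the proof.

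I do not anticipate any real obstacle: the argument is essentially a one-line deduction from Theorem~\ref{gnatisos} together with the basic vanishing Proposition~\ref{propfl}(2), all the substantive work having been carried out in the spectral-sequence comparisons of Propositions~\ref{gfirstiso} and~\ref{gsecondiso}. The only point meriting a sentence of care, if any, is the bookkeeping that makes ``canonical'' literally true, i.e.\ the independence of $\varphi_{(X,X')}$ and $\psi_{(X,X')}$ from auxiliary choices; but this is inherited directly from the constructions preceding Theorem~\ref{gnatisos}.
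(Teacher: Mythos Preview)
Your proposal is correct and matches the paper's intended argument: the corollary is stated immediately after Theorem~\ref{gnatisos} with no separate proof, precisely because it is the special case $\scK^{\bullet}=\scF^{\bullet}$ with the vanishing hypothesis supplied by Proposition~\ref{propfl}(2). Your write-up makes this explicit and adds nothing beyond what the paper leaves implicit.
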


\begin{remark} Suppose $H^{q_{2}}(X;\scK^{q_{1}})=0$ and $H^{q_{2}}(X';\scK^{q_{1}})=0$ for $q_{1}\ge 0$ and $q_{2}\ge 1$. Then $H^{q_{2}}(X,X';\scK^{q_{1}})=0$ for $q_{1}\ge 0$ and $q_{2}\ge 2$. Thus in this case,  if
$H^{1}(X,X';\scK^{q_{1}})=0$ for $q_{1}\ge 0$, the hypothesis of Theorem \ref{gnatisos}.\,2 is fulfilled (cf.
\cite[Theorem 3.3]{K2}).
\end{remark}

Let $\scK^{\bullet}$ be a complex of sheaves on $X$. We come back to the double complex $\scC^{\bullet}(\scK^{\bullet})$ and the associated
single complex $\scC(\scK)^{\bullet}$.

\begin{proposition}\label{propexistfl} {\bf 1.} The morphism $\kappa$  in \eqref{emb} induces
an \iso
\[
H^{q}(\scK^{\bullet})\overset\sim\lra H^{q}(\scC(\scK)^{\bullet}),
\]
i.e., it is a quasi-\iso.
\vv

\noindent
{\bf 2.}
Suppose $0\ra\scS\overset\iota\ra\scK^{\bullet}$ is a resolution. Then 
the composition of $\iota:\scS\ra\scK^{0}$ and  $\kappa^{0}:\scK^{0}\ra\scC(\scK)^{0}$ leads to a flabby resolution 
$0\ra\scS\ra\scC(\scK)^{\bullet}$ of $\scS$
so that the following diagram is commutative\,{\rm :}
\[
\SelectTips{cm}{}
\xymatrix@C=.7cm
@R=.6cm
{ 0\ar[r]& \scS\ar[r] \ar@{=}[d]&\scK^{\bullet}\ar[d]^-{\kappa}\\
0\ar[r] & \scS \ar[r] & {\scC(\scK)}^{\bullet}.}
\]
\end{proposition}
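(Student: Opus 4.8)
The plan is to prove Part 1 by a spectral sequence argument and Part 2 by bookkeeping with the embedding $\kappa$ together with the already-established exactness of the Godement functor. For Part 1, I would run the spectral sequence of the double complex $\scC^{\bullet}(\scK^{\bullet})$ (with no relative condition, i.e. just taking global sections on $X$, or better, argue at the level of sheaves themselves before taking sections) in which one first takes $\delta^{\rm G}$-cohomology. Since $0\ra\scK^{q}\ra\scC^{\bullet}(\scK^{q})$ is the canonical resolution, it is exact, so the only surviving row is $q_{2}=0$, where it reduces to $\scK^{q_{1}}$; the remaining differential is then $d_{\scK}$ (up to sign), so the $E_{2}$-page is concentrated in one row and equals $H^{q}(\scK^{\bullet})$. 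Hence the edge homomorphism, which is exactly the map induced by $\kappa$ in \eqref{emb}, is an isomorphism. This is essentially the same computation as in the proof of Proposition \ref{gfirstiso}, specialized to $X'=\emptyset$ and applied at the sheaf level (or with global sections, invoking flabbiness of $\scC^{\bullet}(\scK^{q})$), so I would phrase it as such and keep it brief. An equivalent and perhaps cleaner route: $\kappa:\scK^{\bullet}\ra\scC(\scK)^{\bullet}$ is a morphism of complexes of sheaves which, stalkwise, is a quasi-isomorphism because the column complexes $0\ra\scK^{q}_{x}\ra\scC^{\bullet}(\scK^{q})_{x}$ are exact; being a stalkwise quasi-isomorphism, $\kappa$ is a quasi-isomorphism of complexes of sheaves.

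For Part 2, assume $0\ra\scS\overset{\iota}\ra\scK^{\bullet}$ is a resolution. The composite $\scS\overset{\iota}\ra\scK^{0}\overset{\kappa^{0}}\ra\scC(\scK)^{0}=\scC^{0}(\scK^{0})$ is the canonical map $\scS\hra\scC^{0}(\scS)\hra\scC^{0}(\scK^{0})$ (using that $\scS$ is the kernel of $d:\scK^{0}\ra\scK^{1}$ and that $\scC^{0}(\ )$ preserves inclusions), and I would record that the diagram in the statement commutes by construction of $\kappa$. It remains to check two things: that each $\scC(\scK)^{q}$ is flabby, and that $0\ra\scS\ra\scC(\scK)^{\bullet}$ is exact. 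Flabbiness is immediate since $\scC(\scK)^{q}=\bigoplus_{q_{1}+q_{2}=q}\scC^{q_{1}}(\scK^{q_{2}})$ is a finite direct sum of Godement sheaves, each of which is flabby, and a finite direct sum of flabby sheaves is flabby. For exactness, I would combine Part 1 with the fact that $\kappa:\scK^{\bullet}\ra\scC(\scK)^{\bullet}$ is a quasi-isomorphism: since $0\ra\scS\ra\scK^{\bullet}$ is exact, the complex $\scK^{\bullet}$ has cohomology sheaves $\scS$ in degree $0$ and $0$ elsewhere; by Part 1 the same holds for $\scC(\scK)^{\bullet}$, and the degree-$0$ cohomology sheaf is identified with $\scS$ via the composite above. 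That is precisely the assertion that $0\ra\scS\ra\scC(\scK)^{\bullet}$ is a resolution, and it is flabby by the previous sentence.

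The main obstacle is a minor one: being careful that the identification of the $0$-th cohomology sheaf of $\scC(\scK)^{\bullet}$ with $\scS$ is compatible with the given map $\scS\ra\scC(\scK)^{0}$, rather than merely abstractly isomorphic — this is what makes the displayed square commute and is needed for the statement to be useful downstream (e.g. in Corollary \ref{corfl}-type applications). I would handle this by tracing through the spectral sequence (or the stalkwise argument) explicitly: the edge map $H^{0}(\scK^{\bullet})=\scS\ra H^{0}(\scC(\scK)^{\bullet})$ is induced by $\kappa^{0}$, and on the target side $H^{0}(\scC(\scK)^{\bullet})=\Ker(D^{\rm G}_{\scK}:\scC(\scK)^{0}\ra\scC(\scK)^{1})$; one checks a section of $\scC^{0}(\scK^{0})$ lies in this kernel iff it is killed by both $\delta^{\rm G}$ and $d_{\scK}$, i.e. iff it comes from $\scS\hra\scC^{0}(\scS)$. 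Everything else is formal, so the proof should be short.
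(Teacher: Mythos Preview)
Your proposal is correct and follows essentially the same approach as the paper: for Part 1 you run the spectral sequence of the double complex $\scC^{\bullet}(\scK^{\bullet})$ taking $\delta^{\rm G}$-cohomology first (exactly as the paper does), and for Part 2 you note that each $\scC(\scK)^{q}$ is a finite direct sum of flabby sheaves and deduce exactness from Part 1, which is precisely the paper's argument. Your additional remarks on the stalkwise variant and on tracing the compatibility of the degree-$0$ identification are more explicit than what the paper records, but they are welcome checks rather than a different route.
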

\begin{proof} 1. Consider one of the  spectral sequences associated with
the double complex $\scC^\bullet(\scK^\bullet)$\,:
\[
'\hspace{-.6mm}E_2^{q_{1},q_{2}}=H^{q_{1}}_dH^{q_{2}}_\delta(\scC^\bullet(\scK^\bullet))\Longrightarrow H^{q}(\scC(\scK)^\bullet).
\]
We have $H^{q_{2}}_\delta(\scC^\bullet(\scK^{q_{1}}))=0$ for $q_{1}\ge 0$ and $q_{2}\ge 1$, while
$H^0_\delta(\scC^\bullet(\scK^{q_{1}}))=\scK^{q_{1}}$.
\smallskip

\noindent
2. For each $q$, the sheaf $\scC(\scK)^{q}$ is flabby, being a direct sum of flabby
sheaves. The rest follows from 1. Note that the other spectral sequence leads to the same conclusion.
\end{proof}

\begin{remark} {\bf 1.} The cohomology $H^{q}(X,X';\scK^{\bullet})$ in Definition \ref{gdefhypercohom} is
sometimes referred to as the hypercohomology of $\scK^{\bullet}$.
\vv
\noindent
{\bf 2.} 
We may explicitly describe the correspondence in each of the above \iso s, as explained more
in detail in the case
of \v{C}ech cohomology below.
For example, in Theorem~\ref{gnatisos}  we think of a  cocycle $s$ in $\scK^{q}(X,X')$  and a cocycle $\gamma$ in $\scC^{q}(\scS)(X,X')$  as being cocycles in $\scC(\scK)^{q}(X,X')$.
The classes $[s]$ and $[\gamma]$ correspond in the above \iso, \iff\ $s$ and $\gamma$ define the same class in 
$H^{q}(X,X';\scK^{\bullet})$, i.e., there exists a $(q-1)$-cochain $\chi$ in $\scC(\scK)^{q-1}(X,X')$
 \st
\[
s-\gamma=D^{\rm G}_{\scK}\chi,
\]
see the 
remark after Theorem \ref{natisos} and Remark \ref{remcano} below.
%
\end{remark}

\subsection{Cohomology for open embeddings}\label{ssoemb}

Let $X$ be a topological space and $X'$  an open set in $X$ with inclusion $i:X'\hra X$. For a complex
of sheaves $\scK^{\bullet}$ on $X$, we construct a complex $\scK^{\bullet}(i)$ as follows.
We set
\[
\scK^{q}(i)=\scK^{q}(X)\oplus \scK^{q-1}(X')
\]
and define the differential 
\[
d:\scK^{q}(i)=\scK^{q}(X)\oplus \scK^{q-1}(X')
\lra \scK^{q+1}(i)=\scK^{q+1}(X)\oplus \scK^{q}(X')
\] 
by 
\[
d(s,t)=(ds,i^{-1}s-dt),
\]
where $i^{-1}:\scK^{q}(X)\ra \scK^{q}(X')$ denotes the pull-back of sections by $i$, the restriction
to $X'$ in this case. Obviously we have $d\circ d=0$.

\begin{definition} The 
cohomology $H^{q}_{d_{\scK}}(i)$ of $\scK^{\bullet}$ for 
$i:X'\hra X$
 is the
 cohomology of $(\scK^{\bullet}(i),d)$.
\end{definition}

\begin{remark} {\bf 1.}  This kind of cohomology is considered in \cite{BT} for the de~Rham complexes
on $C^{\infty}$ \mfd s (cf. Remark~\ref{remcohmap}.\,2 below).
\smallskip

\noindent
{\bf 2.} The complex $\scK^{\bullet}(i)$ is nothing but the co-mapping cone $M^{*}(i^{-1})$ of the
morphism $i^{-1}:\scK^{\bullet}(X)\ra\scK^{\bullet}(X')$ (cf. Subsection \ref{ssrelcohcom} below). It is also identical with the complex $\scK^{\bullet}(\V^{\star},\V')$ considered in Section \ref{seccoffine} and the cohomology $H^{q}_{d_{\scK}}(i)$ is equal  to $H^{q}_{D_{\scK}}(X,X')$,  the relative cohomology for the sections of $\scK^{\bullet}$
on $(X,X')$ (cf. \eqref{rel=cmc} and \eqref{comaprel}).
\smallskip

\noindent
{\bf 3.} The above cohomology is generalized to that of sheaf complex morphisms in Section~\ref{seccohsmor}.
\end{remark}

Denoting by $\scK[-1]^{\bullet}$ the complex with $\scK[-1]^{q}=\scK^{q-1}$ and 
$d_{\scK[-1]}^{q}=-d_{\scK}^{q-1}$,
we  define morphisms  $\a^{*}:\scK^{\bullet}(i)\ra \scK^{\bullet}(X)$ and $\b^{*}:\scK[-1]^{\bullet}(X')
\ra \scK^{\bullet}(i)$
by
\[
\begin{aligned}
&{\a^{*}}:\scK^{q}(i)=\scK^{q}(X)\oplus \scK^{q}(X')^{q-1}\lra \scK^{q}(X),\qquad (s,t)\mapsto s,\qquad\text{and}\\ 
&\b^{*}:\scK[-1]^{q}(X')=\scK^{q-1}(X')\lra \scK^{q}(i)=\scK^{q}(X)\oplus \scK^{q-1}(X'),\qquad t\mapsto (0,t).
\end{aligned}
\]
Then we have the exact sequence of complexes
\begin{equationth}\label{sexactcomemb}
0\lra \scK[-1]^{\bullet}(X')\overset{\b^{*}}\lra \scK^{\bullet}(i)\overset{\a^{*}}\lra \scK^{\bullet}(X)\lra 0,
\end{equationth}
which gives rise to the exact sequence
\begin{equationth}\label{exactcomemb}
\cdots\lra H^{q-1}_{d_{\scK}}(X')\overset{\b^{*}}\lra H^{q}_{d_{\scK}}(i)\overset{\a^{*}}\lra 
H^{q}_{d_{\scK}}(X)\overset{i^{-1}}\lra H^{q}_{d_{\scK}}(X')\lra\cdots.
\end{equationth}

If we define $\rho:\scK^{\bullet}(X,X')\ra\scK^{\bullet}(i)$ by $s\mapsto (s,0)$, from the definitions, we see that it is a morphism of complexes.

\begin{proposition}\label{casflasque} Let $\scF^{\bullet}$ be a complex of flabby sheaves on $X$. Then the above morphism  induces an \iso
\[
\rho:H^{q}_{d_{\scF}}(X,X')\overset\sim\lra H^{q}_{d_{\scF}}(i).
\]
\end{proposition}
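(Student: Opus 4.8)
The plan is to exhibit $\rho$ as the inclusion of a subcomplex of $\scF^{\bullet}(i)$ whose cokernel is acyclic, and then read off the asserted isomorphism from the long exact cohomology sequence. Since $\rho(s)=(s,0)$ is injective in each degree, there is a short exact sequence of complexes
\[
0\lra \scF^{\bullet}(X,X')\overset{\rho}\lra \scF^{\bullet}(i)\lra C^{\bullet}\lra 0,\qquad C^{q}=\scF^{q}(i)/\rho\bigl(\scF^{q}(X,X')\bigr).
\]
If $C^{\bullet}$ turns out to be acyclic, the associated long exact cohomology sequence degenerates into isomorphisms $H^{q}_{d_{\scF}}(X,X')\overset\sim\lra H^{q}_{d_{\scF}}(i)$ induced by $\rho$, which is the assertion. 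Everything thus reduces to identifying $C^{\bullet}$ and checking that it has no cohomology.

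Identifying $C^{\bullet}$ is the place where flabbiness is used. By definition $\scF^{q}(X,X')=\Ker\bigl(i^{-1}\colon\scF^{q}(X)\to\scF^{q}(X')\bigr)$, and $i^{-1}$ is surjective because $\scF^{q}$ is flabby; hence it induces an isomorphism $\scF^{q}(X)/\scF^{q}(X,X')\overset\sim\lra\scF^{q}(X')$. As $\rho$ leaves the summand $\scF^{q-1}(X')$ of $\scF^{q}(i)=\scF^{q}(X)\oplus\scF^{q-1}(X')$ untouched, one gets an identification $C^{q}\simeq\scF^{q}(X')\oplus\scF^{q-1}(X')$, under which the class of $(s,t)$ corresponds to $(i^{-1}s,t)$ and the differential $d(s,t)=(ds,\,i^{-1}s-dt)$ descends to $\bar d(\bar s,t)=(d\bar s,\,\bar s-dt)$ on $\scF^{q}(X')\oplus\scF^{q-1}(X')$. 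In other words, $C^{\bullet}$ is precisely the complex produced by the construction of this subsection applied to the identity map $X'\hra X'$ in place of $i$ — the co-mapping cone of $\mathrm{id}_{\scF^{\bullet}(X')}$.

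It then remains to verify that $C^{\bullet}$ is acyclic, and I would do this by writing down a contracting homotopy rather than chasing cocycles. Setting $h\colon C^{q}\to C^{q-1}$, $h(\bar s,t)=(-t,0)$, a direct computation gives $\bar d\,h+h\,\bar d=-\,\mathrm{id}_{C^{\bullet}}$, so the identity of $C^{\bullet}$ is null-homotopic and $H^{q}(C^{\bullet})=0$ for all $q$, which completes the proof. (A five-lemma argument would also work: compare the tautological long exact sequence of $0\to\scF^{\bullet}(X,X')\to\scF^{\bullet}(X)\to\scF^{\bullet}(X')\to0$, exact by flabbiness, with \eqref{exactcomemb} along the ladder whose middle vertical is $\rho$, using that $\a^{*}\circ\rho$ is the inclusion $\scF^{\bullet}(X,X')\hra\scF^{\bullet}(X)$; the minor nuisance there is matching the sign of the snake connecting map against $\b^{*}$, which the cokernel approach sidesteps.) The only real obstacle in either route is the bookkeeping of the induced differential on $C^{\bullet}$ together with the accompanying signs; once $C^{\bullet}$ is correctly recognized as the cone of an identity map, its acyclicity is automatic.
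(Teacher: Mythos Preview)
Your proof is correct, but it takes a different route from the paper. The paper does exactly the five-lemma argument you sketch in your parenthetical remark: it compares the long exact sequence of $0\to\scF^{\bullet}(X,X')\to\scF^{\bullet}(X)\to\scF^{\bullet}(X')\to 0$ (exact by flabbiness) with \eqref{exactcomemb}, observes that the ladder commutes except for the leftmost square, where $\rho\circ\delta=-\b^{*}$, and then invokes the five lemma. Your main argument instead passes to the cokernel of $\rho$, identifies it with the cone of the identity on $\scF^{\bullet}(X')$, and kills it with an explicit contracting homotopy. This is a cleaner package: it sidesteps the sign-matching between $\delta$ and $\b^{*}$ that the paper has to note, and it makes transparent \emph{why} the result holds (the quotient is a cone on an identity). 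The paper's approach, on the other hand, has the virtue of displaying the compatibility of $\rho$ with the two natural long exact sequences, which is used again later in the paper. Both routes use flabbiness at the same point, namely the surjectivity of $i^{-1}:\scF^{q}(X)\to\scF^{q}(X')$.
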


\begin{proof} We have the commutative diagram with exact rows\,:
\[
\SelectTips{cm}{}
\xymatrix
@C=.7cm
@R=.5cm
{{}&0\ar[r]& \scF^{\bullet}(X,X')\ar[d]^-{\rho}\ar[r]^-{j^{-1}}&\scF^{\bullet}(X)\ar[r]^-{i^{-1}}\ar@{=}[d]&\scF^{\bullet}(X')\ar[r]&0\\
 0\ar[r]&\scF[-1]^{\bullet}(X')\ar[r]^-{\b^{*}} & \scF^{\bullet}(i)\ar[r]^-{\a^{*}}&\scF^{\bullet}(X)\ar[r]&0,&{}}
\]
which gives rise to the diagram
\[
\SelectTips{cm}{}
\xymatrix
@C=.7cm
@R=.5cm
{\cdots\ar[r]&H^{q-1}_{d_{\scF}}(X')\ar[r]^-{\delta}\ar@{=}[d]& H^{q}_{d_{\scF}}(X,X')\ar[d]^-{\rho}\ar[r]^-{j^{-1}}&H^{q}_{d_{\scF}}(X)\ar[r]^-{i^{-1}}\ar@{=}[d]&H^{q}_{d_{\scF}}(X')\ar[r]\ar@{=}[d]&\cdots\\
 \cdots\ar[r]&H^{q-1}_{d_{\scF}}(X')\ar[r]^-{\b^{*}} & H^{q}_{d_{\scF}}(i)\ar[r]^-{\a^{*}}&H^{q}_{d_{\scF}}(X)\ar[r]^-{i^{-1}}&H^{q}_{d_{\scF}}(X')\ar[r]&\cdots,}
\]
where $\delta$ assigns to the class of $t$ the class of $d\tilde t$ with $\tilde t$ an extension of $t$ to $X$.
The rows are exact and the diagram is commutative, except for the  rectangle at the left, where $\rho\circ\delta=-\b^{*}$. Thus we may apply the five lemma to prove the proposition.
\end{proof}

Note that the above is a special case of Proposition \ref{propqis} below.

\begin{corollary} If $0\ra\scS\ra\scF^{\bullet}$ is a flabby resolution, there is a canonical   \iso
\[
H^{q}_{d_{\scF}}(i)\overset\sim\lra H^{q}(X,X';\scS),
\]
which is given by $\chi_{(X,X')}\circ\rho^{-1}$ with $\chi_{(X,X')}$ the \iso\ of Corollary \ref{corfl}.
\end{corollary}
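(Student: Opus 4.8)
The plan is to deduce the corollary directly by composing the two isomorphisms already at our disposal; the whole argument is formal.

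First I would recall that $\rho:\scF^{\bullet}(X,X')\ra\scF^{\bullet}(i)$, $s\mapsto(s,0)$, is a morphism of complexes, as noted just before Proposition~\ref{casflasque}. Since $\scF^{\bullet}$ is a complex of flabby sheaves, that proposition makes $\rho$ a quasi-\iso, so that $\rho:H^{q}_{d_{\scF}}(X,X')\overset\sim\lra H^{q}_{d_{\scF}}(i)$ for every $q$ and the inverse $\rho^{-1}$ is available. Next I would invoke Corollary~\ref{corfl}, which applies because $0\ra\scS\ra\scF^{\bullet}$ is a flabby resolution: it supplies the canonical \iso\ $\chi_{(X,X')}:H^{q}_{d_{\scF}}(X,X')\overset\sim\lra H^{q}(X,X';\scS)$. (Here $\chi_{(X,X')}=(\psi_{(X,X')})^{-1}\circ\varphi_{(X,X')}$ of Theorem~\ref{gnatisos}, whose hypothesis in part~2 holds because each $\scF^{q_{1}}$ is flabby, hence $H^{q_{2}}(X,X';\scF^{q_{1}})=0$ for $q_{2}\ge 1$ by Proposition~\ref{propfl}.\,2.) The composite
\[
\chi_{(X,X')}\circ\rho^{-1}:H^{q}_{d_{\scF}}(i)\overset\sim\lra H^{q}(X,X';\scS)
\]
is therefore an \iso, which is precisely the map in the statement.

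It remains to justify ``canonical''. I would simply note that $\rho$ is the explicit, choice-free chain map $s\mapsto(s,0)$, and that $\chi_{(X,X')}$ is canonical and, moreover, independent of the flabby resolution modulo canonical \iso s (Corollary~\ref{corfl} together with the remark following Proposition~\ref{propfl}); hence the composite inherits both features. There is no genuine obstacle in this corollary — it is a packaging of earlier results. If a concrete description of the correspondence is wanted, one can chase a cocycle $(s,t)\in\scF^{q}(i)$, i.e.\ $ds=0$ and $i^{-1}s=dt$, through the proof of Proposition~\ref{casflasque}: choose an extension $\tilde t\in\scF^{q-1}(X)$ of $t$ (using flabbiness), observe that $(s,t)$ is cohomologous in $\scF^{\bullet}(i)$ to $(s-d\tilde t,0)$ with $s-d\tilde t\in\scF^{q}(X,X')$, and then feed $[\,s-d\tilde t\,]$ into $\chi_{(X,X')}$ via the recipe in the remark after Proposition~\ref{propexistfl}. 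The only place asking for a little care is sign bookkeeping, since the left-hand square of the diagram in Proposition~\ref{casflasque} commutes only up to $\rho\circ\delta=-\b^{*}$; this does not affect the \iso\ assertion.
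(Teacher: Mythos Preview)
Your proof is correct and matches the paper's approach exactly: the corollary is stated in the paper without a separate proof, as the statement itself already names the composite $\chi_{(X,X')}\circ\rho^{-1}$, making clear it is the immediate combination of Proposition~\ref{casflasque} and Corollary~\ref{corfl}. Your additional remarks on canonicity and the explicit cocycle chase are accurate elaborations but not needed for the argument.
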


\begin{theorem}\label{th}  Suppose $0\ra\scS\ra\scK^{\bullet}$ is a  resolution of $\scS$ \st\
 $H^{q_{2}}(X;\scK^{q_{1}})=0$ and $H^{q_{2}}(X';\scK^{q_{1}})=0$ for $q_{1}\ge 0$ and $q_{2}\ge 1$.
 Then there is a canonical  \iso\,{\rm :}
\[
H^{q}_{d_{\scK}}(i)\simeq H^{q}(X,X';\scS).
\]
\end{theorem}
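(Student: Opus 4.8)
\noindent
The plan is to reduce Theorem~\ref{th} to the flabby case treated in the Corollary following Proposition~\ref{casflasque}, by replacing $\scK^{\bullet}$ with its Godement single complex $\scC(\scK)^{\bullet}$ and checking that this replacement does not change $H^{q}_{d_{\scK}}(i)$. Recall from Proposition~\ref{propexistfl}.\,2 that $0\ra\scS\ra\scC(\scK)^{\bullet}$ is a flabby resolution and that the canonical morphism $\kappa\colon\scK^{\bullet}\ra\scC(\scK)^{\bullet}$ of \eqref{emb} is a quasi-\iso\ of complexes of sheaves. The construction $\scK^{\bullet}\mapsto\scK^{\bullet}(i)$ of Subsection~\ref{ssoemb} is functorial in the sheaf complex, since it is built only from the section groups $\scK^{q}(X)$, $\scK^{q-1}(X')$ and the restriction $i^{-1}$, with all of which $\kappa$ is compatible; hence $\kappa$ induces a morphism of complexes $\kappa(i)\colon\scK^{\bullet}(i)\ra\scC(\scK)^{\bullet}(i)$, namely $(s,t)\mapsto(\kappa s,\kappa t)$.

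First I would show $\kappa(i)$ is a quasi-\iso. It carries the exact sequence \eqref{sexactcomemb} for $\scK^{\bullet}$ to the one for $\scC(\scK)^{\bullet}$, giving a commutative diagram with exact rows
\[
\SelectTips{cm}{}
\xymatrix@C=.5cm @R=.5cm{
0\ar[r]&\scK[-1]^{\bullet}(X')\ar[r]^-{\b^{*}}\ar[d]&\scK^{\bullet}(i)\ar[r]^-{\a^{*}}\ar[d]^-{\kappa(i)}&\scK^{\bullet}(X)\ar[r]\ar[d]&0\\
0\ar[r]&\scC(\scK)[-1]^{\bullet}(X')\ar[r]^-{\b^{*}}&\scC(\scK)^{\bullet}(i)\ar[r]^-{\a^{*}}&\scC(\scK)^{\bullet}(X)\ar[r]&0,
}
\]
whose outer vertical maps are induced by $\kappa$ on the sections over $X'$ and over $X$. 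On passing to the long exact cohomology sequences \eqref{exactcomemb}, these outer maps are (up to the degree shift in $\scK[-1]$) the morphisms $\varphi_{(X,\emptyset)}$ and its analogue $\varphi_{(X',\emptyset)}$ for $X'$; they are \iso s by Proposition~\ref{gfirstiso}, since the hypotheses $H^{q_{2}}(X;\scK^{q_{1}})=0$ and $H^{q_{2}}(X';\scK^{q_{1}})=0$ for $q_{2}\ge 1$ are precisely what that proposition requires when applied to $X$ and to $X'$, each with empty relative part. The five lemma then yields an \iso\ $\kappa(i)\colon H^{q}_{d_{\scK}}(i)\overset\sim\lra H^{q}_{d_{\scC(\scK)}}(i)$ for all $q$.

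It remains to compose with the canonical \iso\ $H^{q}_{d_{\scC(\scK)}}(i)\overset\sim\lra H^{q}(X,X';\scS)$ furnished by the Corollary following Proposition~\ref{casflasque} — which applies because $0\ra\scS\ra\scC(\scK)^{\bullet}$ is a flabby resolution — and is given there by $\chi_{(X,X')}\circ\rho^{-1}$. This produces the asserted isomorphism
\[
H^{q}_{d_{\scK}}(i)\ \xrightarrow{\ \sim\ }\ H^{q}_{d_{\scC(\scK)}}(i)\ \xrightarrow{\ \rho^{-1}\ }\ H^{q}_{d_{\scC(\scK)}}(X,X')\ \xrightarrow{\ \chi_{(X,X')}\ }\ H^{q}(X,X';\scS),
\]
which is canonical, being assembled from the canonical Godement resolution and the canonical \iso s already established. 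The one point needing care — routine but not to be skipped — is verifying that $\kappa(i)$ is a genuine chain map commuting with $\a^{*}$, $\b^{*}$ and the connecting homomorphisms of the two long exact sequences; this is immediate from the formula $d(s,t)=(ds,\,i^{-1}s-dt)$ together with the compatibility of $\kappa$ with $d$, with $D^{\rm G}_{\scK}$ and with $i^{-1}$. (One could instead compare \eqref{exactcomemb} directly with the relative cohomology sequence of Proposition~\ref{propfl}\,(3) for $X''=\emptyset$, using Theorem~\ref{gnatisos}.\,2 on $(X,\emptyset)$ and $(X',\emptyset)$ for the outer terms; but constructing the middle vertical arrow and checking its compatibility amounts to the same work, and one cannot here invoke Theorem~\ref{gnatisos}.\,2 on $(X,X')$ itself, as the present hypotheses need not force $H^{1}(X,X';\scK^{q_{1}})=0$.)
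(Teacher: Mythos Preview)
Your proof is correct and follows essentially the same approach as the paper: pass from $\scK^{\bullet}$ to the flabby Godement resolution $\scC(\scK)^{\bullet}$ via $\kappa$, compare the two long exact sequences \eqref{exactcomemb}, and conclude by the five lemma that $\kappa(i)$ is a quasi-\iso. The paper organizes this into a single three-row diagram (with the middle row the sequence of Proposition~\ref{propfl}\,(3) and the $\chi$'s of Theorem~\ref{gnatisos} linking it to both the $\scK$-row and the $\scF$-row), whereas you factor the argument as a two-row comparison followed by an appeal to the already-proved flabby case; but this is a presentational difference only.
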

\begin{proof} By Proposition \ref{propexistfl}, there exist a flabby resolution $0\ra\scS\ra\scF^{\bullet}$
and a morphism $\kappa:\scK^{\bullet}\ra \scF^{\bullet}$
 \st\ the following diagram is commutative\,:
\[
\SelectTips{cm}{}
\xymatrix@C=.7cm
@R=.6cm
{ 0\ar[r]& \scS\ar[r] \ar@{=}[d]&\scK^{\bullet}\ar[d]^-{\kappa}\\
0\ar[r] & \scS \ar[r] & \scF^{\bullet}.}
\]
The morphism $\kappa$ induces morphisms $\kappa:\scK^{\bullet}(X)\ra\scF^{\bullet}(X)$
and $\kappa':\scK^{\bullet}(X')\ra\scF^{\bullet}(X')$. They in turn induce a morphism 
$\kappa(i):\scK^{\bullet}(i)\ra \scF^{\bullet}(i)$,  given by $\kappa(i)^{q}=\kappa^{q}\oplus (\kappa')^{q-1}$.
We have the following diagram\,:
\[
\SelectTips{cm}{}
\xymatrix @C=-0.34cm 
@R=0.5cm
{\cdots\ar[rr]&&H^{q-1}_{d_{\scK}}(X')\ar[rr]\ar[rd]_(.45){\chi}^(.65){\rotatebox{-28}{{\hspace{-4.5mm}}$\sim$}}\ar[dd]_(.3){\kappa'}|\hole&&H^{q}_{d_{\scK}}(i)\ar[rr]\ar[dd]_(.3){\kappa(i)}|\hole&&H^{q}_{d_{\scK}}(X)\ar[dd]_(.3){\kappa}|\hole
\ar[rd]_(.45){\chi}^(.65){\rotatebox{-28}{{\hspace{-4.5mm}}$\sim$}}\ar[rr]&&H^{q}_{d_{\scK}}(X')\ar[dd]_(.3){\kappa'}|\hole\ar[rr]
\ar[rd]_(.45){\chi}^(.65){\rotatebox{-28}{{\hspace{-4.5mm}}$\sim$}}&&\cdots\\
&\cdots\ar[rr]&&H^{q-1}(X';\scS)\ar[rr]&&H^{q}(X,X';\scS) \ar[rr]
&& H^{q}(X;\scS)
\ar[rr]&&H^{q}(X';\scS)\ar[rr]&&\cdots \\
\cdots\ar[rr]&&H^{q-1}_{d_{\scF}}(X')\ar[rr]\ar[ru]_(.5){\chi}^(.55){\rotatebox{28}{{\hspace{-4.5mm}}$\sim$}}&&H^{q}_{d_{\scF}}(i)\ar[ru]_(.5){\chi\circ\rho^{-1}}^(.55){\rotatebox{28}{{\hspace{-4.5mm}}$\sim$}} \ar[rr]&&H^{q}_{d_{\scF}}(X)\ar[rr]\ar[ru]_(.5){\chi}^(.55){\rotatebox{28}{{\hspace{-4.5mm}}$\sim$}}&&H^{q}_{d_{\scF}}(X')\ar[rr]\ar[ru]_(.5){\chi}^(.55){\rotatebox{28}{{\hspace{-4.5mm}}$\sim$}}&&\cdots ,}
\]
where the top and bottom sequences are the ones in \eqref{exactcomemb} for $\scK^{\bullet}$
and $\scF^{\bullet}$, the middle sequence is the one in Proposition \ref{propfl} (3) with $X''=\emptyset$,
the vertical morphisms are the ones induced by $\kappa$ and the $\chi$'s are the ones in Theorem \ref{gnatisos}.
The triangles and the rectangles are commutative.  The parallelograms are commutative except for the one at the
left bottom, which is anti-commutative. By assumption, all the $\chi$'s are \iso s so that $\kappa$ and $\kappa'$ are \iso s. Hence by the five lemma, $\kappa(i)$ is an \iso.
\end{proof}

Later the theorem above is reproved as Theorem \ref {thder} and is generalized as Theorem~\ref{th2}.

\paragraph{Soft sheaves\,:} 
Let  $X$ be a  paracompact topological space, i.e., it is Hausdorff and every open
covering of $X$ admits a locally finite refinement.
A sheaf $\scG$ on $X$ is {\em soft} if  
the restriction $\scG(X)\ra \scG(S)$
is surjective for every closed set $S$ in $X$. 
A flabby sheaf is soft. 
If $\scG$ is soft, then $H^{q}(X;\scG)
=0$ for 
$q\ge 1$.

Suppose 
every open set in $X$ is paracompact. Foe example, this is the case if $X$ is a locally compact Hausdorff space with a countable basis, in particular, a \mfd\ with a countable basis.
In this case, for  a soft sheaf $\scS$ and a locally closed set $A$ in $X$,
the sheaf $\scS|_{A}$ is soft (cf. \cite{G}).

Under the above assumption on $X$, let  $X'$ be   an open set in $X$ and 
$\scG$  a soft sheaf on $X$.
Then
from
Proposition \ref{propfl} (3) with $X''=\emptyset$, we see that $H^{q}(X,X';\scG)=0$ for $q\ge 2$. However 
$H^{1}(X,X';\scG)\ne 0$ in general. In fact we have the exact sequence
\begin{equationth}\label{exfine}
0\lra \scG(X,X')\overset{j^{-1}}\lra \scG(X)\overset{i^{-1}}\lra\scG(X')\overset{\delta}\lra H^{1}(X,X';\scG)\lra 0
\end{equationth}
and $H^{1}(X,X';\scG)$ is the obstruction   to $i^{-1}$ being surjective.
\vv

From Theorem \ref{gnatisos} with $X'=\emptyset$, we have\,:


\begin{theorem}[de~Rham type theorem]\label{thdRtypesoft} Let $X$ be a paracompact topological
space and $\scS$ a sheaf on $X$.
Then, for any soft resolution $0\ra\scS\ra\scK^{\bullet}$ of $\scS$,
there is a canonical \iso\,{\rm : }
\[
H^{q}_{d_{\scK}}(X)\simeq H^{q}(X;\scS).
\]
\end{theorem}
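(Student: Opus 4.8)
The plan is to obtain this as the special case $X'=\emptyset$ of part~2 of Theorem~\ref{gnatisos}. That theorem attaches to every resolution $0\to\scS\to\scK^{\bullet}$ a canonical morphism $\chi_{(X,X')}\colon H^{q}_{d_{\scK}}(X,X')\to H^{q}(X,X';\scS)$, namely $\chi_{(X,X')}=(\psi_{(X,X')})^{-1}\circ\varphi_{(X,X')}$, and asserts that $\chi_{(X,X')}$ is an \iso\ as soon as $H^{q_{2}}(X,X';\scK^{q_{1}})=0$ for all $q_{1}\ge 0$ and $q_{2}\ge 1$. Specializing to $X'=\emptyset$ and using the conventions $H^{q}_{d_{\scK}}(X,\emptyset)=H^{q}_{d_{\scK}}(X)$ and $H^{q}(X,\emptyset;\scS)=H^{q}(X;\scS)$, it therefore remains only to verify the vanishing $H^{q_{2}}(X;\scK^{q_{1}})=0$ for $q_{1}\ge 0$ and $q_{2}\ge 1$.

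First I would invoke softness: since $0\to\scS\to\scK^{\bullet}$ is a soft resolution, each $\scK^{q_{1}}$ is a soft sheaf and $X$ is paracompact, so by the fact recalled above a soft sheaf on a paracompact space has no higher cohomology, i.e.\ $H^{q_{2}}(X;\scK^{q_{1}})=0$ for all $q_{2}\ge 1$. Hence the hypothesis of Theorem~\ref{gnatisos}.\,2 is met with $X'=\emptyset$, and $\chi_{(X,\emptyset)}\colon H^{q}_{d_{\scK}}(X)\to H^{q}(X;\scS)$ is the desired \iso.

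There is essentially no obstacle to overcome here beyond assembling these ingredients; the only point worth spelling out is the canonicity of the \iso. It is the composite $(\psi_{(X,\emptyset)})^{-1}\circ\varphi_{(X,\emptyset)}$, where $\varphi_{(X,\emptyset)}$ is induced by the embedding $\kappa\colon\scK^{\bullet}(X)\hra\scC(\scK)^{\bullet}(X)$ of \eqref{emb2} and $\psi_{(X,\emptyset)}$ by the embedding $\scC^{\bullet}(\scS)(X)\hra\scC(\scK)^{\bullet}(X)$, both of which are quasi-\iso s under the present hypotheses by Propositions~\ref{gfirstiso} and~\ref{gsecondiso} and both of which are constructed without any choices. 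Consequently the \iso\ involves no choices, and it inherits functoriality in $\scS$ and in $X$ from that of the Godement resolution $\scC^{\bullet}(\,\cdot\,)$.
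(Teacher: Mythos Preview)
Your argument is correct and is exactly the paper's own: the theorem is stated immediately after the sentence ``From Theorem~\ref{gnatisos} with $X'=\emptyset$, we have'', so the paper too deduces it by specializing Theorem~\ref{gnatisos}.\,2 to $X'=\emptyset$ and invoking the vanishing of higher cohomology of soft sheaves on a paracompact space.
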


More generally, let $X'$ be an open set in $X$. We say that $(X,X')$ is a {\em paracompact pair} if $X$  and $X'$ are paracompact.  From Theorem \ref{th}, we have\,:

\begin{theorem}[Relative de~Rham type theorem]\label{thdrelsoft} 
 Let   $(X,X')$ be  a paracompact pair and $\scS$ a sheaf on $X$. 
Then,
for any soft resolution $0\ra\scS\ra\scK^{\bullet}$
 \st\ each $\scK^{q}|_{X'}$ is soft, 
there is a canonical \iso\,{\rm : }
\[
H^{q}_{d_{\scK}}(i)\simeq  H^{q}(X,X';\scS).
\]
\end{theorem}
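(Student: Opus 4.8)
The plan is to reduce the statement to Theorem~\ref{th}, whose hypothesis is the vanishing of $H^{q_{2}}(X;\scK^{q_{1}})$ and $H^{q_{2}}(X';\scK^{q_{1}})$ for $q_{1}\ge 0$ and $q_{2}\ge 1$. For the first family of vanishings we invoke the fact, recalled in the ``Soft sheaves'' paragraph, that a soft sheaf on a paracompact space has vanishing higher cohomology: since $X$ is paracompact and each $\scK^{q_{1}}$ is soft, $H^{q_{2}}(X;\scK^{q_{1}})=0$ for $q_{2}\ge 1$. For the second family we use the extra hypothesis that each $\scK^{q_{1}}|_{X'}$ is soft together with the paracompactness of $X'$: applying the same vanishing theorem to the paracompact space $X'$ and the soft sheaf $\scK^{q_{1}}|_{X'}$ gives $H^{q_{2}}(X';\scK^{q_{1}})=H^{q_{2}}(X';\scK^{q_{1}}|_{X'})=0$ for $q_{2}\ge 1$.

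With both vanishing conditions verified, Theorem~\ref{th} applies directly to the resolution $0\ra\scS\ra\scK^{\bullet}$ and yields the canonical isomorphism
\[
H^{q}_{d_{\scK}}(i)\simeq H^{q}(X,X';\scS),
\]
which is exactly the assertion. So at the level of bookkeeping the proof is a two-line deduction; the substantive content has already been packaged into Theorem~\ref{th} (via Proposition~\ref{propexistfl}, Corollary~\ref{corfl}, Proposition~\ref{casflasque}, and the five-lemma diagram chase there), and into the cited softness vanishing theorem $H^{q}(X;\scG)=0$ for $q\ge 1$, $\scG$ soft, $X$ paracompact.

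The only point requiring a word of care is the second vanishing statement, because $H^{q}(X';\scK^{q_{1}})$ is computed using the restricted sheaf $\scK^{q_{1}}|_{X'}$, and one must know that this restriction is soft in order to apply the vanishing theorem on $X'$. This is precisely why the hypothesis ``each $\scK^{q}|_{X'}$ is soft'' is imposed in the statement; it is not automatic (softness does not in general pass to restrictions to arbitrary open subsets unless, e.g., every open subset of $X$ is paracompact, as in the locally compact second countable case discussed earlier). So the main ``obstacle'' is simply to recognize that this hypothesis is exactly what is needed to feed Theorem~\ref{th}; once that is noted, there is nothing further to prove. One may also remark, for orientation, that Theorem~\ref{thdRtypesoft} is the special case $X'=\emptyset$, where the condition on $\scK^{q}|_{X'}$ is vacuous and only the paracompactness of $X$ is used; the present theorem is the honest relative analogue, obtained by additionally controlling the cohomology over $X'$.
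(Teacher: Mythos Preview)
Your proof is correct and is exactly the argument the paper gives: the theorem is stated immediately after the phrase ``From Theorem~\ref{th}, we have'', so the paper's proof is precisely the observation that softness of each $\scK^{q_{1}}$ on paracompact $X$ and of each $\scK^{q_{1}}|_{X'}$ on paracompact $X'$ forces the vanishing hypotheses of Theorem~\ref{th}. Your additional commentary on why the restriction hypothesis is needed is accurate and matches the discussion in the ``Soft sheaves'' paragraph.
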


The above theorem is restated as Theorem \ref{threldRder} and
 is  generalized as 
Theorem \ref{genreldR} below.
An alternative proof is given in Theorem \ref{thdrel} for fine resolutions.

\lsection{\v{C}ech cohomology of sheaf complexes
}\label{seccech}

\subsection{\v{C}ech cohomology of  sheaves}

We briefly recall the usual \v{C}ech cohomology theory for sheaves.

Let $X$ be a topological space, $\scS$ a sheaf on $X$ and $\W=\{W_\a\}_{\a\in I}$ 
an open covering of $X$. 
We set $W_{\a_0\dots\a_q}=W_{\a_0}\cap\dots\cap W_{\a_q}$ and consider  the direct product 
\[
C^{q}(\W; \scS)=\prod_{(\a_0,\dots,\a_{q})\in I^{q+1}}\scS(W_{\a_0\dots\a_{q}}).
\]
The differential $\check{\delta}:C^{q}(\W; \scS)\ra C^{q+1}(\W; \scS)$ is
defined by
\[
(\check{\delta}\sigma)_{\a_0\dots \a_{q+1}}
=\sum_{\nu=0}^{q+1}(-1)^\nu\sigma_{\a_0\dots\widehat{\a_{\nu}}\dots\a_{q+1}}.
\]
Then we have $\check{\delta}\circ\check{\delta}=0$ and
the $q$-th \v{C}ech cohomology $H^{q}(\W;\scS)$ of $\scS$ on $\W$  is the $q$-th cohomology
of the complex $(C^{\bullet}(\W; \scS),\check{\delta})$.

Let  $X'$ be an open set in $X$. Let $\W=\{W_{\a}\}_{\a\in I}$ be a covering 
of $X$ \st\ $\W'=\{W_{\a}\}_{\a\in I'}$ is a covering  of $X'$ for some $I'\subset I$.
In the sequel we refer to such a pair $(\W,\W')$ as a pair of coverings of $(X,X')$.
We set
\[
C^{q}(\W, \W';\scS)
=\{\,\sigma\in C^{q}(\W; \scS)\mid \sigma_{\a_0\dots \a_{q}}=0\ \ \text{if}\ \ 
\a_0,\dots ,\a_{q}\in I'\,\}.
\]
Then the  operator $\check{\delta}$ restricts to 
$C^{q}(\W,  \W';\scS)\to C^{q+1}(\W,  \W';\scS)$. 
The 
\v{C}ech cohomology $H^{q}(\W,\W';\scS)$ of $\scS$ on $(\W,\W')$  is the 
cohomology
of 
$(C^{\bullet}(\W,\W'; \scS),\check{\delta})$. We have the properties (1) - (3) in Proposition \ref{propfl}, replacing
$H^{q}(X,X';\scS)$ by $H^{q}(\W,\W';\scS)$.

\subsection{\v{C}ech cohomology of sheaf complexes}\label{sscech}

Let $(\scK^{\bullet},d_{\scK})$
be a complex of sheaves on a topological space $X$
and
$\W=\{W_\a\}_{\a\in I}$  an open covering of $X$. Also let $X'$
be an open set in $X$ and $\W'$ a subcovering of $\W$ as before. Then
we have a double complex $(C^\bullet(\W,\W';\scK^\bullet),\check{\delta},(-1)^{\bullet}d_{\scK})$
\,:
\begin{equationth}\label{dc}
\SelectTips{cm}{}
\xymatrix@R=.7cm
@C=1.1cm
{
{}& \vdots
\ar[d]^-{\check{\delta}} &\vdots\ar[d]^-{\check{\delta}}& \\
\cdots\ar[r]^-{(-1)^{q_{1}}d} & 
C^{q_{1}}(\W,\W';\scK^{q_{2}})\ar[r]^-{(-1)^{q_{1}}d} \ar[d]^-{\check{\delta}}& C^{q_{1}}(\W,\W';\scK^{q_{2}+1}) \ar[r]^-{(-1)^{q_{1}}d} \ar[d]^-{\check{\delta}}& \cdots\\ 
\cdots\ar[r]^-{(-1)^{q_{1}+1}d} 
& C^{q_{1}+1}(\W,\W';\scK^{q_{2}}) \ar[r]^-{(-1)^{q_{1}+1}d}\ar[d]^-{\check{\delta}}& C^{q_{1}+1}(\W,\W';\scK^{q_{2}+1})\ar[r]^-{(-1)^{q_{1}+1}d}\ar[d]^-{\check{\delta}}&\cdots\\
& \vdots & \vdots&.}
\end{equationth}
We consider the associated single complex 
$(\scK^{\bullet}(\W,\W'), D_{\scK})$.
Thus
\[
\scK^{q}(\W,\W')=\bigoplus_{q_{1}+q_{2}=q}C^{q_{1}}(\W,\W';\scK^{q_{2}}),\qquad D_{\scK}=\check{\delta}+(-1)^{q_{1}}d_{\scK}.
\]


\begin{definition}\label{defhypercohom}
The  \v{C}ech
cohomology 
$H^{q}(\W,\W';\scK^\bullet)$ 
of $\scK^\bullet$ on $(\W,\W')$  is the cohomology of $(\scK^{\bullet}(\W,\W'), D_{\scK})$. 
\end{definition}

In the case $X'=\emptyset$, we take $\emptyset$ as  $I'$ and
denote $H^{q}(\W,\W';\scK^\bullet)$  by $H^{q}(\W;\scK^\bullet)$.

\begin{remark} In the case $\scK^{p}=0$ for $p>0$,
\[
\scK^{q}(\W,\W')=C^{q}(\W,\W';\scK^{0})\quad\text{so that}\quad H^{q}(\W,\W';\scK^\bullet)=H^{q}(\W,\W';\scK^0).
\]
\end{remark}

We describe the differential $D_{\scK}$
a little more in detail. 
A cochain $\xi$ in $\scK^{q}(\W,\W')$
 may be expressed as 
$\xi=(\xi^{q_{1}})_{0\le q_{1}\le q}$ 
with $\xi^{q_{1}}$ in $C^{q_{1}}(\W,\W';\scK^{q-q_{1}})$. In the sequel $\xi^{q_{1}}_{\a_0\dots \a_{q_{1}}}$ will also be written as $\xi_{\a_0\dots \a_{q_{1}}}$.
Then   $D=D_{\scK}:\scK^q(\W,\W')\ra \scK^{q+1}(\W,\W')$ is given by
\begin{equationth}\label{cdrdiff}
(D\xi)^{q_{1}}=\begin{cases}
  d\xi^{0} & q_{1}=0\\
  \check{\delta}\xi^{q_{1}-1}+(-1)^{q_{1}}d\xi^{q_{1}}\qquad &1\le q_{1}\le q\\
  \check{\delta}\xi^{q}& q_{1}=q+1.
\end{cases}
\end{equationth}
In particular, for $q_{1}=0,1$,
\begin{equationth}\label{small}
(D\xi)_{\a_0}=d\xi_{\a_0},\qquad (D\xi)_{\a_0\a_1}
=\xi_{\a_1}-\xi_{\a_0}-d\xi_{\a_0\a_1}.
\end{equationth}
Thus the  condition for $\xi$ being a cocycle is given by

\begin{equationth}\label{CdRcocycle}
\begin{cases}
  d\xi^{0}=0\\
  \check{\delta}\xi^{q_{1}-1}+(-1)^{q_{1}}d\xi^{q_{1}}=0\qquad 1\le q_{1}\le q\\
  \check{\delta}\xi^{q}=0.
\end{cases}
\end{equationth}

We have 
$H^{0}(\W,\W';\scK^{\bullet})=\scS(X,X')$, where $\scS$ is the kernel of $d_{\scK}:\scK^{0}\ra\scK^{1}$.

For a triple $(\W,\W',\W'')$, we have the exact sequence
\[
0\lra \scK^{\bullet}(\W,\W')\lra \scK^{\bullet}(\W,\W'')\lra \scK^{\bullet}(\W',\W'')\lra 0
\]
yielding an exact sequence
\begin{equationth}\label{lexacthyp}
\begin{aligned}
\cdots\lra H^{q-1}(\W',\W'';\scK^{\bullet})\overset{\delta}\lra &H^{q}(\W,\W';\scK^{\bullet})\\
\overset{j^{-1}}\lra &H^{q}(\W,\W'';\scK^{\bullet})\overset{i^{-1}}\lra H^{q}(\W',\W'';\scK^{\bullet})\lra\cdots.
\end{aligned}
\end{equationth}

The inclusion
$\scK^{q}(X,X')\hra C^0(\W,\W';\scK^{q})\subset \scK^{q}(\W,\W')$ is compatible with the differentials
and
induces a morphism
\begin{equationth}\label{firstmor}
\varphi_{(\W,\W')}:
H^{q}_{d_{\scK}}(X,X')\lra H^{q}(\W,\W';\scK^{\bullet}).
\end{equationth}

In the case $I'=\emptyset$, we denote the above morphism by $\varphi_{\W}:
H^{q}_{d_{\scK}}(X)\ra H^{q}(\W;\scK^{\bullet})$.
Here is a special case where 
this  is an \iso\,:

\begin{proposition}\label{propsp} Suppose $W_{\a}=X$ for some $\a\in I$, then 
$\varphi_{\W}$ is an \iso. In fact
the map $\pi:\scK^{\bullet}(\W)\ra\scK^{\bullet}(X)$ 
given by $\xi\mapsto\xi_{\a}$ is a morphism of complexes and induces  the inverse of $\varphi_{\W}$.
\end{proposition}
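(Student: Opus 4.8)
The plan is to show that $\pi$ furnishes a two-sided inverse to $\varphi_{\W}$ on cohomology: one inverse relation will hold already at the level of cochains, and the other will follow from an explicit cochain homotopy. First I would check that $\pi\colon\scK^{\bullet}(\W)\to\scK^{\bullet}(X)$, $\xi\mapsto\xi_{\a}$ (that is, $\xi^{0}_{\a}$, the $\a$-component of the part of $\xi$ in \v{C}ech degree $0$), is a morphism of complexes. Indeed, by \eqref{cdrdiff} the component of $D_{\scK}\xi$ in $C^{0}(\W;\scK^{q+1})$ is just $d\xi^{0}$, whose $\a$-component is $d\xi^{0}_{\a}=d(\pi\xi)$; hence $\pi\circ D_{\scK}=d_{\scK}\circ\pi$. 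Next, since $W_{\a}=X$, the cochain $\varphi_{\W}(s)$ associated with $s\in\scK^{q}(X)$ has $\a$-component $s|_{W_{\a}}=s$, so $\pi\circ\varphi_{\W}=\Id$ already at the cochain level; in particular the composite $H^{q}_{d_{\scK}}(X)\overset{\varphi_{\W}}\to H^{q}(\W;\scK^{\bullet})\overset{\pi}\to H^{q}_{d_{\scK}}(X)$ is the identity.

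The substance is showing that $\varphi_{\W}\circ\pi$ induces the identity on $H^{q}(\W;\scK^{\bullet})$. For this I would introduce the classical ``cone on the vertex $\a$'' operator: on the double complex \eqref{dc} (in the absolute case), define $k\colon C^{q_{1}}(\W;\scK^{q_{2}})\to C^{q_{1}-1}(\W;\scK^{q_{2}})$ by $(k\sigma)_{\b_{0}\dots\b_{q_{1}-1}}=\sigma_{\a\b_{0}\dots\b_{q_{1}-1}}$. This is well defined with no restriction maps \emph{precisely because} $W_{\a}=X$, so that $W_{\a\b_{0}\dots\b_{q_{1}-1}}=W_{\b_{0}\dots\b_{q_{1}-1}}$; this is the one place the hypothesis enters. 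The standard \v{C}ech computation gives $\check{\delta}k+k\check{\delta}=\Id$ on $C^{q_{1}}(\W;\scK^{q_{2}})$ for $q_{1}\ge 1$ and $\check{\delta}k+k\check{\delta}=\Id-\varphi_{\W}\pi$ on $C^{0}(\W;\scK^{q_{2}})$. Since $k$ acts index by index it commutes with $d_{\scK}$, and because $k$ lowers the \v{C}ech degree by one, the twist $(-1)^{q_{1}}$ in $D_{\scK}$ makes the vertical contribution cancel: $k\circ\bigl((-1)^{q_{1}}d_{\scK}\bigr)+\bigl((-1)^{q_{1}}d_{\scK}\bigr)\circ k=0$ on the total complex. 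Assembling, $D_{\scK}k+kD_{\scK}=\Id-\varphi_{\W}\pi$ on $\scK^{\bullet}(\W)$, so $(\varphi_{\W})_{*}\circ\pi_{*}=\Id$ on $H^{q}(\W;\scK^{\bullet})$. Together with the previous paragraph this shows $\varphi_{\W}$ is an isomorphism with inverse induced by $\pi$.

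I expect the only genuinely delicate point to be the sign bookkeeping in the total complex: one must verify that the un-twisted homotopy $k$ still satisfies $D_{\scK}k+kD_{\scK}=\Id-\varphi_{\W}\pi$ for $D_{\scK}=\check{\delta}+(-1)^{q_{1}}d_{\scK}$, which rests on the commutation of $\check{\delta}$ with $d_{\scK}$ (as $d_{\scK}$ is a sheaf morphism) and on the degree shift produced by $k$. A more abstract alternative would be to filter the double complex \eqref{dc} by \v{C}ech degree: computing $\check{\delta}$-cohomology first yields $\scK^{q_{2}}(X)$ in column $q_{1}=0$ and $0$ in columns $q_{1}\ge 1$, so the spectral sequence degenerates onto $H^{q}_{d_{\scK}}(X)$. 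However the homotopy argument is preferable here, since it produces the explicit inverse $\pi$ named in the statement and keeps the isomorphism canonical and trackable, in keeping with the rest of the paper.
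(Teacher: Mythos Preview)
Your proof is correct and uses essentially the same approach as the paper: the homotopy operator $k$ you introduce is precisely the paper's $\eta$ (equation~\eqref{eta}), namely $\eta_{\a_{0}\dots\a_{p}}=\xi_{\a\a_{0}\dots\a_{p}}$. The only difference is packaging: the paper verifies $\xi-\varphi_{\W}\pi\xi=D\eta$ directly for cocycles $\xi$ using the cocycle condition~\eqref{CdRcocycle}, whereas you establish the full chain homotopy $D_{\scK}k+kD_{\scK}=\Id-\varphi_{\W}\pi$ on all cochains, which is slightly cleaner.
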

\begin{proof} By the first identity in \eqref{small}, $\pi$ is a morphism of complexes and induces $\pi:
H^{q}(\W;\scK^{\bullet})\ra H^{q}_{d_{\scK}}(X)$. By definition we have $\pi\circ\varphi_{(\W,\W')}=1$, the
identity. Thus it suffices to show that $\varphi_{\W}\circ\pi=1$. For this, take $\xi\in \scK^{q}(\W)$ with $D\xi=0$. We claim that there exists a cochain $\eta\in \scK^{q-1}(\W)$ \st
\begin{equationth}\label{toprove}
\xi-\xi_{\a}=D\eta,
\end{equationth}
which will prove the proposition. Indeed, let $\eta$ be defined by
\begin{equationth}\label{eta}
\eta_{\a_{0}\dots\a_{p}}=\xi_{\a\a_{0}\dots\a_{p}},\qquad 0\le p\le q-1.
\end{equationth}
Then $(D\eta)_{\a_{0}}=d\eta_{\a_{0}}=d\xi_{\a\a_{0}}=\xi_{\a_{0}}-\xi_{\a}=(\xi-\xi_{\a})_{\a_{0}}$.
For $q_{1}$ with $1\le q_{1}\le q-1$, by \eqref{cdrdiff},
\begin{equationth}\label{Deta}
(D\eta)_{\a_{0}\dots\a_{q_{1}}}=(\check{\delta}\eta)_{\a_{0}\dots\a_{q_{1}}}+(-1)^{q_{1}}d\eta_{\a_{0}\dots\a_{q_{1}}}.
\end{equationth}
The first term in the right is equal to
\[
\sum_{\nu=0}^{q_{1}}(-1)^{\nu}\eta_{\a_{0}\dots\widehat{\a_{\nu}}\dots\a_{q_{1}}}
=\sum_{\nu=0}^{q_{1}}(-1)^{\nu}\xi_{\a\a_{0}\dots\widehat{\a_{\nu}}\dots\a_{q_{1}}}.
\]
By the cocycle condition \eqref{CdRcocycle}, the second term in the right hand side of \eqref{Deta} is equal to
\[
(-1)^{q_{1}}d\xi_{\a\a_{0}\dots\a_{q_{1}}}=-(-1)^{q_{1}+1}d\xi_{\a\a_{0}\dots\a_{q_{1}}}=(\check{\delta}\xi)_{\a\a_{0}\dots\a_{q_{1}}}=\xi_{\a_{0}\dots\a_{q_{1}}}-\sum_{\nu=0}^{q_{1}}(-1)^{\nu}\xi_{\a\a_{0}\dots\widehat{\a_{\nu}}\dots\a_{q_{1}}}.
\]
Thus $(D\eta)_{\a_{0}\dots\a_{q_{1}}}=\xi_{\a_{0}\dots\a_{q_{1}}}=(\xi-\xi_{\a})_{\a_{0}\dots\a_{q_{1}}}$.
Finally, by \eqref{cdrdiff} for $\eta$ and the last identity in \eqref{CdRcocycle}, 
\[
\begin{aligned}
(D\eta)_{\a_{0}\dots\a_{q}}=&(\check{\delta}\eta)_{\a_{0}\dots\a_{q}}=\sum_{\nu=0}^{q}(-1)^{\nu}\eta_{\a_{0}\dots\widehat{\a_{\nu}}\dots\a_{q}}\\
=&\sum_{\nu=0}^{q}(-1)^{\nu}\xi_{\a\a_{0}\dots\widehat{\a_{\nu}}\dots\a_{q}}=\xi_{\a_{0}\dots\a_{q}}
=(\xi-\xi_{\a})_{\a_{0}\dots\a_{q}}.
\end{aligned}
\]
Therefore we have \eqref{toprove} and the proposition.
\end{proof}

Note that, in the situation of the proposition, $\varphi_{(\W,\W')}$ may not be an \iso, as the cochain
$\eta$ defined by \eqref{eta} may not be in $\scK^{q-1}(\W,\W')$.

In general, we have\,:
\begin{proposition}\label{firstiso} Suppose $H^{q_{2}}(\W,\W';\scK^{q_{1}})=0$ for $q_{1}\ge 0$ and $q_{2}\ge 1$. Then $\varphi_{(\W,\W')}$ is an \iso.
\end{proposition}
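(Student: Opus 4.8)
The strategy is the exact analogue of the proof of Proposition~\ref{gfirstiso}, now carried out for the \v{C}ech double complex rather than the Godement one. I would consider the spectral sequence associated with the double complex $C^\bullet(\W,\W';\scK^\bullet)$ of \eqref{dc} obtained by taking cohomology first in the $\check{\delta}$-direction and then in the $d_{\scK}$-direction:
\[
'\hspace{-.6mm}E_2^{q_{1},q_{2}}=H^{q_{1}}_{d}H^{q_{2}}_{\check{\delta}}(C^\bullet(\W,\W';\scK^\bullet))\Longrightarrow H^{q}(\W,\W';\scK^{\bullet}),
\]
where $d$ denotes $d_{\scK}$.

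First I would compute the inner cohomology. By the hypothesis, $H^{q_{2}}_{\check{\delta}}(C^\bullet(\W,\W';\scK^{q_{1}}))=H^{q_{2}}(\W,\W';\scK^{q_{1}})=0$ for $q_{1}\ge 0$ and $q_{2}\ge 1$. For $q_{2}=0$, a $\check{\delta}$-cocycle in $C^{0}(\W,\W';\scK^{q_{1}})$ is a family $(\sigma_{\a})_{\a\in I}$ of sections of $\scK^{q_{1}}$ over the $W_{\a}$ that agree on the overlaps $W_{\a\b}$ and vanish for $\a\in I'$; such a family patches to a unique global section of $\scK^{q_{1}}$ on $X$ that vanishes on $X'=\bigcup_{\a\in I'}W_{\a}$, so $H^{0}_{\check{\delta}}(C^\bullet(\W,\W';\scK^{q_{1}}))=\scK^{q_{1}}(X,X')$. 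Hence the $E_{2}$-page is concentrated in the single row $q_{2}=0$, where $'\hspace{-.6mm}E_2^{q_{1},0}=H^{q_{1}}_{d}(\scK^{\bullet}(X,X'))=H^{q_{1}}_{d_{\scK}}(X,X')$. Being concentrated in one row, the spectral sequence degenerates at $E_{2}$, and I obtain a canonical \iso\ $H^{q}_{d_{\scK}}(X,X')\simeq H^{q}(\W,\W';\scK^{\bullet})$.

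It then remains to identify this \iso\ with $\varphi_{(\W,\W')}$ of \eqref{firstmor}. But $\varphi_{(\W,\W')}$ is induced by the inclusion $\scK^{q}(X,X')\hra C^{0}(\W,\W';\scK^{q})\subset\scK^{q}(\W,\W')$, which is precisely the edge homomorphism of the above spectral sequence; and an edge homomorphism onto a $E_{2}$-page supported in a single row is an \iso. There is no real obstacle here; the only points that deserve a line of care are the identification of $H^{0}_{\check{\delta}}$ with $\scK^{q_{1}}(X,X')$ — that is, that the gluing of local sections over $\W$ is compatible with the vanishing condition on $X'$ — and the verification that the abstract edge map is the concretely defined $\varphi_{(\W,\W')}$. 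One could alternatively dispense with spectral sequences and argue by induction on $q$ directly from the exactness hypothesis, as in the proof of Proposition~\ref{propsp}, but the spectral sequence argument is the most economical and mirrors Proposition~\ref{gfirstiso} verbatim.
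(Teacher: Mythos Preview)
Your proposal is correct and follows essentially the same approach as the paper: the paper also uses the spectral sequence $'\hspace{-.6mm}E_2^{q_{1},q_{2}}=H^{q_{1}}_{d}H^{q_{2}}_{\check{\delta}}(C^\bullet(\W,\W';\scK^\bullet))\Rightarrow H^{q}(\W,\W';\scK^{\bullet})$, invokes the hypothesis to kill the rows $q_{2}\ge 1$, and identifies $H^{0}_{\check{\delta}}$ with $\scK^{q_{1}}(X,X')$. Your write-up is somewhat more explicit about the gluing step and the identification of the edge homomorphism with $\varphi_{(\W,\W')}$, but the argument is the same.
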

\begin{proof} 
We consider one of the spectral sequences associated with
the double complex $C^\bullet(\W, \W';\scK^\bullet)$\,:
\[
'\hspace{-.6mm}E_2^{q_{1},q_{2}}=H^{q_{1}}_dH^{q_{2}}_\delta(C^\bullet(\W, \W';\scK^\bullet))\Longrightarrow H^{q}(\W,\W';\scK^{\bullet}),
\]
where we denote $\check{\delta}$ by $\delta$. We have $H^{q_{2}}_\delta(C^\bullet(\W,\W'; \scK^{q_{1}}))=H^{q_{2}}(\W,\W';\scK^{q_{1}})=0$ for $q_{1}\ge 0$ and $q_{2}\ge 1$, by assumption, while
$H^0_\delta(C^\bullet(\W,\W'; \scK^{q_{1}}))=\scK^{q_{1}}(X,X')$.
\end{proof}

The point is that any cochain in $\scK^{q}(X,X')$ may be thought of as  a cochain in $\scK^{q}(\W,\W')$ and this identification induces an \iso\ between the cohomologies.
A cocycle $s$ in 
$\scK^{q}(X,X')$ and a cocycle $\xi$ in $\scK^{q}(\W,\W')$ determine the same class \iff\ 
there exists 
a $(q-1)$-cochain $\eta$ in $\scK^{q-1}(\W,\W')$ \st
\[
 \xi=s+D\eta.
 \]
 Setting  $\eta^{q}=0$, we may rephrase this as (cf. (\ref{cdrdiff}))
 
\begin{equationth}\label{correspCdRdR}
\begin{cases}
\xi^{0}=s+d\eta^{0},\\
 \xi^{q_{1}}=\check{\delta}\eta^{q_{1}-1}+(-1)^{q_{1}} d\eta^{q_{1}},\qquad 1\le q_{1}\le q.
\end{cases}
 \end{equationth}
 
 Let $\scS$ denote   the kernel of $d_{\scK}:\scK^{0}\ra\scK^{1}$.  Then the inclusion $C^{q}(\W,\W';\scS)\hra C^{q}(\W,\W';\scK^0)\subset \scK^{q}(\W,\W')$ is compatible with the differentials and
induces a morphism
\begin{equationth}\label{secondmor}
\psi_{(\W,\W')}:H^{q}(\W,\W';\scS)\lra H^{q}(\W,\W';\scK^{\bullet}).
\end{equationth}

\begin{proposition}\label{secondiso} Suppose
$H^{q_{2}}(\scK^{\bullet}(W_{\a_0\dots\a_{q_{1}}}))=0$
for $q_{1}\ge 0$ and $q_{2}\ge 1$. Then $\psi_{(\W,\W')}$ is an \iso.
\end{proposition}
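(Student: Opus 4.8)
The plan is to reuse the spectral sequence argument of Proposition~\ref{gsecondiso}, now applied to the double complex $C^\bullet(\W,\W';\scK^\bullet)$ of \eqref{dc}. Concretely, I would take the spectral sequence in which one first passes to cohomology along $d=d_{\scK}$ and then along $\delta=\check{\delta}$:
\[
''\hspace{-.6mm}E_2^{q_{1},q_{2}}=H^{q_{1}}_\delta H^{q_{2}}_d(C^\bullet(\W,\W';\scK^\bullet))\Longrightarrow H^{q}(\W,\W';\scK^{\bullet}).
\]
This converges because the double complex lies in the first quadrant ($\scK^{q}=0$ for $q<0$), so no convergence issue arises even though the cochain groups are infinite products.

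First I would identify the $d$-cohomology of a fixed column. For fixed $q_{1}$, the complex $C^{q_{1}}(\W,\W';\scK^{\bullet})$ (with $\scK^{\bullet}$ varying) is the subgroup of $\prod_{(\a_0,\dots,\a_{q_1})\in I^{q_1+1}}\scK^{\bullet}(W_{\a_0\dots\a_{q_1}})$ of families that vanish on the multi-indices with all entries in $I'$; equivalently it is the direct product of the complexes of sections $(\scK^{\bullet}(W_{\a_0\dots\a_{q_1}}),d)$ taken over the multi-indices $(\a_0,\dots,\a_{q_1})$ not entirely contained in $I'$. Since direct products are exact in the category of Abelian groups, cohomology commutes with them, so $H^{q_{2}}_d(C^{q_{1}}(\W,\W';\scK^{\bullet}))$ is the corresponding primed product of the groups $H^{q_{2}}(\scK^{\bullet}(W_{\a_0\dots\a_{q_1}}))$. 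By hypothesis this vanishes for $q_{2}\ge 1$; and for $q_{2}=0$, left-exactness of the section functor together with $\scS=\Ker(d:\scK^{0}\ra\scK^{1})$ gives $H^{0}(\scK^{\bullet}(W_{\a_0\dots\a_{q_1}}))=\scS(W_{\a_0\dots\a_{q_1}})$, whence $H^{0}_d(C^{q_{1}}(\W,\W';\scK^{\bullet}))=C^{q_{1}}(\W,\W';\scS)$ with the differential induced by $\delta$.

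Consequently $''E_2^{q_1,q_2}=0$ for $q_2\ge 1$ and $''E_2^{q_1,0}=H^{q_1}_\delta(C^{\bullet}(\W,\W';\scS))=H^{q_1}(\W,\W';\scS)$, so the spectral sequence degenerates at $E_2$ and yields an \iso\ $H^{q}(\W,\W';\scS)\overset\sim\ra H^{q}(\W,\W';\scK^{\bullet})$. The last step is to check that this \iso\ is precisely $\psi_{(\W,\W')}$: the edge homomorphism of this spectral sequence is the map induced by the inclusion of the bottom row $C^{\bullet}(\W,\W';\scS)\hra\scK^{\bullet}(\W,\W')$, which is exactly the morphism defining $\psi_{(\W,\W')}$ in \eqref{secondmor}. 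I expect the only mildly delicate point — routine but worth spelling out — to be the interchange of $d$-cohomology with the infinite direct product defining $C^{q_{1}}(\W,\W';\scK^{\bullet})$, together with the identification of its degree-$0$ part with the subcomplex $C^{q_{1}}(\W,\W';\scS)$ on the nose; both are immediate from exactness of products of Abelian groups and left-exactness of the functor $\scK^{\bullet}\mapsto\scK^{\bullet}(W_{\a_0\dots\a_{q_1}})$. Note that no paracompactness or similar hypothesis enters: the statement is pure homological algebra once the vanishing assumption is granted.
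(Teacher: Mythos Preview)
Your proof is correct and follows the same spectral-sequence approach as the paper. Your observation that $C^{q_{1}}(\W,\W';\scK^{\bullet})$ is already a direct product of the complexes $\scK^{\bullet}(W_{\a_0\dots\a_{q_1}})$ over multi-indices not entirely in $I'$ makes the column-exactness verification immediate; the paper instead carries out an explicit cochain-modification argument (lifting to $C^{q_{1}}(\W;\scK^{q_{2}-1})$ and correcting on the $I'$-components) which amounts to the same thing but is slightly more laborious.
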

\begin{proof} 
We consider the other spectral sequence associated with
the double complex $C^\bullet(\W, \W';\scK^\bullet)$\,: 
\[
''\hspace{-.6mm}E_2^{q_{1},q_{2}}=H^{q_{1}}_\delta H^{q_{2}}_d(C^\bullet(\W,\W'; \scK^\bullet))\Longrightarrow H^{q}(\W,\W';\scK^{\bullet}),
\]
where we denote $\check{\delta}$ by $\delta$.
We claim that the sequence
\begin{equationth}\label{toshow}
0\lra C^{q_{1}}(\W,\W'; \scS)\overset\iota\lra C^{q_{1}}(\W,\W'; \scK^{0})\overset{d}\lra C^{q_{1}}(\W,\W'; \scK^{1})\overset{d}\lra\cdots
\end{equationth}
is exact for $q_{1}\ge 0$, which would imply the proposition. For this, note that the assumption
implies that the following sequence is exact\,:
\[
0\lra\scS(W_{\a_0\dots\a_{q_{1}}})\overset\iota\lra \scK^{0}(W_{\a_0\dots\a_{q_{1}}}) \overset{d}\lra \scK^{1}(W_{\a_0\dots\a_{q_{1}}})\overset{d}\lra\cdots.
\]
From this we see that \eqref{toshow} is exact up to the term $C^{q_{1}}(\W,\W'; \scK^{1})$. Let $q_{2}\ge 1$ and 
take $\xi\in C^{q_{1}}(\W,\W'; \scK^{q_{2}})$ with $d\xi=0$. Then there exists 
$\eta\in C^{q_{1}}(\W; \scK^{q_{2}-1})$ \st\ $\xi=d\eta$. 
If $\a_{0},\dots,\a_{q_{1}}$ are in $I'$, $d\eta_{\a_{0}\dots\a_{q_{1}}}=\xi_{\a_{0}\dots\a_{q_{1}}}=0$.
Thus there exists $\chi\in C^{q_{1}}(\W';\scK^{q_{2}-2})$ \st\ $\eta=d\chi$,
where we set $\scK^{-1}=\scS$ and $d^{-1}=\iota$. By setting $\chi_{\a_{0}\dots\a_{q_{1}}}=0$, if $\a_{\nu}\in I\ssm I'$ for some $\nu\in\{0,\dots,q_{1}\}$, we may think of $\chi$ as a cochain in  
$C^{q_{1}}(\W;\scK^{q_{2}-2})$. If we set $\eta'=\eta-d\chi$, it is in $C^{q_{1}}(\W,\W;\scK^{q_{2}-1})$
and $\xi=d\eta'$. Hence \eqref{toshow} is exact.
\end{proof}

A cochain  in $C^{q}(\W,\W';\scS)$ may be thought of as a cochain in $\scK^{q}(\W,\W')$ and a cocycle $\sigma$ in 
$C^{q}(\W,\W';\scS)$ and a cocycle $\xi$ in $\scK^{q}(\W,\W')$ determine the same class \iff\ 
there exists 
a $(q-1)$-cochain $\z$ in $\scK^{q-1}(\W,\W')$ \st
\[
\xi=\sigma+D\z.
\]
Setting $\z^{-1}=0$, we may rephrase this as 
\begin{equationth}\label{correspCdRC}
\begin{cases}
\xi^{q_{1}}=\check{\delta}\z^{q_{1}-1}+(-1)^{q_{1}} d\z^{q_{1}},\qquad 0\le q_{1}\le q-1,\\
\xi^{q}=\sigma+\check{\delta}\z^{q-1}.
\end{cases}
\end{equationth}




From Propositions \ref{firstiso} and \ref{secondiso} we have\,:

\begin{theorem}\label{natisos}
Let $(\scK^{\bullet},d_{\scK})$ be a complex of sheaves on $X$ and let $\scS$ be the
kernel of $d_{\scK}:\scK^{0}\ra\scK^{1}$. Suppose $H^{q_{2}}(\W,\W';\scK^{q_{1}})=0$ and 
$H^{q_{2}}(\scK^{\bullet}(W_{\a_0\dots\a_{q_{1}}}))=0$, for $q_{1}\ge 0$ and $q_{2}\ge 1$.
Then there is a canonical \iso\,{\rm : }
\[
H^{q}_{d_{\scK}}(X,X')\simeq
H^{q}(\W,\W';\scS).
\]
\end{theorem}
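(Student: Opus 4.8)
The plan is to combine Propositions~\ref{firstiso} and~\ref{secondiso}, exactly parallel to the way Theorem~\ref{gnatisos} was deduced from Propositions~\ref{gfirstiso} and~\ref{gsecondiso} in the Godement setting. Under the two stated vanishing hypotheses, Proposition~\ref{firstiso} (applicable because $H^{q_{2}}(\W,\W';\scK^{q_{1}})=0$ for $q_{1}\ge 0$, $q_{2}\ge 1$) says that the morphism $\varphi_{(\W,\W')}:H^{q}_{d_{\scK}}(X,X')\ra H^{q}(\W,\W';\scK^{\bullet})$ of \eqref{firstmor} is an \iso, and Proposition~\ref{secondiso} (applicable because $H^{q_{2}}(\scK^{\bullet}(W_{\a_0\dots\a_{q_{1}}}))=0$ for $q_{1}\ge 0$, $q_{2}\ge 1$, which in particular forces $0\ra\scS\ra\scK^{\bullet}$ to be a resolution of $\scS=\Ker(d_{\scK}\colon\scK^{0}\to\scK^{1})$ locally on every finite intersection) says that $\psi_{(\W,\W')}:H^{q}(\W,\W';\scS)\ra H^{q}(\W,\W';\scK^{\bullet})$ of \eqref{secondmor} is an \iso. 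Composing, the canonical \iso\ is
\[
\chi_{(\W,\W')}:=(\psi_{(\W,\W')})^{-1}\circ\varphi_{(\W,\W')}:H^{q}_{d_{\scK}}(X,X')\overset\sim\lra H^{q}(\W,\W';\scS).
\]

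The main steps, in order, are: first observe that the hypothesis on the intersections $W_{\a_0\dots\a_{q_1}}$ in particular yields exactness of $0\ra\scS\ra\scK^{\bullet}$ as a complex of sheaves (take $q_{1}=0$ so that $W_{\a_0}$ ranges over the open sets of $\W$, then pass to stalks, or simply note that exactness at the sheaf level is local), so that $\scS$ really is the cohomology sheaf in degree $0$ and the statement is about resolving it; second, invoke Proposition~\ref{firstiso} to get $\varphi_{(\W,\W')}$ an \iso; third, invoke Proposition~\ref{secondiso} to get $\psi_{(\W,\W')}$ an \iso; fourth, set $\chi_{(\W,\W')}=(\psi_{(\W,\W')})^{-1}\circ\varphi_{(\W,\W')}$ and note canonicity, since both constituent morphisms are induced by natural inclusions of complexes ($\scK^{\bullet}(X,X')\hra\scK^{\bullet}(\W,\W')$ and $C^{\bullet}(\W,\W';\scS)\hra\scK^{\bullet}(\W,\W')$) compatible with the differentials.

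There is essentially no obstacle: both spectral-sequence arguments have already been carried out in Propositions~\ref{firstiso} and~\ref{secondiso}, so the proof is a two-line composition. The only point worth a remark is to make explicit, via \eqref{correspCdRdR} and \eqref{correspCdRC}, the concrete correspondence the \iso\ encodes — namely, a cocycle $s\in\scK^{q}(X,X')$ and a \v{C}ech cocycle $\sigma\in C^{q}(\W,\W';\scS)$ correspond under $\chi_{(\W,\W')}$ \iff\ they define the same class in $H^{q}(\W,\W';\scK^{\bullet})$, i.e.\ $s-\sigma=D_{\scK}\eta$ for some $\eta\in\scK^{q-1}(\W,\W')$ — which is the practically useful content and is the analogue of the remark following Theorem~\ref{gnatisos}. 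One should also record that this \iso\ is independent of all auxiliary choices to the extent the hypotheses permit, but that requires nothing beyond functoriality of the two inclusion-induced morphisms in the pair of coverings.
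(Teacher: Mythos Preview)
Your proposal is correct and follows exactly the paper's approach: the theorem is stated immediately after Propositions~\ref{firstiso} and~\ref{secondiso} with the preamble ``From Propositions~\ref{firstiso} and~\ref{secondiso} we have,'' and the isomorphism is the composition $(\psi_{(\W,\W')})^{-1}\circ\varphi_{(\W,\W')}$, with the explicit cocycle-level correspondence $s-\sigma=D_{\scK}\chi$ spelled out in the discussion following the statement. One small remark: your ``first step'' claiming that the hypothesis forces sheaf-level exactness of $0\ra\scS\ra\scK^{\bullet}$ is neither needed nor quite justified (the covering $\W$ need not be fine enough to test stalks), but since Proposition~\ref{secondiso} applies directly under the stated section-level hypothesis, this digression is harmless.
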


In the above, we think of a  cocycle $s$ in $\scK^{q}(X,X')$  and a cocycle $\sigma$ in $C^{q}(\W,\W';\scS)$  as cocycles in $\scK^{q}(\W,\W')$.
The classes $[s]$ and $[\sigma]$ correspond in the above \iso, \iff\ $s$ and $\sigma$ define the same class in 
$H^{q}_{D_{\scK}}(\W,\W')$, i.e., there exists a $(q-1)$-cochain $\chi$ in $\scK^{q-1}(\W,\W')$
 \st
\[
s-\sigma=D\chi.
\]
Such a $\chi$ is given by $\chi=\z-\eta$ with $\eta$ and $\z$ as in (\ref{correspCdRdR}) and (\ref{correspCdRC}). The above relation is rephrased as, for $\chi^{q_{1}}$ in $C^{q_{1}}(\W,\W';\scK^{q-q_{1}-1})$, $0\le q_{1}\le q-1$,
\begin{equationth}\label{dRCcorr}
\begin{cases}
s=d\chi^{0},\\
0= \check{\delta}\chi^{q_{1}-1}+(-1)^{q_{1}}d\chi^{q_{1}},\qquad 1\le q_{1}\le q-1\\
 -\sigma=\check{\delta}\chi^{q-1}.
\end{cases}
\end{equationth}

The above correspondence may be illustrated in the following diagram. For simplicity, we consider the absolute case ($\W'=\emptyset$),
the relative case being similar. We also denote $\check{\delta}$ by $\delta$\,:

\begin{equationth}\label{sqdiag}
\SelectTips{cm}{}
\xymatrix@R=.7cm
@C=.7cm
{
{}& \scK^{0}(X)\ar[r]^-{d^0}
\ar@{_{(}-{>}}[d]&\scK^{1}(X) \ar[r]^-{d^1} \ar@{_{(}-{>}}[d]&\cdots\ar[r]^-{d^{q-1}} & 
\overset{\overset{\hspace{-.03cm}\scalebox{.96}{$s$}}{\rotatebox{-90}{$\in$}}}{\scK^{q}(X)}\ar[r]^-{d^q}\ar@{_{(}-{>}}[d]&\cdots\\
C^0(\W;\scS) 
\ar@{^{(}->}[r]
\ar[d]^-{\delta^0}& C^0(\W;\scK^0)\ar[r]^-{d^0} \ar[d]^-{\delta^0}& C^0(\W;\scK^1) \ar[r]^-{d^1}\ar[d]^-{\delta^0}& \cdots\ar[r]^-{d^{q-1}} & \ar[r]C^0(\W;\scK^q)\ar[r]^-{d^q}\ar[d]^-{\delta^0}& \cdots\\
C^1(\W;\scS)  \ar@{^{(}->}[r] \ar[d]^-{\delta^1} & C^1(\W;\scK^0) \ar[r]^-{-d^0}\ar[d]^-{\delta^1}& C^1(\W;\scK^1)\ar[r]^-{-d^1} \ar[d]^-{\delta^1}&\cdots\ar[r]^-{-d^{q-1}}& 
C^1(\W;\scK^q) \ar[r]^-{-d^q}\ar[d]^-{\delta^1}&\cdots\\
\vdots \ar[d]^-{\delta^{q-1}} & \vdots \ar[d]^-{\delta^{q-1}}& \vdots \ar[d]^-{\delta^{q-1}}& {}&\vdots\\
\sigma\in 
C^q(\W;\scS)  \ar@{^{(}->}[r] \ar[d]^-{\delta^q} & C^q(\W;\scK^0) \ar[r]^-{(-1)^{q}d^0}\ar[d]^-{\delta^q}& C^q(\W;\scK^1)\ar[r]^-{(-1)^{q}d^1} \ar[d]^-{\delta^q}& \cdots\\
\vdots &\vdots &\vdots& &&.}
\end{equationth}

If we let $\scK^{\bullet}=\scC^{\bullet}(\scS)$, the canonical resolution of $\scS$, in Theorem \ref{natisos},
noting that $H^{q_{2}}(\scC^{\bullet}(\scS)(W_{\a_0\dots\a_{q_{1}}}))=H^{q_{2}}(W_{\a_0\dots\a_{q_{1}}};\scS)$ we have\,:
\begin{corollary}[Relative Leray theorem]\label{thleray}
If $H^{q_{2}}(W_{\a_0\dots\a_{q_{1}}};\scS)=0$
 for $q_{1}\ge 0$ and $q_{2}\ge 1$, there is a canonical \iso
\[
H^{q}(\W,\W';\scS)\simeq H^{q}(X,X';\scS).
\]
\end{corollary}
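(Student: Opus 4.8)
The plan is to deduce the Relative Leray theorem from Theorem~\ref{natisos} by choosing $\scK^{\bullet}$ to be the canonical (Godement) resolution $\scC^{\bullet}(\scS)$ of $\scS$. First I would observe that $\scC^{\bullet}(\scS)$ is indeed a resolution of $\scS$, so that $\scS$ is the kernel of $d_{\scK}\colon\scC^{0}(\scS)\ra\scC^{1}(\scS)$, and that $H^{q}_{d_{\scK}}(X,X')$ in this case is by definition (cf. Corollary~\ref{corfl}, applied to $(X,X')$, noting $\scC^{\bullet}(\scS)$ is flabby) canonically isomorphic with $H^{q}(X,X';\scS)$. So the right-hand side $H^{q}(X,X';\scS)$ of the asserted isomorphism is identified with $H^{q}_{d_{\scK}}(X,X')$ for $\scK^{\bullet}=\scC^{\bullet}(\scS)$.

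Next I would verify the two vanishing hypotheses of Theorem~\ref{natisos} for this choice of $\scK^{\bullet}$. The hypothesis $H^{q_{2}}(\scK^{\bullet}(W_{\a_{0}\dots\a_{q_{1}}}))=0$ for $q_{2}\ge 1$ becomes, as indicated in the sentence preceding the corollary, the statement $H^{q_{2}}(W_{\a_{0}\dots\a_{q_{1}}};\scS)=0$ for $q_{2}\ge 1$, since the cohomology of the complex $(\scC^{\bullet}(\scS)(W_{\a_{0}\dots\a_{q_{1}}}),d)$ computes $H^{q_{2}}(W_{\a_{0}\dots\a_{q_{1}}};\scS)$ by definition; this is exactly the assumption of the corollary. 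For the other hypothesis, $H^{q_{2}}(\W,\W';\scK^{q_{1}})=0$ for $q_{1}\ge 0$, $q_{2}\ge 1$, I would note that each $\scK^{q_{1}}=\scC^{q_{1}}(\scS)$ is flabby, hence soft, hence acyclic on every open set, so $H^{q_{2}}(W_{\a_{0}\dots\a_{q_{1}}};\scK^{q_{1}})=0$ for $q_{2}\ge 1$; this in turn forces the \v{C}ech cohomology $H^{q_{2}}(\W,\W';\scK^{q_{1}})$ to vanish for $q_{2}\ge 1$, via the standard fact that the \v{C}ech cohomology of a flabby (or even just locally acyclic) sheaf vanishes in positive degrees, and its relative version obtained from the long exact sequence \eqref{lexacthyp} (with $\scK^{\bullet}$ a single sheaf) applied to the triple $(\W,\W',\emptyset)$, together with $H^{q_{2}}(\W;\scC^{q_{1}}(\scS))=0$ and $H^{q_{2}}(\W';\scC^{q_{1}}(\scS))=0$ for $q_{2}\ge 1$.

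With both hypotheses checked, Theorem~\ref{natisos} yields a canonical isomorphism $H^{q}_{d_{\scK}}(X,X')\simeq H^{q}(\W,\W';\scS)$, and combining this with the identification $H^{q}_{d_{\scK}}(X,X')\simeq H^{q}(X,X';\scS)$ from the first paragraph gives the desired canonical isomorphism $H^{q}(\W,\W';\scS)\simeq H^{q}(X,X';\scS)$.

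I expect the only real subtlety to be the justification that flabbiness of $\scC^{q_{1}}(\scS)$ implies vanishing of its relative \v{C}ech cohomology $H^{q_{2}}(\W,\W';\scC^{q_{1}}(\scS))$ in positive degrees; this is the standard ``a flabby sheaf is \v{C}ech-acyclic on every covering'' argument (one builds contracting homotopies using the flabby extensions of sections across the nerve), plus the bookkeeping to pass from the absolute statements on $X$ and $X'$ to the relative one via \eqref{lexacthyp}. Everything else is a direct specialization of results already established in the excerpt, so the proof should be quite short once these acyclicity facts are invoked.
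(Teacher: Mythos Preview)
Your proposal is correct and follows exactly the paper's approach: specialize Theorem~\ref{natisos} to $\scK^{\bullet}=\scC^{\bullet}(\scS)$, so that $H^{q}_{d_{\scK}}(X,X')=H^{q}(X,X';\scS)$ by definition and the hypothesis $H^{q_{2}}(\scK^{\bullet}(W_{\a_{0}\dots\a_{q_{1}}}))=0$ becomes the assumed $H^{q_{2}}(W_{\a_{0}\dots\a_{q_{1}}};\scS)=0$. One small simplification: the paper has already recorded, right after introducing $H^{q}(\W,\W';\scS)$, that the analogue of Proposition~\ref{propfl}\,(2) holds for \v{C}ech cohomology, so $H^{q_{2}}(\W,\W';\scC^{q_{1}}(\scS))=0$ for $q_{2}\ge 1$ is immediate and your long-exact-sequence detour (which, as written, would also need the surjectivity of $\scF(X)\ra\scF(X')$ from flabbiness to handle $q_{2}=1$) is unnecessary.
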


The following proposition, which shows the functoriality of various cohomologies appeared
in the above, is not difficult to see\,:

\begin{proposition}\label{propfunc} Let $f:(\scK^{\bullet},d_{\scK})\ra (\scL^{\bullet},d_{\scL})$ be a morphism
of complexes of sheaves on $X$ and denote by  $\scS$ and $\scT$ the kernels of $d_{\scK}:\scK^{0}\ra\scK^{1}$ 
and $d_{\scL}:\scL^{0}\ra\scL^{1}$, \r. Then the morphism $f$ induces  morphisms 
\[
\begin{aligned}
&H^{q}_{d_{\scK}}(X,X')\lra  H^{q}_{d_{\scL}}(X,X'),\quad H^{q}(\W,\W';\scS)\lra  H^{q}(\W,\W';\scT)\quad \text{and}\\ & H^{q}(\W,\W';\scK^{\bullet})\lra H^{q}(\W,\W';\scL^{\bullet})
\end{aligned}
\]
that are compatible with  \eqref{firstmor} and \eqref{secondmor}.
\end{proposition}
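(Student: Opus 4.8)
The plan is to verify, one cohomology at a time, that $f$ induces a chain map on the relevant underlying complex, and then that the two structural morphisms $\varphi_{(\W,\W')}$ and $\psi_{(\W,\W')}$ are natural with respect to these chain maps. All of this is bookkeeping; the only points that need a word of justification are that $f$, being a morphism of sheaves, is compatible with restrictions to open sets and therefore preserves the subgroups cut out by the various vanishing conditions, and that $f$, being a morphism of complexes, commutes with the differentials and hence carries kernels to kernels.

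First I would treat $H^{q}_{d_{\scK}}(X,X')$. Since $f$ is a sheaf morphism it induces $f:\scK^{q}(X)\ra\scL^{q}(X)$ compatible with restriction to $X'$, hence it maps the subgroup $\scK^{q}(X,X')$ of sections vanishing on $X'$ into $\scL^{q}(X,X')$; because $f$ commutes with $d$, the resulting maps form a morphism of complexes $\scK^{\bullet}(X,X')\ra\scL^{\bullet}(X,X')$, and passing to cohomology gives the first arrow. Next, since $f\circ d_{\scK}=d_{\scL}\circ f$, the component $f^{0}:\scK^{0}\ra\scL^{0}$ restricts to a morphism $\scS\ra\scT$ (again denoted $f$), because it carries $\Ker(d_{\scK}:\scK^{0}\ra\scK^{1})$ into $\Ker(d_{\scL}:\scL^{0}\ra\scL^{1})$. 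Applying $f$ to sections over each $W_{\a_{0}\dots\a_{q_{1}}}$ and using compatibility with restrictions gives $C^{q}(\W,\W';\scS)\ra C^{q}(\W,\W';\scT)$ commuting with $\check{\delta}$ (the vanishing condition on indices in $I'$ is preserved since $f$ is a homomorphism), hence $H^{q}(\W,\W';\scS)\ra H^{q}(\W,\W';\scT)$. The same argument applied in each degree $\scK^{q_{2}}$ gives a morphism of double complexes $C^{\bullet}(\W,\W';\scK^{\bullet})\ra C^{\bullet}(\W,\W';\scL^{\bullet})$ --- it commutes with $\check{\delta}$ because $f$ is compatible with restrictions and with $(-1)^{q_{1}}d$ because $f$ is a morphism of complexes --- hence a morphism of the associated single complexes, yielding $H^{q}(\W,\W';\scK^{\bullet})\ra H^{q}(\W,\W';\scL^{\bullet})$.

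Finally, for the compatibility statements, recall that $\varphi_{(\W,\W')}$ is induced by the inclusion $\scK^{q}(X,X')\hra C^{0}(\W,\W';\scK^{q})\subset\scK^{q}(\W,\W')$ and $\psi_{(\W,\W')}$ by $C^{q}(\W,\W';\scS)\hra C^{q}(\W,\W';\scK^{0})\subset\scK^{q}(\W,\W')$. In each case the square relating the $\scK$-side inclusion, the $\scL$-side inclusion and the two vertical maps induced by $f$ commutes already at the cochain level: applying $f$ then including equals including then applying $f$, since $f$ is compatible with restriction to $W_{\a_{0}\dots\a_{q_{1}}}$ and, for $\psi$, the map $\scS\ra\scT$ is literally the restriction of $f^{0}:\scK^{0}\ra\scL^{0}$. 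Passing to cohomology gives the asserted compatibility with \eqref{firstmor} and \eqref{secondmor}. I do not anticipate a genuine obstacle here; the one thing to be careful about is to check that the vanishing and kernel conditions are preserved \emph{before} asserting that $f$ descends, which is precisely where the two hypotheses on $f$ (morphism of sheaves, morphism of complexes) are each used.
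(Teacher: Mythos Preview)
Your proposal is correct and is exactly the routine verification the paper has in mind; the paper itself does not give a proof but simply introduces the proposition with the remark that it ``is not difficult to see,'' so you have faithfully supplied the omitted details.
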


\begin{remark}\label{remalt}  We may use only ``alternating cochains'' in the above construction and the resulting
cohomology is canonically isomorphic with the one defined above, as in the usual \v Cech theory.
\end{remark}

In the sequel, we denote $H^{q}(\W,\W';\scK^\bullet)$  also  by $H^{q}_{D_{\scK}}(\W,\W')$
and $H^{q}(\W;\scK^\bullet)$  by $H^{q}_{D_{\scK}}(\W)$.

\paragraph{Some special cases\,:} {\bf I.}
 In the case $\W=\{X\}$, we have $(\scK^{\bullet}(\W),D_{\scK})=(\scK^{\bullet}(X),d_{\scK})$ and
\[
H^{q}_{D_{\scK}}(\W)=H^{q}_{d_{\scK}}(X).
\]
\vv

\noindent
{\bf II.} In the case $\W$ consists of two open sets $W_{0}$ and $W_{1}$, we may write (cf. Remark \ref{remalt})
\[
\scK^{q}(\W)=C^{0}(\W,\scK^{q})\oplus C^{1}(\W,\scK^{q-1})=\scK^{q}(W_{0})\oplus  \scK^{q}(W_{1})\oplus \scK^{q-1}(W_{01}).
\]
Thus a cochain $\xi\in \scK^{q}(\W)$ is expressed as a triple $\xi=(\xi_{0},\xi_{1},\xi_{01})$
and the differential 
\[
D:\scK^{q}(\W)\ra \scK^{q+1}(\W)\quad\text{is given by}\quad 
D(\xi_{0},\xi_{1},\xi_{01})=(d\xi_{0},d\xi_{1},\xi_{1}-\xi_{0}-d\xi_{01})
\]
(cf. (\ref{small})).
If we set $\W'=\{W_{0}\}$,
\[
\scK^{q}(\W,\W')=\{\,\xi\in \scK^{q}(\W)\mid \xi_{0}=0\,\}= \scK^{q}(W_{1})\oplus \scK^{q-1}(W_{01}).
\]
Thus a cochain $\xi \in \scK^{q}(\W,\W')$ is expressed as a pair $\xi=(\xi_{1},\xi_{01})$ and the differential
\[
D:\scK^{q}(\W,\W')\ra \scK^{q+1}(\W,\W')\quad\text{is given by}\quad D(\xi_{1},\xi_{01})=(d\xi_{1},\xi_{1}-d\xi_{01}). 
\]
The $q$-th cohomology of $(\scK^{\bullet}(\W,\W'),D)$ is $H^{q}_{D_{\scK}}(\W,\W')$.

If we set $\W''=\emptyset$, then 
$H^{q-1}_{D_{\scK}}(\W',\W'')=H^{q-1}_{D_{\scK}}(\W')=H^{q-1}_{d_{\scK}}(W_{0})$
and the connecting morphism $\delta$ in 
\eqref{lexacthyp} assigns to the class of a $(q-1)$-cocycle
$\xi_{0}$ on $W_{0}$ the class of $(0,-\xi_{0})$ (restricted to $W_{1}$) in $H^{q}_{D_{\scK}}(\W,\W')$.

We discuss this case more in detail in the subsequent sections.


\vv

\noindent
{\bf III.} Suppose $\W$ consists of three open sets $W_{0}$, $W_{1}$ and $W_{2}$ and set
$\W'=\{W_{0},W_{1}\}$ and $\W''=\{W_{0}\}$. Then
\[
\begin{aligned}
\scK^{q}(\W)&=\textstyle\bigoplus_{i=0}^{2}\scK^{q}(W_{i})\oplus\textstyle\bigoplus_{0\le i<j\le 2}
\scK^{q-1}(W_{ij})\oplus \scK^{q-2}(W_{012}),\\
\scK^{q}(\W,\W'')&=\textstyle\bigoplus_{i=1}^{2}\scK^{q}(W_{i})\oplus\textstyle\bigoplus_{0\le i<j\le 2}
\scK^{q-1}(W_{ij})\oplus \scK^{q-2}(W_{012}),\\
\scK^{q}(\W,\W')&=\scK^{q}(W_{2})\oplus
\scK^{q-1}(W_{02})\oplus
\scK^{q-1}(W_{12})\oplus \scK^{q-2}(W_{012}),\\
\scK^{q}(\W',\W'')&=\scK^{q}(W_{1})\oplus \scK^{q-1}(W_{01}).
\end{aligned}
\]
The  morphism $\delta$ in 
\eqref{lexacthyp} assigns to the class of 
$(\t_{1},\t_{01})$ in $H^{q-1}_{D_{\scK}}(\W',\W'')$ the class of $(0,0,-\t_{1},\t_{01})$ (restricted to $W_{2}$) in $H^{q}_{D_{\scK}}(\W,\W')$.

\begin{remark}\label{remcano}
{\bf 1.}  It is possible to establish an \iso\
as in Theorem \ref{natisos} 
without introducing the \v{C}ech cohomology of sheaf complexes,
using the so-called Weil lemma instead (cf. \cite[Lemma 5.2.7]{Ka}). The latter amounts to performing the ``ladder diagram chasing'' in (\ref{sqdiag}) with all the horizontal differentials with positive sign to find a correspondence.
However this correspondence is different from the one in Theorem \ref{natisos}, the difference being the sign of $(-1)^{\frac{q(q+1)}2}$.

Incidentally, if we perform the ladder diagram chasing in (\ref{sqdiag}) with the sign of $d$ as it is, we get another correspondence.
However, this correspondence is again different form the one in Theorem \ref{natisos}, the difference being this time  the sign of $(-1)^{q}$.
\smallskip

\noindent
{\bf 2.} We could as well consider the  complex 
$(\scK^{\bullet}(\W,\W'), D')$ with
\[
\scK^{q}(\W,\W')=\bigoplus_{q_{1}+q_{2}=q}C^{q_{1}}(\W,\W';\scK^{q_{2}}),\qquad D'=(-1)^{q_{2}}\check{\delta}+d.
\]
The resulting cohomology is 
canonically isomorphic with $H^{q}_{D_{\scK}}(\W,\W')$. We also have  an \iso\
as in Theorem \ref{natisos} and the correspondence between $H^{q}_{d_{\scK}}(X,X')$ and $H^{q}(\W,\W';\scS)$ remains the same.
\smallskip

\noindent
{\bf 3.} Similar remarks as above apply to the \iso\ of Theorem \ref{gnatisos}, with $\scK^{\bullet}(\W,\W')$ replaced by $\scC(\scK)^{\bullet}(X,X')$.
\end{remark}


\subsection{\v{C}ech cohomology on paracompact spaces}

Let $X$ be a topological space and $X'$ an open set in $X$.
For a sheaf $\scS$ on $X$, we set
\[
\check H^{q}(X,X';\scS)=\underset{(\W,\W')}{\underset\lra\lim}H^{q}(\W,\W';\scS),
\]
the direct limit in the set of pairs of coverings $(\W,\W')$ of $(X,X')$ directed by the relation of refinement.
Let $0\ra\scS\ra\scC^{\bullet}(\scS)$ be the canonical resolution.
Then by Proposition~\ref{firstiso}, there is an \iso\ 
$H^{q}(X,X';\scS)\overset\sim\ra H^{q}(\W,\W';\scC^{\bullet}(\scS))$.
On the other hand there is a morphism $H^{q}(\W,\W';\scS)\ra H^{q}(\W,\W';\scC^{\bullet}(\scS))$
(cf. \eqref{secondmor}).
Thus we have  canonical morphisms
\[
H^{q}(\W,\W';\scS)\lra H^{q}(X,X';\scS)\quad\text{and}\quad \check H^{q}(X,X';\scS)\lra H^{q}(X,X';\scS).
\]

\begin{proposition}\label{proppara} Suppose $X$ and $X'$ are paracompact. Then the second morphism above is an \iso.
\end{proposition}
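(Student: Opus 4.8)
The statement to prove is Proposition \ref{proppara}: if $X$ and $X'$ are paracompact, then the canonical morphism $\check H^{q}(X,X';\scS)\to H^{q}(X,X';\scS)$ is an isomorphism. The plan is to reduce the relative statement to the classical absolute one ($X'=\emptyset$), namely that $\check H^{q}(X;\scS)\simeq H^{q}(X;\scS)$ on a paracompact space, which is standard (Godement, \cite{G}). The bridge between the absolute and relative settings is provided by the long exact sequences already at our disposal: Proposition \ref{propfl}(3) with $X''=\emptyset$ gives the long exact sequence for $H^{\bullet}(X,X';\scS)$, and the analogous sequence \eqref{lexacthyp} (with $\W''=\emptyset$), after passing to the direct limit over pairs of coverings $(\W,\W')$, gives a long exact sequence for $\check H^{\bullet}(X,X';\scS)$.

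\textbf{Key steps.} First I would check that taking the direct limit over the directed set of pairs of coverings $(\W,\W')$ of $(X,X')$ is exact, so that the limit of the long exact sequences \eqref{lexacthyp} (with $\W''=\emptyset$) is again exact; this yields
\[
\cdots\to\check H^{q-1}(X';\scS)\to\check H^{q}(X,X';\scS)\to\check H^{q}(X;\scS)\to\check H^{q}(X';\scS)\to\cdots.
\]
Here one must be slightly careful: a pair $(\W,\W')$ of coverings of $(X,X')$ restricts to a covering $\W'$ of $X'$ and contains a covering $\W$ of $X$, and cofinality arguments show that the limits over these are computing $\check H^{\bullet}(X';\scS)$ and $\check H^{\bullet}(X;\scS)$ respectively — this uses that $X'$, being open in $X$, is itself paracompact by hypothesis, and that every covering of $X'$ extends to a pair. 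Second, I would invoke the functoriality (Proposition \ref{propfunc}, in the form relating \v{C}ech to the canonical resolution) to obtain a morphism of long exact sequences from the above \v{C}ech sequence to the sequence of Proposition \ref{propfl}(3) with $X''=\emptyset$, with vertical maps the canonical morphisms $\check H^{q}(X,X';\scS)\to H^{q}(X,X';\scS)$, $\check H^{q}(X;\scS)\to H^{q}(X;\scS)$, $\check H^{q}(X';\scS)\to H^{q}(X';\scS)$. Third, the two outer columns (the absolute cases over $X$ and over $X'$) are isomorphisms by the classical absolute theorem on paracompact spaces. Finally, the five lemma forces the middle vertical map to be an isomorphism in every degree.

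\textbf{Main obstacle.} The routine part is the five-lemma bookkeeping; the genuinely delicate point is the first step, i.e.\ justifying the exactness of the relevant direct limit and, more precisely, that the direct limit of the $\check{\delta}$-cohomologies $H^{q}(\W,\W';\scS)$ over the directed set of \emph{pairs} $(\W,\W')$ is well-behaved and compatible with the long exact sequence \eqref{lexacthyp}. One has to confirm that the collection of pairs of coverings of $(X,X')$ is genuinely directed under common refinement (refining $\W$ and intersecting with the subfamily indexed by $I'$ to refine $\W'$), that refinement maps commute with the connecting homomorphisms $\delta$, and that the restriction-to-$X'$ and inclusion-of-$\W$ operations are cofinal enough to identify the end terms with $\check H^{\bullet}(X';\scS)$ and $\check H^{\bullet}(X;\scS)$. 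Once this limit formalism is in place — together with the standard fact that filtered colimits of abelian groups are exact, so the colimit of exact sequences \eqref{lexacthyp} stays exact — the rest of the argument is immediate from the five lemma and the absolute case.
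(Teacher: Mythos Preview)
Your proposal is correct and follows essentially the same approach as the paper's own proof: reduce to the classical absolute result on paracompact spaces, use the long exact sequence of the pair on both the \v{C}ech and derived sides, and conclude by the five lemma. The paper's proof is terser (it simply asserts that $\check H^{\bullet}(X,X';\scS)$ satisfies property~(3) of Proposition~\ref{propfl} and invokes the five lemma), whereas you have spelled out the direct-limit and cofinality verifications that the paper leaves implicit.
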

\begin{proof} Recall that it is true in the absolute case so that
 $\check H^{q}(X;\scS)\simeq H^{q}(X;\scS)$ and $\check H^{q}(X';\scS)\simeq H^{q}(X';\scS)$. On the
 other hand the cohomology $\check H^{q}(X,X';\scS)$ also has the property (3) in Proposition \ref{propfl}. Thus by the five lemma, the above is an \iso.
\end{proof}



\subsection{Complexes of  fine sheaves}

In this subsection we let  $X$ be a paracompact 
topological space and consider only locally finite coverings.

\paragraph{Fine sheaves\,:} 
A sheaf  $\scG$ on $X$ is {\em fine} if the sheaf
$\scH om(\scG,\scG)$ is soft. A fine sheaf is soft. If $\scR$ is a soft sheaf of rings with unity, every $\scR$-module is
fine. Thus $\scR$ itself is fine.


A sheaf $\scG$ is fine \iff\ it is an $\scR$-module, where $\scR$ is a
sheaf of rings with unity  \st, for any 
covering $\W=\{W_{\a}\}$ of $X$, there exists  
a partition of unity subordinate to $\W$, i.e., 
a collection $\{\rho_\a\}$, $\rho_{\a}\in\scR(X)$,
\st\ $\op{supp}\rho_{\a}\subset W_\a$ and
$\sum_\a\rho_\a\equiv1$. 
We may use this to show that for a fine sheaf $\scG$ and any covering $\W$, $H^{q}(\W;\scG)=0$ for $q\ge 1$.
Indeed, 
 if $\sigma$ is in $C^{q}(\W;\scG)$ and $\check{\delta}\sigma=0$, then $\sigma=\check{\delta}\tau$, where $\tau\in C^{q-1}(\W;\scG)$ is defined by 
\begin{equationth}\label{cob}
\tau_{\a_{0}\dots\a_{q-1}}=\sum_{\a}\rho_{\a}\sigma_{\a\a_{0}\dots\a_{q-1}}.
\end{equationth}

\paragraph{Canonical \iso s\,:}




We introduce the following\,:
\begin{definition}\label{defg} Let  $\scK^{\bullet}$ be a complex of sheaves on $X$. A covering $\W=\{W_\a\}$ of $X$ is  {\em good for} $\scK^{\bullet}$ if the hypothesis of Proposition
\ref{secondiso} holds, i.e., $H^{q_{2}}(\scK^{\bullet}(W_{\a_{0}\dots\a_{q_{1}}}))=0$ for $q_{1}\ge 0$ and 
$q_{2}\ge 1$. 
\end{definition}


\begin{theorem}\label{thsummary} Let  $\scK^{\bullet}$ be a complex of  fine sheaves on a paracompact space $X$ and  $\scS$  the kernel of $d_{\scK}:\scK^{0}\ra\scK^{1}$. 

\smallskip

\noindent
{\bf 1.}  
For any covering $\W$,
 there is a canonical \iso
\[
H^{q}_{d_{\scK}}(X)\overset\sim\lra H^{q}(\W;\scK^{\bullet}).
\]

\noindent
{\bf 2.} If $\W$ is good for  $\scK^{\bullet}$,  there is a canonical \iso
\[
H^{q}(\W,\W';\scK^{\bullet})\overset\sim\longleftarrow H^{q}(\W,\W';\scS).
\]

\noindent
{\bf 3.} Suppose  every open set in $X$ is paracompact.  If $\W$ is good for $\scK^{\bullet}$ and if $0\ra\scS\ra\scK^{\bullet}$ is a resolution,
\[
H^{q}(\W,\W';\scS)\simeq H^{q}(X,X';\scS).
\]
\end{theorem}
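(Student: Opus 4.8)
The plan is to combine the isomorphism in Part 2 of the same theorem with the Relative Leray theorem (Corollary \ref{thleray}), reducing everything to a statement about vanishing of higher cohomology of the fine sheaves $\scK^{q}$ on the intersections $W_{\a_{0}\dots\a_{q_{1}}}$. First I would observe that, since $0\ra\scS\ra\scK^{\bullet}$ is a resolution and $\W$ is good for $\scK^{\bullet}$, Part 2 gives a canonical isomorphism $H^{q}(\W,\W';\scS)\simeq H^{q}(\W,\W';\scK^{\bullet})$; so it suffices to identify $H^{q}(\W,\W';\scK^{\bullet})$ with $H^{q}(X,X';\scS)$. For this I would aim to apply Theorem \ref{natisos}: its second hypothesis, $H^{q_{2}}(\scK^{\bullet}(W_{\a_{0}\dots\a_{q_{1}}}))=0$ for $q_{2}\ge 1$, is precisely the goodness of $\W$; its first hypothesis is $H^{q_{2}}(\W,\W';\scK^{q_{1}})=0$ for $q_{1}\ge 0$, $q_{2}\ge 1$.

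The key step is therefore to verify $H^{q_{2}}(\W,\W';\scK^{q_{1}})=0$ for $q_{2}\ge 1$. Each $\scK^{q_{1}}$ is a fine sheaf on the paracompact space $X$; since every open set in $X$ is paracompact, for any locally closed $A$ the restriction $\scK^{q_{1}}|_{A}$ is again fine (hence soft), by the cited fact from \cite{G}. In particular each intersection $W_{\a_{0}\dots\a_{q_{1}}}$, being open, has $H^{q_{2}}(W_{\a_{0}\dots\a_{q_{1}}};\scK^{q_{1}})=0$ for $q_{2}\ge 1$, and the same holds on $X'$ and its intersections with the $W_{\a}$. By the Relative Leray theorem (Corollary \ref{thleray}) applied to the sheaf $\scK^{q_{1}}$ — whose hypotheses are exactly these vanishings — we get $H^{q_{2}}(\W,\W';\scK^{q_{1}})\simeq H^{q_{2}}(X,X';\scK^{q_{1}})$. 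Finally, since $\scK^{q_{1}}$ is soft on the paracompact pair $(X,X')$, the exact sequence \eqref{exfine} together with Proposition \ref{propfl}(3) (with $X''=\emptyset$) shows $H^{q_{2}}(X,X';\scK^{q_{1}})=0$ for $q_{2}\ge 2$; and for $q_{2}=1$ one needs a further argument, since \eqref{exfine} only gives an exact sequence ending in $H^{1}$, not its vanishing.

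The main obstacle is exactly this $q_{2}=1$ case: for a general soft sheaf $\scG$ on a paracompact pair, $H^{1}(X,X';\scG)\ne 0$ in general (this is stated explicitly in the text). The resolution is that we are not applying Theorem \ref{natisos} directly to $\scK^{\bullet}$ but should instead invoke Part 2 first and then only need the Relative Leray theorem for $\scS$ itself — that is, I would restructure the argument so that the covering-to-global comparison is made on $\scS$, not on the $\scK^{q_{1}}$. Concretely: by goodness of $\W$ and the fact that $\scK^{\bullet}$ resolves $\scS$, on each open set $W_{\a_{0}\dots\a_{q_{1}}}$ the complex $\scK^{\bullet}(W_{\a_{0}\dots\a_{q_{1}}})$ computes $H^{\bullet}(W_{\a_{0}\dots\a_{q_{1}}};\scS)$ via the de~Rham type theorem (Theorem \ref{thdRtypesoft}), hence $H^{q_{2}}(W_{\a_{0}\dots\a_{q_{1}}};\scS)=0$ for $q_{2}\ge 1$ — here one uses that $\scK^{q_{1}}|_{W_{\a_{0}\dots\a_{q_{1}}}}$ is soft, so that $H^{q_{2}}(W_{\a_{0}\dots\a_{q_{1}}};\scK^{p})=0$, which by the spectral sequence forces the goodness-type vanishing to coincide with vanishing of sheaf cohomology of $\scS$. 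Wait — more carefully: goodness says $H^{q_{2}}(\scK^{\bullet}(W_{\a_{0}\dots\a_{q_{1}}}))=0$, and since this complex computes $H^{q_{2}}(W_{\a_{0}\dots\a_{q_{1}}};\scS)$ (by Theorem \ref{thdRtypesoft}, the restricted $\scK^{\bullet}$ being a soft resolution of $\scS$ on the paracompact open set $W_{\a_{0}\dots\a_{q_{1}}}$), we conclude $H^{q_{2}}(W_{\a_{0}\dots\a_{q_{1}}};\scS)=0$ for $q_{2}\ge 1$. Then the Relative Leray theorem (Corollary \ref{thleray}) applies verbatim to $\scS$ and yields the canonical isomorphism $H^{q}(\W,\W';\scS)\simeq H^{q}(X,X';\scS)$, which is the claim.
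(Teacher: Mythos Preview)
Your final argument---use Theorem~\ref{thdRtypesoft} on each paracompact open $W_{\a_{0}\dots\a_{q_{1}}}$ to convert goodness of $\W$ into $H^{q_{2}}(W_{\a_{0}\dots\a_{q_{1}}};\scS)=0$ for $q_{2}\ge 1$, then invoke the Relative Leray theorem (Corollary~\ref{thleray})---is correct and is exactly the paper's proof of Part~3; the first half of your write-up is a false start that you yourself diagnose and abandon, and it should simply be deleted. For Parts~1 and~2, which you do not address, the paper just cites Proposition~\ref{firstiso} with $\W'=\emptyset$ (using that fine sheaves have vanishing higher \v{C}ech cohomology) and Proposition~\ref{secondiso}, respectively.
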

\begin{proof} {\bf 1.} This follows from Proposition \ref{firstiso} with $\W'=\emptyset$.
{\bf 2.} This is the content of Proposition \ref{secondiso}.
{\bf 3.} By Theorem \ref{thdRtypesoft}, $H^{q_{2}}(W_{\a_{0}\dots\a_{q_{1}}};\scS)\simeq H^{q_{2}}(\scK^{\bullet}(W_{\a_{0}\dots\a_{q_{1}}}))$. Thus the 
\iso\ follows from Corollary \ref{thleray}. 
\end{proof}
We now come back to  the case~II in Subsection~\ref{sscech}. 
\paragraph{Case of coverings with two open sets\,:} In the case $\W=\{W_{0},W_{1}\}$, we have
\[
\scK^{q}(\W)=\scK^{q}(W_{0})\oplus  \scK^{q}(W_{1})\oplus \scK^{q-1}(W_{01})
\]
and the inclusion $\scK^{q}(X)\hra C^{0}(\W;\scK^{q})\subset\scK^{q}(\W)$ is given by $s\mapsto (s|_{W_{0}},s|_{W_{1}},0)$. It induces the first \iso\ in Theorem \ref{thsummary}.\,1;\,$H^{q}_{d_{\scK}}(X)\overset\sim\ra
 H^{q}_{D_{\scK}}(\W)$.

\begin{proposition}\label{propinvtwo} 
The inverse of the above \iso\ is given by
assigning to the class of $\xi=(\xi_{0},\xi_{1},\xi_{01})$ the class of 
$s$ given by $\xi_{0}+d(\rho_{1}\xi_{01})$ on $W_{0}$ and by $\xi_{1}-d(\rho_{0}\xi_{01})$ on $W_{1}$.
\end{proposition}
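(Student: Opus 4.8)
The statement to prove is Proposition~\ref{propinvtwo}: that the inverse of the isomorphism $H^{q}_{d_{\scK}}(X)\overset\sim\ra H^{q}_{D_{\scK}}(\W)$ induced by $s\mapsto (s|_{W_0}, s|_{W_1}, 0)$ is given by sending the class of $\xi=(\xi_0,\xi_1,\xi_{01})$ to the class of the section $s$ defined piecewise by $s=\xi_0 + d(\rho_1\xi_{01})$ on $W_0$ and $s=\xi_1 - d(\rho_0\xi_{01})$ on $W_1$, where $\{\rho_0,\rho_1\}$ is a partition of unity subordinate to $\W$. The plan is to verify three things: (a) that the piecewise formula genuinely defines a global section $s\in\scK^{q}(X)$, i.e.\ the two expressions agree on $W_{01}$; (b) that $s$ is a cocycle for $d_{\scK}$ whenever $\xi$ is a cocycle for $D_{\scK}$, so the map on cohomology is well defined; and (c) that the map it defines is a two-sided inverse of $\varphi_{\W}$.

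First I would check (a). On $W_{01}$ the difference of the two expressions is $\xi_0 - \xi_1 + d(\rho_1\xi_{01}) + d(\rho_0\xi_{01}) = \xi_0-\xi_1 + d((\rho_0+\rho_1)\xi_{01}) = \xi_0 - \xi_1 + d\xi_{01}$, using $\rho_0+\rho_1\equiv 1$. The cocycle condition for $\xi$, read off from the explicit differential $D(\xi_0,\xi_1,\xi_{01})=(d\xi_0, d\xi_1, \xi_1-\xi_0-d\xi_{01})$ given in Case~II of Subsection~\ref{sscech}, says precisely $d\xi_0=0$, $d\xi_1=0$ and $\xi_1-\xi_0-d\xi_{01}=0$. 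The last of these gives $\xi_0-\xi_1+d\xi_{01}=0$ on $W_{01}$, so the two local expressions for $s$ agree and $s\in\scK^{q}(X)$ is well defined. (Here one uses that $\rho_1\xi_{01}$, extended by zero, is a section of $\scK^{q-1}$ over $W_0$ because $\op{supp}\rho_1\subset W_1$, and symmetrically; this is the standard use of the fine structure.)

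Next, (b): I would compute $ds$ on each piece. On $W_0$, $ds = d\xi_0 + d(d(\rho_1\xi_{01})) = d\xi_0 = 0$ since $d\circ d = 0$ and $d\xi_0=0$; on $W_1$ likewise $ds = d\xi_1 - d(d(\rho_0\xi_{01})) = 0$. So $ds=0$ and $s$ is a $d_{\scK}$-cocycle. I would then check that the construction respects coboundaries: if $\xi = D\eta$ for some $\eta=(\eta_0,\eta_1,\eta_{01})\in\scK^{q-1}(\W)$, then $\xi_0 = d\eta_0$, $\xi_1 = d\eta_1$, $\xi_{01} = \eta_1-\eta_0-d\eta_{01}$, and substituting into the formula for $s$ and using $\rho_0+\rho_1\equiv1$ one finds $s = d(\text{something global})$; concretely $s$ should equal $d$ applied to the global section built from $\eta_0 + d(\rho_1\eta_{01})$ on $W_0$ and $\eta_1 - d(\rho_0\eta_{01})$ on $W_1$ (itself well-defined by the same cancellation argument as in (a), now with $\eta_{01}$ in place of $\xi_{01}$ and the $q-2$ term of $\eta$ absorbed). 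Thus the class $[s]$ depends only on $[\xi]$, giving a well-defined map $\Psi: H^{q}_{D_{\scK}}(\W)\ra H^{q}_{d_{\scK}}(X)$.

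Finally, (c): I would show $\Psi\circ\varphi_{\W} = \mathrm{id}$ and $\varphi_{\W}\circ\Psi = \mathrm{id}$. The first is immediate: starting from a global cocycle $t$, we have $\varphi_{\W}(t)=(t|_{W_0},t|_{W_1},0)$, and the formula for $\Psi$ then returns $t + d(\rho_1\cdot 0)=t$ on $W_0$ and $t$ on $W_1$, i.e.\ $t$ itself, on the nose. For the second direction, given $\xi=(\xi_0,\xi_1,\xi_{01})$ a $D_{\scK}$-cocycle, I must exhibit $\zeta\in\scK^{q-1}(\W)$ with $\varphi_{\W}(s) - \xi = D\zeta$, where $s=\Psi(\xi)$. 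One computes $\varphi_{\W}(s)-\xi = (s|_{W_0}-\xi_0,\ s|_{W_1}-\xi_1,\ -\xi_{01}) = (d(\rho_1\xi_{01}),\ -d(\rho_0\xi_{01}),\ -\xi_{01})$ and checks that this equals $D\zeta$ for $\zeta = (\rho_1\xi_{01},\ -\rho_0\xi_{01},\ 0)\in\scK^{q-1}(\W)$: indeed $D(\rho_1\xi_{01},-\rho_0\xi_{01},0) = (d(\rho_1\xi_{01}),\ d(-\rho_0\xi_{01}),\ -\rho_0\xi_{01}-\rho_1\xi_{01}-0) = (d(\rho_1\xi_{01}),\ -d(\rho_0\xi_{01}),\ -\xi_{01})$, matching exactly. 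Here I use once more that $\rho_1\xi_{01}$ and $\rho_0\xi_{01}$ extend by zero to global sections over $W_0$ and $W_1$ respectively, so $\zeta$ is a legitimate element of $\scK^{q-1}(\W)$. This gives $\varphi_{\W}\circ\Psi=\mathrm{id}$, completing the proof. I expect no serious obstacle here; the only point requiring care is consistently tracking the supports of $\rho_i\xi_{01}$ and the signs in the differential $D_{\scK}$, and ensuring each partially-multiplied section is interpreted as a section over the correct open set before restriction.
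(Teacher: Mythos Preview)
Your proof is correct and follows essentially the same route as the paper: your homotopy $\zeta=(\rho_1\xi_{01},-\rho_0\xi_{01},0)$ in step~(c) is exactly the negative of the paper's $\eta^{(0)}=\tau=(-\rho_1\xi_{01},\rho_0\xi_{01})$, which the paper obtains from the general \v{C}ech contracting homotopy~\eqref{cob} and then plugs into the correspondence~\eqref{correspCdRdR}; you simply write the homotopy down directly and verify $D\zeta=\varphi_{\W}(s)-\xi$ by hand.

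One small slip worth flagging: in step~(b) your proposed primitive is wrong, since $d\bigl(\eta_0+d(\rho_1\eta_{01})\bigr)=d\eta_0=\xi_0$, whereas $s|_{W_0}=\xi_0+d(\rho_1\xi_{01})$. A correct global primitive is given on $W_0$ by $\eta_0+\rho_1\xi_{01}+d(\rho_1\eta_{01})$ and on $W_1$ by $\eta_1-\rho_0\xi_{01}-d(\rho_0\eta_{01})$ (one checks these agree on $W_{01}$). However, step~(b) is in fact redundant: once~(c) establishes $[\varphi_{\W}(s)]=[\xi]$ for every cocycle $\xi$, and $\varphi_{\W}$ is already known to be an isomorphism (Theorem~\ref{thsummary}.1), the assignment $[\xi]\mapsto[s]$ is automatically well-defined and equal to $\varphi_{\W}^{-1}$.
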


\begin{proof} Given a cocycle $\xi=(\xi_{0},\xi_{1},\xi_{01})$ in $\scK^{q}(\W)$.
We have $\check{\delta}\xi^{(1)}=0$ and thus $\xi^{(1)}=\check{\delta}\tau$, $\tau_{\a}=\sum_{\b}\rho_{\b}\xi^{(1)}_{\b\a}$ (cf. \eqref{cob}). In particular, $\tau_{0}=-\rho_{1}\xi^{(1)}_{01}$ and $\tau_{1}=\rho_{0}\xi^{(1)}_{01}$. Then letting $s=\o$ and $\eta^{(0)}=\tau$ in (\ref{correspCdRdR}), we have the proposition.
\end{proof}

\begin{remark}\label{remtwo} {\bf 1.} The two expressions above coincide on $W_{01}$ by the cocycle condition.

\smallskip

\noindent
{\bf 2.} In the case $W_{1}=X$, we may set $\rho_{0}\equiv 0$ and $\rho_{1}\equiv 1$
so that the inverse of the above \iso\ is given by
assigning to the class of $\xi=(\xi_{0},\xi_{1},\xi_{01})$ the class of $\xi_{1}$, which is consistent with
Proposition \ref{propsp}.
\end{remark}

If we set $Z^{q}(\W)=\op{Ker}D^{q}$ and $B^{q}(\W)=\op{Im}D^{q-1}$, then
$H^{q}_{D_{\scK}}(\W)=Z^{q}(\W)/B^{q}(\W)$ by definition. In fact we may somewhat simplify the coboundary
group $B^{q}(\W)$\,:

\begin{proposition} We have
\[
B^{q}(\W)=\{\,\xi\in \scK^{q}(\W)\mid \xi=(d\eta_{0},d\eta_{1},\eta_{1}-\eta_{0}),\
\text{for some}\ \eta_{i}\in \scK^{q-1}(W_{i}), i=0, 1\,\}.
\]
\end{proposition}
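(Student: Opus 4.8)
The plan is to compute $B^q(\W)=\op{Im}D^{q-1}$ directly from the formula for $D$ in the two-set case, and then observe that the subset on the right-hand side exhausts the image. Recall that a cochain in $\scK^{q-1}(\W)$ is a triple $\eta=(\eta_0,\eta_1,\eta_{01})$ with $\eta_i\in\scK^{q-1}(W_i)$ and $\eta_{01}\in\scK^{q-2}(W_{01})$, and that $D(\eta_0,\eta_1,\eta_{01})=(d\eta_0,d\eta_1,\eta_1-\eta_0-d\eta_{01})$. Thus an arbitrary element of $B^q(\W)$ has the form $(d\eta_0,d\eta_1,\eta_1-\eta_0-d\eta_{01})$ for some choice of $\eta_0,\eta_1,\eta_{01}$.

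First I would prove the inclusion $\supseteq$: given $\eta_i\in\scK^{q-1}(W_i)$, the triple $(d\eta_0,d\eta_1,\eta_1-\eta_0)$ equals $D(\eta_0,\eta_1,0)$, hence lies in $B^q(\W)$. Next, for the inclusion $\subseteq$, I would start from a general coboundary $\xi=(d\eta_0,d\eta_1,\eta_1-\eta_0-d\eta_{01})$ and absorb the term $d\eta_{01}$ into one of the $\eta_i$. The point is that $\eta_{01}\in\scK^{q-2}(W_{01})$ is a section only on the overlap, so it cannot be subtracted from $\eta_0$ or $\eta_1$ globally; here is where fineness enters. Using a partition of unity $\{\rho_0,\rho_1\}$ subordinate to $\W=\{W_0,W_1\}$ (with $\scK^\bullet$ a complex of $\scR$-modules), the section $\rho_0\eta_{01}$, a priori defined on $W_{01}$, extends by zero to a section of $\scK^{q-2}$ on $W_1$, and likewise $\rho_1\eta_{01}$ extends to a section on $W_0$. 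Set $\eta_0'=\eta_0-d(\rho_1\eta_{01})\in\scK^{q-1}(W_0)$ and $\eta_1'=\eta_1+d(\rho_0\eta_{01})\in\scK^{q-1}(W_1)$. Then $d\eta_0'=d\eta_0$ and $d\eta_1'=d\eta_1$ (since $d\circ d=0$), while on $W_{01}$ one has $\eta_1'-\eta_0'=\eta_1-\eta_0+d((\rho_0+\rho_1)\eta_{01})=\eta_1-\eta_0+d\eta_{01}$, so $\eta_1'-\eta_0'-d\eta_{01}$ is replaced correctly — more precisely, $\xi=(d\eta_0',d\eta_1',\eta_1'-\eta_0')$, which is of the asserted form. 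This shows $\xi$ lies in the right-hand side, completing the proof.

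The only subtlety — and the step I expect to require the most care — is the bookkeeping of extensions by zero: one must check that $d(\rho_1\eta_{01})$, defined a priori on $W_{01}$, genuinely extends to a section of $\scK^{q-1}$ on all of $W_0$, which follows because $\op{supp}(\rho_1)\cap W_0$ is closed in $W_0$ and contained in $W_{01}$, so $\rho_1\eta_{01}$ extends by zero to $W_0$ and then $d$ is applied there; symmetrically on $W_1$. Once these extensions are in hand, the identity $\xi=D(\eta_0',\eta_1',0)$ is a routine verification using $d\circ d=0$ and $\rho_0+\rho_1\equiv 1$ on $W_{01}$. Everything else is formal manipulation of the explicit differential.
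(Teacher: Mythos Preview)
Your approach is essentially identical to the paper's: both prove the nontrivial inclusion $\subseteq$ by using a partition of unity $\{\rho_0,\rho_1\}$ to absorb the $d\eta_{01}$ term into modified sections $\eta_0',\eta_1'$. However, you have a sign error in the definition of $\eta_0',\eta_1'$. With your choices $\eta_0'=\eta_0-d(\rho_1\eta_{01})$ and $\eta_1'=\eta_1+d(\rho_0\eta_{01})$, one gets
\[
\eta_1'-\eta_0'=\eta_1-\eta_0+d\eta_{01},
\]
whereas the third component of $\xi$ is $\eta_1-\eta_0-d\eta_{01}$; so your claim $\xi=(d\eta_0',d\eta_1',\eta_1'-\eta_0')$ fails. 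The paper takes the opposite signs, $\eta_0'=\eta_0+d(\rho_1\eta_{01})$ and $\eta_1'=\eta_1-d(\rho_0\eta_{01})$, which gives $\eta_1'-\eta_0'=\eta_1-\eta_0-d\eta_{01}$ as required. With that correction your argument is complete and matches the paper's.
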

\begin{proof} It suffices to show that the left hand side is in the right hand side. For $\xi\in B^{q}(\W)$, there
exists $\eta=(\eta_{0},\eta_{1},\eta_{01})$ \st\ $\xi=D\eta$. Take a partition of unity $\{\rho_{0},\rho_{1}\}$ subordinate to $\W$ and set
\[
\eta'_{0}=\eta_{0}+d(\rho_{1}\eta_{01}),\quad
\eta'_{1}=\eta_{1}-d(\rho_{0}\eta_{01}).
\]
Then we see that $\xi=(d\eta'_{0},d\eta'_{1},\eta'_{1}-\eta'_{0})$.
\end{proof}

\lsection{Relative  cohomology for the sections of a sheaf complex}\label{seccoffine}

Let $X$ be a topological space and $X'$ an open set in $X$.
Also let $\scK^{\bullet}$ be a complex of 
sheaves on  $X$.
Letting $V_{0}=X'$ and $V_{1}$ a \nbd\ of the closed set $S=X\ssm X'$, consider the coverings $\V=\{V_{0},V_{1}\}$ and 
$\V'=\{V_{0}\}$ of $X$ and $X'$  (cf. the case~II in Subsection~\ref{sscech}). 
We have the cohomology $H^{q}_{D_{\scK}}(\V,\V')$ as 
 the cohomology of the complex $(\scK^{\bullet}(\V,\V'),D_{\scK})$, where
\begin{equationth}\label{cochrel}
\scK^{q}(\V,\V')=\scK^{q}(V_{1})\oplus \scK^{q-1}(V_{01}),\qquad V_{01}=V_{0}\cap V_{1},
\end{equationth}
and $D:\scK^{q}(\V,\V')\ra \scK^{q+1}(\V,\V')$ is given by 
$D(\xi_{1},\xi_{01})=(d\xi_{1},\xi_{1}-d\xi_{01})$.
Noting that $\scK^{q}(\{V_{0}\})=\scK^{q}(X')$,
we have the exact sequence
\begin{equationth}\label{sexactreld}
0\lra \scK^{\bullet}(\V,\V')\overset{j^{-1}}\lra \scK^{\bullet}(\V)\overset{i^{-1}}\lra \scK^{\bullet}(X')\lra 0,
\end{equationth}
where $j^{-1}(\xi_{1},\xi_{01})=(0,\xi_{1},\xi_{01})$ and $i^{-1}(\xi_{0},\xi_{1},\xi_{01})=\xi_{0}$. This gives rise to the exact sequence
(cf. \eqref{lexacthyp})
\begin{equationth}\label{lexactrelD}
\cdots\lra H^{q-1}_{d_{\scK}}(X')\overset{\delta}\lra H^{q}_{D_{\scK}}(\V,\V')\overset{j^{-1}}\lra H^{q}_{D_{\scK}}(\V)\overset{i^{-1}}\lra H^{q}_{d_{\scK}}(X')\lra\cdots,
\end{equationth}
where $\delta$ assigns to the class of $\t$ the class of $(0,-\t)$.

\vv

Now we consider the special case where $V_{1}=X$.
Thus, letting $V_{0}=X'$ and $V^{\star}_{1}=X$,
we  consider
the coverings $\V^{\star}=\{V_{0},V^{\star}_{1}\}$ and $\V'=\{V_{0}\}$ of $X$ and $X'$.

\begin{definition}\label{defrelcose} We denote $H^{q}_{D_{\scK}}(\V^{\star},\V')$  by $H^{q}_{D_{\scK}}(X,X')$ and call it the 
 {\em relative cohomology for the sections} of $\scK^{\bullet}$ on $(X,X')$.
\end{definition}

In the case $X'=\emptyset$, it coincides with $H^{q}_{d_{\scK}}(X)$.
If we denote by $i:X'\hra X$ the inclusion, by construction we see that (cf. Subsection \ref{ssoemb})\,:
\begin{equationth}\label{rel=cmc}
\scK^{\bullet}(\V^{\star},\V')=\scK^{\bullet}(i)\quad\text{and}\quad H^{q}_{D_{\scK}}(X,X')=H^{q}_{d_{\scK}}(i).
\end{equationth}

By Proposition \ref{propsp}, there is a canonical \iso\ 
$H^{q}_{d_{\scK}}(X)\overset\sim\ra H^{q}_{D_{\scK}}(\V^{\star})$, which assigns to the class of $s$ the
class of $(s|_{X'},s,0)$ . Its inverse assigns to the class of $(\xi_{0},\xi_{1},\xi_{01})$ the class of $\xi_{1}$.
Thus from \eqref{lexactrelD} we have the  exact sequence
\begin{equationth}\label{lexactrelD2}
\cdots\lra H^{q-1}_{d_{\scK}}(X')\overset{\delta}\lra H^{q}_{D_{\scK}}(X,X')\overset{j^{-1}}\lra H^{q}_{d_{\scK}}(X)\overset{i^{-1}}\lra H^{q}_{d_{\scK}}(X')\lra\cdots,
\end{equationth}
where $j^{-1}$ assigns to the class of
$(\xi_{1},\xi_{01})$  the class of $\xi_{1}$ and $i^{-1}$ assigns to the class of
$s$  the class of $s|_{X'}$. It coincides with the sequence \eqref{exactcomemb}, except $\delta=-\b^{*}$.

\begin{proposition}\label{proptriplerd} For a triple $(X,X',X'')$, there is an exact sequence
\[
\cdots\lra H^{q-1}_{D_{\scK}}(X',X'')\overset{\delta}\lra H^{q}_{D_{\scK}}(X,X')\overset{j^{-1}}\lra H^{q}_{D_{\scK}}(X,X'')\overset{i^{-1}}\lra H^{q}_{D_{\scK}}(X',X'')\lra\cdots.
\]
\end{proposition}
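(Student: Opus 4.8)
The plan is to derive the long exact sequence from a short exact sequence of complexes, exactly as the analogous sequences \eqref{lexactrelD} and \eqref{lexactrelD2} were obtained. Here $H^{q}_{D_{\scK}}(X,X')=H^{q}_{d_{\scK}}(i)$ is the cohomology of the co-mapping cone $\scK^{\bullet}(i)$, with $\scK^{q}(i)=\scK^{q}(X)\oplus\scK^{q-1}(X')$ and $d(s,t)=(ds,i^{-1}s-dt)$; similarly $H^{q}_{D_{\scK}}(X,X'')$ and $H^{q}_{D_{\scK}}(X',X'')$ are the cohomologies of $\scK^{\bullet}(X)\oplus\scK[-1]^{\bullet}(X'')$ and $\scK^{\bullet}(X')\oplus\scK[-1]^{\bullet}(X'')$ with the corresponding differentials. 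Since $X''$ is open in $X'$ and $X'$ is open in $X$, all the relevant restriction maps $\scK^{\bullet}(X)\to\scK^{\bullet}(X')\to\scK^{\bullet}(X'')$ are defined and commute.

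First I would write down the natural maps. Define $j^{-1}:\scK^{\bullet}(X,X'')\to\scK^{\bullet}(X,X')$ — note the direction: a cocycle for $(X,X'')$ is a pair $(s,u)$ with $u\in\scK^{\bullet-1}(X'')$, and composing the restriction $\scK^{\bullet}(X)\to\scK^{\bullet}(X')$ with the inclusion data gives a morphism to $\scK^{\bullet}(X,X')$; concretely, the pair $(s,u)$ with $u$ on $X''$ is sent to the pair $(s,u)$ but now viewed with the $X'$-component being $s|_{X'}$ paired against $u$ — more precisely, following the pattern of Proposition~\ref{proptriplerd}'s flabby analogue (Proposition~\ref{propfl}(3)), $j^{-1}$ is induced by the inclusion of pairs $(X,X'')\hookrightarrow(X,X')$ and $i^{-1}$ by $(X',X'')\hookrightarrow(X,X'')$. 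Second, I would check that the sequence
\[
0\lra\scK^{\bullet}(X,X')\overset{j^{-1}}\lra\scK^{\bullet}(X,X'')\overset{i^{-1}}\lra\scK^{\bullet}(X',X'')\lra 0
\]
is a short exact sequence of complexes. Surjectivity of $i^{-1}$ is the point where one must be careful: a cochain for $(X',X'')$ is a pair $(t,u)$ with $t\in\scK^{\bullet}(X')$, $u\in\scK^{\bullet-1}(X'')$, and one lifts it to $(X,X'')$ simply by taking any section-less lift — but in the cone formalism $\scK^{\bullet}(i)$ uses $\scK^{\bullet}(X)$, not a subsheaf of sections vanishing somewhere, so the relevant map $\scK^{\bullet}(X)\to\scK^{\bullet}(X')$ is literally restriction and need not be surjective; however, at the level of the \emph{pairs}, the map $\scK^{q}(X)\oplus\scK^{q-1}(X'')\to\scK^{q}(X')\oplus\scK^{q-1}(X'')$ is $(s,u)\mapsto(s|_{X'},u)$, which \emph{is} surjective precisely because the $\scK^{q-1}(X'')$-component is carried along unchanged and the $\scK^{q-1}(X')$-slot is absent on both sides — wait, that is not right either. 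The correct statement, matching \eqref{sexactreld} and \eqref{sexactcomemb}, is that the kernel of $i^{-1}$ consists of pairs whose $\scK^{\bullet}(X)$-component restricts to $0$ on $X'$... and here is where the \v{C}ech device of replacing $\scK^{\bullet}(X)$ by $\scK^{\bullet}(\V)$ with a genuine neighborhood $V_{1}$ of $S$ is what makes surjectivity hold; so I would phrase the short exact sequence using the $(\V,\V',\V'')$ formulation of Section~\ref{seccoffine} and quote \eqref{lexacthyp} directly.

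So the cleanest route: take a covering $\W=\{W_0,W_1,W_2\}$ of $X$ with $\W'=\{W_0,W_1\}$ covering $X'$, $\W''=\{W_0\}$ covering $X''$ — this is exactly \textbf{case III} in Subsection~\ref{sscech}. Then \eqref{lexacthyp} for the triple $(\W,\W',\W'')$ gives the long exact sequence
\[
\cdots\lra H^{q-1}_{D_{\scK}}(\W',\W'')\overset{\delta}\lra H^{q}_{D_{\scK}}(\W,\W')\overset{j^{-1}}\lra H^{q}_{D_{\scK}}(\W,\W'')\overset{i^{-1}}\lra H^{q}_{D_{\scK}}(\W',\W'')\lra\cdots,
\]
and I would then identify each term with the corresponding relative cohomology of sections: $H^{q}_{D_{\scK}}(\W,\W'')\cong H^{q}_{D_{\scK}}(X,X'')$ by choosing $W_1$, $W_2$ appropriately (e.g.\ $W_1=W_2=X$) and invoking Proposition~\ref{propsp} or the independence-of-$V_1$ remark, similarly $H^{q}_{D_{\scK}}(\W,\W')\cong H^{q}_{D_{\scK}}(X,X')$ and $H^{q}_{D_{\scK}}(\W',\W'')\cong H^{q}_{D_{\scK}}(X',X'')$. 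The description of $\delta$ is then read off from the explicit formula already given at the end of \textbf{case III}: it sends the class of $(\t_1,\t_{01})$ in $H^{q-1}_{D_{\scK}}(\W',\W'')$ to the class of $(0,0,-\t_1,\t_{01})$ restricted to $W_2$. The main obstacle is bookkeeping: verifying that the three identifications are compatible with the maps $i^{-1}$, $j^{-1}$ in the sequence (so that the diagram of long exact sequences commutes and one may transport the sequence), and pinning down signs in $\delta$; this is routine given Proposition~\ref{propsp}, Remark~\ref{remtwo}, and the case III computation, but it is where all the care goes.
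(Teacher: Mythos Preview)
Your final approach is the paper's: take the three-set covering of case~III in Subsection~\ref{sscech} and apply \eqref{lexacthyp} to the triple $(\W,\W',\W'')$, then identify each term with the corresponding $H^{q}_{D_{\scK}}$. Two points need correcting.

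The choice $W_1=W_2=X$ does not work: $\W'=\{W_0,W_1\}$ must be a covering of $X'$, and with $W_1=X$ it covers all of $X$. The paper takes $W_0=X''$, $W_1=X'$, $W_2=X$; then $\W'=\{X'',X'\}$ is exactly the star-covering $\V^{\star}$ for the pair $(X',X'')$, so $H^{q}_{D_{\scK}}(\W',\W'')=H^{q}_{D_{\scK}}(X',X'')$ holds \emph{by definition} and needs no argument.

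The other two identifications, $H^{q}_{D_{\scK}}(\W,\W')\simeq H^{q}_{D_{\scK}}(X,X')$ and $H^{q}_{D_{\scK}}(\W,\W'')\simeq H^{q}_{D_{\scK}}(X,X'')$, are not immediate from Proposition~\ref{propsp}, which only gives the absolute isomorphisms $H^{q}_{D_{\scK}}(\W)\simeq H^{q}_{d_{\scK}}(X)$ and $H^{q}_{D_{\scK}}(\W')\simeq H^{q}_{d_{\scK}}(X')$. The paper obtains each relative identification by a five-lemma argument: restriction defines a morphism $\scK^{\bullet}(\{X',X\},\{X'\})\to\scK^{\bullet}(\W,\W')$, and one compares \eqref{lexacthyp} for $(\W,\W',\emptyset)$ against \eqref{lexactrelD2}, with Proposition~\ref{propsp} supplying the isomorphisms on the flanking terms. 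You cannot invoke ``independence of $V_1$'' here: Proposition~\ref{propuni} and Corollary~\ref{corunique} are stated only for complexes of \emph{fine} sheaves and appear after Proposition~\ref{proptriplerd} in the logical order, whereas Proposition~\ref{proptriplerd} is asserted for an arbitrary complex of sheaves.
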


\begin{proof} We show that the above sequence is obtained by setting $\W=\{X'',X',X\}$, $\W'=\{X'',X'\}$ and $\W''=\{X''\}$ in  \eqref{lexacthyp}. 

First we have $H^{q}(\W',\W'';\scK^{\bullet})=H^{q}_{D_{\scK}}(X',X'')$ by definition. 
Second, applying \eqref{lexacthyp}  to
the triple $(\W,\W',\emptyset)$, we have the exact sequence
\[
\cdots\lra H^{q-1}(\W';\scK^{\bullet})\overset{\delta}\lra H^{q}(\W,\W';\scK^{\bullet})
\overset{j^{-1}}\lra H^{q}(\W;\scK^{\bullet})\overset{i^{-1}}\lra H^{q}(\W';\scK^{\bullet})\lra\cdots.
\]
By Proposition \ref{propsp}, $H^{q}(\W;\scK^{\bullet})\overset\sim\leftarrow H^{q}_{d_{\scK}}(X)$ and 
$H^{q}(\W';\scK^{\bullet})\overset\sim\leftarrow H^{q}_{d_{\scK}}(X')$. If we set $\V=\{X',X\}$
and $\V'=\{X'\}$, the restriction induces a morphism of complexes 
$\scK^{\bullet}(\V,\V')\ra \scK^{\bullet}(\W,\W')$, which in turn induces a morphism 
$H^{q}_{D_{\scK}}(X,X')\ra H^{q}(\W,\W';\scK^{\bullet})$. Comparing the above sequence with
\eqref{lexactrelD2} and using the five lemma, we see that $H^{q}_{D_{\scK}}(X,X')\overset\sim\ra H^{q}(\W,\W';\scK^{\bullet})$.

Third, applying \eqref{lexacthyp}  to
the triple $(\W,\W'',\emptyset)$, we have the exact sequence
\[
\cdots\lra H^{q-1}(\W'';\scK^{\bullet})\overset{\delta}\lra H^{q}(\W,\W'';\scK^{\bullet})
\overset{j^{-1}}\lra H^{q}(\W;\scK^{\bullet})\overset{i^{-1}}\lra H^{q}(\W'';\scK^{\bullet})\lra\cdots.
\]
By Proposition \ref{propsp}, we have \iso s $H^{q}(\W;\scK^{\bullet})\overset\sim\leftarrow H^{q}_{d_{\scK}}(X)$ and 
$H^{q}(\W'';\scK^{\bullet})\overset\sim\leftarrow H^{q}_{d_{\scK}}(X'')$.  If we set $\V=\{X'',X\}$
and $\V''=\{X''\}$, the restriction induces a morphism of complexes 
$\scK^{\bullet}(\V,\V'')\ra \scK^{\bullet}(\W,\W'')$, which in turn induces a morphism 
$H^{q}_{D_{\scK}}(X,X'')\ra H^{q}(\W,\W'';\scK^{\bullet})$. Comparing the above sequence with
\eqref{lexactrelD2} and using the five lemma, we see that $H^{q}_{D_{\scK}}(X,X'')\overset\sim\ra H^{q}(\W,\W'';\scK^{\bullet})$.
\end{proof}

We note that by \eqref{rel=cmc},
we may rephrase Theorem \ref{thdrelsoft} as\,:
\begin{theorem}\label{thdrelsoft2} Let $(X,X')$ be a paracompact pair and $\scS$ a sheaf on $X$. 
Then,
for any soft resolution $0\ra\scS\ra\scK^{\bullet}$
 \st\ each $\scK^{q}|_{X'}$ is soft, 
there is a canonical \iso\,{\rm : }
\[
H^{q}_{D_{\scK}}(X,X')\simeq H^{q}(X,X';\scS).
\]
\end{theorem}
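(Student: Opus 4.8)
The plan is to observe that Theorem~\ref{thdrelsoft2} is a pure reformulation of Theorem~\ref{thdrelsoft} obtained by transporting it through the identification~\eqref{rel=cmc}. First I would recall how $H^{q}_{D_{\scK}}(X,X')$ was defined: with $V_{0}=X'$ and $V^{\star}_{1}=X$ one has $V_{01}=V_{0}\cap V^{\star}_{1}=X'$, so a cochain in $\scK^{q}(\V^{\star},\V')=\scK^{q}(V^{\star}_{1})\oplus\scK^{q-1}(V_{01})$ is exactly a pair $(\xi_{1},\xi_{01})\in\scK^{q}(X)\oplus\scK^{q-1}(X')$, and the differential $D(\xi_{1},\xi_{01})=(d\xi_{1},\xi_{1}-d\xi_{01})$ agrees verbatim with $d(s,t)=(ds,i^{-1}s-dt)$ under the correspondence $s=\xi_{1}$, $t=\xi_{01}$. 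Hence $\scK^{\bullet}(\V^{\star},\V')=\scK^{\bullet}(i)$ \emph{as complexes}, and therefore $H^{q}_{D_{\scK}}(X,X')=H^{q}_{d_{\scK}}(i)$; this is precisely the content of~\eqref{rel=cmc}, already recorded in Subsection~\ref{ssoemb} and in the present section.

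Next I would simply invoke Theorem~\ref{thdrelsoft}: under the stated hypotheses — $(X,X')$ a paracompact pair and $0\ra\scS\ra\scK^{\bullet}$ a soft resolution with each $\scK^{q}|_{X'}$ soft — that theorem supplies a canonical isomorphism $H^{q}_{d_{\scK}}(i)\simeq H^{q}(X,X';\scS)$. Composing this with the identification of the previous paragraph gives the asserted canonical isomorphism $H^{q}_{D_{\scK}}(X,X')\simeq H^{q}(X,X';\scS)$, with canonicity inherited directly from Theorem~\ref{thdrelsoft} (whose own content traces back, via the five lemma applied to~\eqref{exactcomemb} against the long exact sequence of Proposition~\ref{propfl}(3), to Theorem~\ref{th} and the vanishing $H^{q_{2}}(X;\scK^{q_{1}})=H^{q_{2}}(X';\scK^{q_{1}})=0$ for soft sheaves on paracompact spaces).

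I do not expect a genuine obstacle here: the substantive work was already done in Theorem~\ref{th} and Theorem~\ref{thdrelsoft}, and all that remains is to recognize $H^{q}_{D_{\scK}}(X,X')$ and $H^{q}_{d_{\scK}}(i)$ as the cohomology of the \emph{same} complex. The only point deserving a line of care is checking that~\eqref{rel=cmc} is an equality of complexes rather than merely a quasi-isomorphism, so that the two cohomology theories — and the canonical isomorphisms out of them — are literally the same, making the restatement legitimate verbatim.
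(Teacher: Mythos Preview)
Your proposal is correct and matches the paper's own treatment exactly: the paper states Theorem~\ref{thdrelsoft2} as a direct rephrasing of Theorem~\ref{thdrelsoft} via the identification~\eqref{rel=cmc}, with no further argument given. Your extra care in verifying that $\scK^{\bullet}(\V^{\star},\V')=\scK^{\bullet}(i)$ is a literal equality of complexes is appropriate and is precisely what justifies the word ``rephrase''.
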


\paragraph{Complexes of fine sheaves\,:}

In the rest of this section, we assume that $X$ is paracompact and that $\scK^{\bullet}$ is a complex of fine sheaves on $X$. 

Let $\V=\{V_{0},V_{1}\}$ be as in the beginning of this section, with $V_{1}$ an arbitrary
open set containing $X\ssm X'$. By Theorem~\ref{thsummary}.\,1, there is a canonical \iso\ $H^{q}_{D_{\scK}}(\V)\simeq H^{q}_{d_{\scK}}(X)$ and in
\eqref{lexactrelD}, 
$j^{-1}$ assigns to the class of
$(\xi_{1},\xi_{01})$ the class of  $(0,\xi_{1},\xi_{01})$ or the class of $\xi_{1}-d(\rho_{0}\xi_{01})$ 
(or the class of $\xi_{1}$ if $V_{1}=X$) (cf. Proposition \ref{propinvtwo} and Remark \ref{remtwo}.\,2).

\begin{proposition}\label{propuni} 
The restriction $\scK^{\bullet}(\V^{\star},\V')\ra \scK^{\bullet}(\V,\V')$ induces an \iso
\[
H_{D_{\scK}}^{q}(X,X')\overset\sim\lra H_{D_{\scK}}^{q}(\V,\V').
\]
\end{proposition}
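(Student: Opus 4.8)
The plan is to exhibit the restriction morphism $r:\scK^{\bullet}(\V^{\star},\V')\ra\scK^{\bullet}(\V,\V')$ as part of a morphism of short exact sequences of complexes and then invoke the five lemma. Recall from \eqref{sexactreld} that for the covering $\V=\{V_{0},V_{1}\}$ of $X$ with $V_{0}=X'$ and $V_{1}\supset X\ssm X'$ we have
\[
0\lra\scK^{\bullet}(\V,\V')\overset{j^{-1}}\lra\scK^{\bullet}(\V)\overset{i^{-1}}\lra\scK^{\bullet}(X')\lra 0,
\]
and the analogous sequence for $\V^{\star}=\{V_{0},X\}$, $\V'=\{V_{0}\}$. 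The restriction of sections from $X$ to $V_{1}$ and from $V_{01}^{\star}=X'$ to $V_{01}=V_{0}\cap V_{1}$ (the latter being the identity, since $V_{0}\subset V_{1}$ gives $V_{0}\cap V_{1}=V_{0}=X'$, so $V_{01}=X'=V_{01}^{\star}$; in fact the right half of each triple is unchanged) induces compatible morphisms of complexes $\scK^{\bullet}(\V^{\star})\ra\scK^{\bullet}(\V)$ and $r:\scK^{\bullet}(\V^{\star},\V')\ra\scK^{\bullet}(\V,\V')$, while the third column $\scK^{\bullet}(X')$ is the same in both sequences, the vertical map being the identity. One checks directly from the formulas $j^{-1}(\xi_{1},\xi_{01})=(0,\xi_{1},\xi_{01})$ and $i^{-1}(\xi_{0},\xi_{1},\xi_{01})=\xi_{0}$ that the resulting ladder commutes.

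Passing to cohomology, we obtain a morphism of the two long exact sequences of the form \eqref{lexactrelD}, with the identity on the $H^{q}_{d_{\scK}}(X')$ terms. It therefore suffices, by the five lemma, to show that the middle vertical map $H^{q}_{D_{\scK}}(\V^{\star})\ra H^{q}_{D_{\scK}}(\V)$ is an \iso\ for every $q$. But both of these are identified with $H^{q}_{d_{\scK}}(X)$: for $\V^{\star}$ this is Proposition~\ref{propsp} (since $X\in\V^{\star}$), and for $\V$ it is Theorem~\ref{thsummary}.\,1 (since $X$ is paracompact and $\scK^{\bullet}$ is a complex of fine sheaves, so $\W=\V$ is admissible there), giving the canonical \iso\ $H^{q}_{D_{\scK}}(\V)\simeq H^{q}_{d_{\scK}}(X)$. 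The point to verify is that the composite $H^{q}_{d_{\scK}}(X)\overset\sim\ra H^{q}_{D_{\scK}}(\V^{\star})\ra H^{q}_{D_{\scK}}(\V)$ agrees with the canonical \iso\ $H^{q}_{d_{\scK}}(X)\overset\sim\ra H^{q}_{D_{\scK}}(\V)$; this follows because both are induced by the inclusion $\scK^{q}(X)\hra\scK^{q}(\V)$ sending $s\mapsto(s|_{V_{0}},s|_{V_{1}},0)$ (compare the description of $\varphi_{\W}$ in \eqref{firstmor} and Proposition~\ref{propinvtwo}, Remark~\ref{remtwo}.\,2), which factors through $\scK^{q}(\V^{\star})$ via $s\mapsto(s|_{V_{0}},s,0)$ followed by restriction. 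Hence the middle map is an \iso, and the five lemma gives the claim.

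The main obstacle is the bookkeeping in the last step: one must make sure the two identifications of the middle terms with $H^{q}_{d_{\scK}}(X)$ are genuinely compatible with the restriction morphism, i.e. that the triangle
\[
\xymatrix@C=.6cm@R=.5cm{
& H^{q}_{d_{\scK}}(X)\ar[dl]_-{\sim}\ar[dr]^-{\sim} & \\
H^{q}_{D_{\scK}}(\V^{\star})\ar[rr] & & H^{q}_{D_{\scK}}(\V)
}
\]
commutes. Everything else is the routine verification that the ladder of short exact sequences commutes and an application of the five lemma. Note in passing that since the right-hand halves $\scK^{q-1}(V_{01})$ coincide for $\V^{\star}$ and $\V$ (as $V_{0}\subset V_{1}$ forces $V_{01}=X'$), the subcomplexes $\scK^{\bullet}(\V^{\star},\V')$ and $\scK^{\bullet}(\V,\V')$ differ only in the $\scK^{q}(X)$ versus $\scK^{q}(V_{1})$ summand, so $r$ is simply "restrict the first component," which makes the compatibility checks transparent.
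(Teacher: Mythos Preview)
Your approach is essentially the paper's own: compare the long exact sequences \eqref{lexactrelD} and \eqref{lexactrelD2} via the restriction ladder, use Theorem~\ref{thsummary}.\,1 to identify the middle terms with $H^{q}_{d_{\scK}}(X)$, and apply the five lemma. The compatibility of the two identifications with the restriction map is exactly the point that needs checking, and your verification via the inclusion $s\mapsto (s|_{V_0},s|_{V_1},0)$ is correct.

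There is, however, a factual slip: you assert $V_0\subset V_1$, so that $V_{01}=X'$. This is not assumed and is false in general; $V_1$ is merely an open neighbourhood of $S=X\ssm X'$, so $V_{01}=X'\cap V_1$ may be a proper subset of $X'$. Consequently the restriction $r$ is \emph{not} ``restrict the first component only'': on the second summand it is the genuine restriction $\scK^{q-1}(X')\to\scK^{q-1}(V_{01})$. Fortunately this does not damage the argument. The quotient in both short exact sequences is $\scK^{\bullet}(\V')=\scK^{\bullet}(X')$ (because $i^{-1}$ picks out the $\xi_0$-component, which lives on $V_0=X'$ in both cases), and the induced map on that quotient is the identity regardless of what happens on $V_{01}$. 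The ladder still commutes, and the five-lemma step goes through unchanged. You should simply delete the parenthetical ``the latter being the identity, since $V_0\subset V_1$\dots'' and the final ``Note in passing'' paragraph.
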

\begin{proof}
Comparing \eqref{lexactrelD} and  \eqref{lexactrelD2}, we have the proposition by the five lemma .
\end{proof}


\begin{corollary}\label{corunique} The cohomology $H_{D_{\scK}}^{q}(\V,\V')$ is uniquely determined modulo canonical \iso s, independently
of the choice of $V_{1}$.
\end{corollary}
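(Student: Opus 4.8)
The plan is to read the statement off Proposition \ref{propuni}. The object $H^{q}_{D_{\scK}}(X,X')=H^{q}_{D_{\scK}}(\V^{\star},\V')$ involves no arbitrary choice, since $V_{1}^{\star}=X$ is fixed. For \emph{every} open set $V_{1}$ containing the closed set $S=X\ssm X'$, writing $V_{0}=X'$, $\V=\{V_{0},V_{1}\}$ and $\V'=\{V_{0}\}$, Proposition \ref{propuni} provides a canonical \iso
\[
r_{V_{1}}:H^{q}_{D_{\scK}}(X,X')\overset\sim\lra H^{q}_{D_{\scK}}(\V,\V'),
\]
induced by the restriction $\scK^{\bullet}(\V^{\star},\V')\ra\scK^{\bullet}(\V,\V')$ of sections. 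Hence for any two choices $V_{1}$ and $\tilde V_{1}$, with associated pairs $(\V,\V')$ and $(\tilde\V,\tilde\V')$, the composite $r_{\tilde V_{1}}\circ r_{V_{1}}^{-1}$ is a canonical \iso\ $H^{q}_{D_{\scK}}(\V,\V')\overset\sim\ra H^{q}_{D_{\scK}}(\tilde\V,\tilde\V')$, which already gives the asserted independence of $V_{1}$ up to \iso.

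To justify the word ``canonical'' I would then verify the usual coherence. When $\tilde V_{1}\subset V_{1}$ one has $\tilde V_{01}=V_{0}\cap\tilde V_{1}\subset V_{0}\cap V_{1}=V_{01}$, so restriction of sections gives a morphism of complexes $\scK^{\bullet}(\V,\V')\ra\scK^{\bullet}(\tilde\V,\tilde\V')$, and the triangle
\[
\scK^{\bullet}(\V^{\star},\V')\lra\scK^{\bullet}(\V,\V')\lra\scK^{\bullet}(\tilde\V,\tilde\V')
\]
commutes, all arrows being restrictions. Passing to cohomology, this says precisely that $r_{\tilde V_{1}}$ equals the restriction map followed by $r_{V_{1}}$, so that the transition \iso\ $r_{\tilde V_{1}}\circ r_{V_{1}}^{-1}$ is the one induced by restriction and the family $\{r_{V_{1}}\}$ is compatible with the restriction maps among the $\scK^{\bullet}(\V,\V')$. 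For two arbitrary $V_{1},\tilde V_{1}$ (not necessarily nested), $V_{1}\cap\tilde V_{1}$ is again an open \nbd\ of $S$, and applying the nested case to $(V_{1},V_{1}\cap\tilde V_{1})$ and to $(\tilde V_{1},V_{1}\cap\tilde V_{1})$ shows the transition \iso s are mutually compatible.

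There is no real obstacle here: the entire content sits in Proposition \ref{propuni}, and what remains is the routine observation that restriction maps compose, so that the $r_{V_{1}}$ and their composites form a coherent system. The only point worth stating explicitly is that $V_{1}^{\star}=X$ serves as a canonical ``largest'' choice through which all comparisons are routed, which is exactly what makes the family canonical rather than merely a collection of isomorphisms.
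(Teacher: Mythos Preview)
Your proposal is correct and follows the same approach as the paper: the corollary is stated immediately after Proposition~\ref{propuni} with no separate proof, the intended argument being exactly that every $H^{q}_{D_{\scK}}(\V,\V')$ is canonically isomorphic to the choice-free object $H^{q}_{D_{\scK}}(X,X')$ via the restriction map of that proposition. Your additional verification of coherence under refinement is more than the paper writes out, but it is the natural elaboration and entirely in the spirit of the statement.
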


\begin{remark} This freedom of choice of $V_{1}$ is one of the advantages of expressing 
$H^{q}_{d_{\scK}}(i)$ as $H_{D_{\scK}}^{q}(X,X')$.
\end{remark}


\begin{proposition}[Excision]\label{excision} Let $S$ be a closed set in $X$. Then, for any open set $V$ in $X$ containing $S$, there is a canonical \iso
\[
H^{q}_{D_{\scK}}(X,X\ssm S)\overset{\sim}{\lra} H^{q}_{D_{\scK}}(V,V\ssm S).
\]
\end{proposition}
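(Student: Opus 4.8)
The plan is to read this excision statement off from the freedom in the choice of the neighbourhood $V_1$, which was the very reason for introducing the relative cohomology for the sections (cf. the remark after Corollary~\ref{corunique}). Write $X'=X\ssm S$, so that $S=X\ssm X'$ is exactly the closed set entering the coverings $\V=\{V_0,V_1\}$, $\V'=\{V_0\}$ with $V_0=X'$ and $V_1$ an arbitrary open set containing $S$, used to compute $H^q_{D_{\scK}}(X,X')=H^q_{D_{\scK}}(\V^{\star},\V')$. First I would choose $V_1=V$; this is legitimate precisely because $V$ is open and contains $S=X\ssm X'$. Proposition~\ref{propuni}, which underlies Corollary~\ref{corunique}, then provides, via restriction of sections, a canonical isomorphism
\[
H^q_{D_{\scK}}(X,X\ssm S)=H^q_{D_{\scK}}(\V^{\star},\V')\overset{\sim}{\lra}H^q_{D_{\scK}}(\V,\V'),\qquad \V=\{X\ssm S,\,V\},\quad \V'=\{X\ssm S\}.
\]

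The second step is to recognise the right-hand side as $H^q_{D_{\scK}}(V,V\ssm S)$. By \eqref{cochrel} the cochain group on the left of this display is
\[
\scK^q(\V,\V')=\scK^q(V)\oplus\scK^{q-1}\big((X\ssm S)\cap V\big)=\scK^q(V)\oplus\scK^{q-1}(V\ssm S),
\]
with differential $D(\xi_1,\xi_{01})=(d\xi_1,\ \xi_1-d\xi_{01})$. On the other hand, applying Definition~\ref{defrelcose} \emph{inside the space $V$} to its open subset $V\ssm S$ amounts to taking $V_0=V\ssm S$, letting $V$ itself play the role of the ambient space (so $V_1^{\star}=V$), and $V_{01}=(V\ssm S)\cap V=V\ssm S$; the resulting complex is exactly the one just displayed. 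Hence the two complexes coincide on the nose, $H^q_{D_{\scK}}(\V,\V')=H^q_{D_{\scK}}(V,V\ssm S)$, and composing with the first step gives the claimed isomorphism. I would emphasise that this second identification is purely formal: Definition~\ref{defrelcose} is a definition, not a theorem, so no paracompactness of $V$ or fineness of $\scK^{\bullet}|_V$ is required for it (although both do hold, the restriction of a fine sheaf to an open subset being fine).

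I do not anticipate a genuine obstacle; the substance is entirely in the first step, and that has already been supplied by Proposition~\ref{propuni}. The only thing to watch is the bookkeeping — confirming that the roles of $V_0$, $V_1$, $V_{01}$ really do match in the two descriptions and that the map realising Proposition~\ref{propuni} in this instance is indeed plain restriction of sections, so that the excision isomorphism is canonical. If desired, one can add that all the identifications invoked are natural, whence the excision isomorphism is compatible with the long exact sequences \eqref{lexactrelD2} attached to $(X,X\ssm S)$ and $(V,V\ssm S)$, and more generally with the triple sequence of Proposition~\ref{proptriplerd}.
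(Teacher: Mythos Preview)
Your proposal is correct and is essentially the paper's own argument: choose $V_1=V$ in the covering $\V=\{X\ssm S,\,V\}$, invoke Proposition~\ref{propuni} to pass from $H^q_{D_{\scK}}(X,X\ssm S)$ to $H^q_{D_{\scK}}(\V,\V')$, and then observe that the complex $\scK^{\bullet}(\V,\V')$ is literally the defining complex for $H^q_{D_{\scK}}(V,V\ssm S)$. The paper merely phrases the second step via an auxiliary covering $\V_1=\{V\ssm S,\,V\}$ of $V$ and leaves the appeal to Proposition~\ref{propuni} implicit, but the content is identical.
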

\begin{proof}
 We denote by $\V$ the covering of $X$ consisting of $V_{0}=X\ssm S$ and $V_{1}=V$ and by $\V_{1}$ the covering of $V$ consisting of $V\ssm S$ and $V$.
Then we may identify $\scK^{q}(\V,\{V_{0}\})$ and $\scK^{q}(\V_{1},\{V\ssm S\})$. Thus we have
$H_{D_{\scK}}^{q}(\V,\{V_{0}\})=H_{D_{\scK}}^{q}(\V_{1},\{V\ssm S\})\simeq H^{q}_{D_{\scK}}(V,V\ssm S)$.
\end{proof}

Now we give an alternative proof of Theorem \ref{thdrelsoft}  for fine resolutions.
Let $\W=\{W_{\a}\}_{\a\in I}$ be a covering of $X$ and $\W'=\{W_{\a}\}_{\a\in I'}$ a covering of $X'$, $I'\subset I$.
Letting $V^{\star}_{1}=X$ as before, we define a morphism
\[
\varphi:\scK^{q}(\V^{\star},\V')\lra C^{0}(\W,\W';\scK^{q})\oplus C^{1}(\W,\W';\scK^{q-1})\subset \scK^{q}(\W,\W')
\]
by setting, for $\xi=(\xi_{1},\xi_{01})$,
\[
\varphi(\xi)_{\a}=\begin{cases} 0\quad &\a\in I'\\
                                                           \xi_{1}|_{W_{\a}}& \a\in I\ssm I',
\end{cases}\qquad\quad
\varphi(\xi)_{\a\b}=\begin{cases} \xi_{01}|_{W_{\a\b}}\quad &\a\in I', \  \b\in I\ssm I'\\
-\xi_{01}|_{W_{\a\b}}\quad &\a\in I\ssm I', \  \b\in I'\\
                                                           0& \text{otherwise}.
\end{cases}
\]

\begin{theorem}\label{3.2rel} 
Let $(X,X')$ be a  paracompact pair and
 $\scK^{\bullet}$  a complex of fine sheaves on $X$
 \st\ each $\scK^{q}|_{X'}$ is fine. 
Then  the above  morphism $\varphi$ 
induces an \iso
\[
H^{q}_{D_{\scK}}(X,X')\overset{\sim}{\lra}  H^{q}(\W,\W';\scK^{\bullet}).
\]
\end{theorem}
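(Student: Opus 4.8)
The strategy is to show that $\varphi$ fits into a morphism of exact sequences and then apply the five lemma, exactly in the spirit of the proofs of Propositions~\ref{casflasque}, \ref{propuni} and \ref{proptriplerd}. Specifically, I would first record the exact sequence \eqref{sexactreld} (with $V_{1}=V^{\star}_{1}=X$), namely
\[
0\lra \scK^{\bullet}(\V^{\star},\V')\overset{j^{-1}}\lra \scK^{\bullet}(\V^{\star})\overset{i^{-1}}\lra \scK^{\bullet}(X')\lra 0,
\]
and then, on the \v Cech side, the exact sequence coming from the triple $(\W,\W',\emptyset)$, which by \eqref{lexacthyp} reads
\[
0\lra \scK^{\bullet}(\W,\W')\lra \scK^{\bullet}(\W)\lra \scK^{\bullet}(\W')\lra 0.
\]
The plan is to check that $\varphi$, together with the morphisms $\scK^{\bullet}(\V^{\star})\ra \scK^{\bullet}(\W)$ and $\scK^{\bullet}(X')\ra\scK^{\bullet}(\W')$ induced by restriction of sections, gives a morphism of short exact sequences of complexes (up to signs one must keep track of), hence a morphism of the associated long exact cohomology sequences.

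\textbf{Key steps.} \emph{Step 1:} Verify that $\varphi$ is a morphism of complexes, i.e.\ $\varphi\circ D_{\scK}=D_{\scK}\circ\varphi$. This is the routine computation using the explicit formula $D(\xi_{1},\xi_{01})=(d\xi_{1},\xi_{1}-d\xi_{01})$ on the source and \eqref{cdrdiff}--\eqref{small} on the target; the sign conventions in the definition of $\varphi(\xi)_{\a\b}$ are precisely what is needed to make the $0$-th and $1$-st components match, and one checks the higher components vanish on both sides. \emph{Step 2:} Identify $\varphi$ on the absolute pieces. Restriction gives $\scK^{\bullet}(\V^{\star})\ra\scK^{\bullet}(\W)$ and $\scK^{\bullet}(X')\ra\scK^{\bullet}(\W')$; by Theorem~\ref{thsummary}.\,1 (with $\W'=\emptyset$) the induced maps $H^{q}_{d_{\scK}}(X)\simeq H^{q}_{D_{\scK}}(\V^{\star})\ra H^{q}(\W;\scK^{\bullet})$ and $H^{q}_{d_{\scK}}(X')\ra H^{q}(\W';\scK^{\bullet})$ are isomorphisms, since $\scK^{\bullet}$ is a complex of fine sheaves on the paracompact space $X$ and each $\scK^{q}|_{X'}$ is fine on the paracompact space $X'$. \emph{Step 3:} Check compatibility of $\varphi$ with $j^{-1}$ and $i^{-1}$ in the two short exact sequences — this is immediate from the definitions, since $\varphi$ sends the pair $(\xi_{1},\xi_{01})$ to a \v Cech cochain whose restriction to $\W'$ vanishes (the components with all indices in $I'$ are $0$), and the absolute restriction map is literally restriction. \emph{Step 4:} Apply the five lemma to the ladder of long exact sequences: since the two outer vertical maps (the absolute ones) are isomorphisms by Step 2, the middle vertical map $H^{q}_{D_{\scK}}(X,X')\ra H^{q}(\W,\W';\scK^{\bullet})$ is an isomorphism, where one must note, as in the proof of Proposition~\ref{casflasque}, that the square involving the connecting homomorphism commutes only up to sign, which does not affect the applicability of the five lemma.

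\textbf{Main obstacle.} The genuinely delicate point is Step 1 together with the sign bookkeeping in Step 4: one must verify that the asymmetric sign choice in the off-diagonal part of $\varphi$ (namely $+\xi_{01}$ when $\a\in I'$, $\b\notin I'$ and $-\xi_{01}$ when $\a\notin I'$, $\b\in I'$) is exactly right so that $\varphi$ commutes with the differentials and so that the resulting square with the connecting maps commutes up to a harmless sign. Everything else — exactness of the two sequences, fineness hypotheses delivering the isomorphisms on absolute cohomology via Theorem~\ref{thsummary}.\,1, and the five-lemma conclusion — is structurally the same as in the already-proven propositions. I would also remark, as the excerpt does elsewhere, that by Proposition~\ref{propuni} the left-hand side may equally well be computed with any $V_{1}\supset X\ssm X'$, so the theorem does not in fact depend on the choice $V^{\star}_{1}=X$ made to define $\varphi$.
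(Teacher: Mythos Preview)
Your proposal is correct and follows essentially the same route as the paper: the paper explicitly defines the absolute morphisms $\psi:\scK^{\bullet}(\V^{\star})\ra\scK^{\bullet}(\W)$ (this is precisely the refinement map you call ``restriction'') and $\chi:\scK^{\bullet}(V_{0})\ra\scK^{\bullet}(\W')$, checks that the resulting diagram of short exact sequences commutes at the chain level, shows $\chi$ and $\psi$ are quasi-isomorphisms via Theorem~\ref{thsummary}.\,1 (for $\psi$, via the commutative triangle with $\scK^{\bullet}(X)$), and concludes by the five lemma. One small simplification: because the paper's diagram of short exact sequences commutes on the nose at the chain level, the induced ladder of long exact sequences commutes without any sign discrepancy at the connecting morphism, so your Step~4 worry is unnecessary here.
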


\begin{proof} We define a morphism
\[
\psi:\scK^{q}(\V^{\star})\lra C^{0}(\W;\scK^{q})\oplus C^{1}(\W;\scK^{q-1})\subset \scK^{q}(\W)
\]
by setting, for $\xi=(\xi_{0},\xi_{1},\xi_{01})$,
\[
\psi(\xi)_{\a}=\begin{cases} \xi_{0}|_{W_{\a}}\quad &\a\in I'\\
                                                           \xi_{1}|_{W_{\a}}& \a\in I\ssm I',
\end{cases}
\]
and defining $\psi(\xi)_{\a\b}$ similarly as for $\varphi(\xi)_{\a\b}$. We also define $\chi:\scK^{q}(V_{0})\ra
\scK^{q}(\W')$ by setting $\chi(\xi_{0})_{\a}=\xi_{0}|_{W_{\a}}$ for $\a\in I'$.
Then we have the following commutative 
diagram with exact rows\,:
\[
\SelectTips{cm}{}
\xymatrix
@C=.7cm
@R=.7cm
{0\ar[r]&\scK^{q}(\V^{\star},\V')\ar[r]\ar[d]^-{\varphi}& \scK^{q}(\V^{\star})\ar[d]^-{\psi}\ar[r]&\scK^{q}(V_{0})\ar[r]\ar[d]^-{\chi}&0\\
 0\ar[r]&\scK^{q}(\W,\W')\ar[r] & \scK^{q}(\W)\ar[r]&\scK^{q}(\W')\ar[r]&0.}
\]
 It is not difficult to see that each of the vertical morphisms  is compatible with the differentials so that we have morphisms,
 which we denote by the same letters
\[
\begin{aligned}
&\varphi:H^{q}_{D_{\scK}}(\V^{\star},\V')\lra H^{q}(\W,\W';\scK^{\bullet}),\\
&\psi:H^{q}_{D_{\scK}}(\V^{\star})\lra H^{q}(\W;\scK^{\bullet}),\qquad \chi:H^{q}_{d_{\scK}}(V_{0})\lra H^{q}(\W';\scK^{\bullet}).
\end{aligned}
\]
By Theorem \ref{thsummary}.\,1,
$\chi$ is an \iso. We also see that $\psi$ is an \iso\ by considering the commutative triangle
\[
\SelectTips{cm}{}
\xymatrix
@C=.6cm
@R=.6cm
{\scK^{q}(X) \ar[r] \ar[d]& \scK^{q}(\V^{\star}) 
\ar[ld]^{\psi}
\\
\scK^{q}(\W)&{}}
\]
and using Theorem \ref{thsummary}.\,1.
Then the theorem follows from \eqref{lexacthyp}
with $\W''=\emptyset$,  \eqref{lexactrelD2}
and the five lemma.
\end{proof}

Using the above we have an alternative proof of the  relative de~Rham type theorem  for fine resolutions
(cf. Theorems \ref{thdrelsoft} and \ref{thdrelsoft2})\,:

\begin{theorem}\label{thdrel} 
Let $(X,X')$ be a paracompact pair and $\scS$ a sheaf on $X$. 
Then,
for any fine resolution $0\ra\scS\ra\scK^{\bullet}$
 \st\ each $\scK^{q}|_{X'}$ is fine,
there is a canonical \iso\,{\rm : }
\[
H^{q}_{D_{\scK}}(X,X')\simeq H^{q}(X,X';\scS).
\]
\end{theorem}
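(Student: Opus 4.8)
The plan is to exploit Theorem~\ref{3.2rel} to replace $H^{q}_{D_{\scK}}(X,X')$ by the \v{C}ech hypercohomology $H^{q}(\W,\W';\scK^{\bullet})$ of an arbitrary pair of coverings, and then to identify the latter with $H^{q}(X,X';\scS)$ by comparing $\scK^{\bullet}$ with a flabby resolution. Since $\scK^{\bullet}$ is a complex of fine sheaves with each $\scK^{q}|_{X'}$ fine and $(X,X')$ is a paracompact pair, Theorem~\ref{3.2rel} gives, for any pair of coverings $(\W,\W')$ of $(X,X')$, a canonical isomorphism $H^{q}_{D_{\scK}}(X,X')\overset\sim\ra H^{q}(\W,\W';\scK^{\bullet})$. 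So it suffices to produce a canonical isomorphism $H^{q}(\W,\W';\scK^{\bullet})\simeq H^{q}(X,X';\scS)$ for one such pair.

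For this I would choose, by Proposition~\ref{propexistfl}.\,2, a flabby resolution $0\ra\scS\ra\scF^{\bullet}$ together with a morphism $\kappa\colon\scK^{\bullet}\ra\scF^{\bullet}$ compatible with the augmentations (for instance $\scF^{\bullet}=\scC(\scK)^{\bullet}$). On \v{C}ech cochains $\kappa$ induces $\kappa\colon\scK^{\bullet}(\W,\W')\ra\scF^{\bullet}(\W,\W')$, hence a morphism $\kappa_{*}\colon H^{q}(\W,\W';\scK^{\bullet})\ra H^{q}(\W,\W';\scF^{\bullet})$. On the flabby side, since each $\scF^{q}$ is flabby the relative \v{C}ech cohomology $H^{q_{2}}(\W,\W';\scF^{q_{1}})$ vanishes for $q_{2}\ge 1$ — this reduces, via the long exact sequence \eqref{lexacthyp} for $(\W,\W',\emptyset)$ and the surjectivity of $\scF^{q_{1}}(X)\ra\scF^{q_{1}}(X')$, to the absolute vanishing $H^{q_{2}}(\W;\scF^{q_{1}})=0$, which is standard for flabby sheaves — so by Proposition~\ref{firstiso} the inclusion of $\scF^{\bullet}(X,X')$ induces $H^{q}_{d_{\scF}}(X,X')\overset\sim\ra H^{q}(\W,\W';\scF^{\bullet})$, and $H^{q}_{d_{\scF}}(X,X')\simeq H^{q}(X,X';\scS)$ by Corollary~\ref{corfl}. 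It then remains to show that $\kappa_{*}$ is itself an isomorphism: applying the five lemma to the morphism of long exact sequences \eqref{lexacthyp} (for $(\W,\W',\emptyset)$) induced by $\kappa$, this reduces to the absolute statement over $X$ and over $X'$, where Theorem~\ref{thsummary}.\,1 and Proposition~\ref{firstiso} identify $H^{q}(\W;\scK^{\bullet})$ with $H^{q}_{d_{\scK}}(X)$ and $H^{q}(\W;\scF^{\bullet})$ with $H^{q}_{d_{\scF}}(X)$; under these identifications $\kappa_{*}$ becomes the map $H^{q}_{d_{\scK}}(X)\ra H^{q}_{d_{\scF}}(X)$, which is an isomorphism because both groups are canonically $H^{q}(X;\scS)$ — by the de~Rham type theorem~\ref{thdRtypesoft} and by Corollary~\ref{corfl} respectively — compatibly with $\kappa$, and the argument over $X'$ is identical, using that $\scK^{q}|_{X'}$ is fine (hence soft) and $\scF^{q}|_{X'}$ is flabby. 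Composing, $H^{q}_{D_{\scK}}(X,X')\simeq H^{q}(\W,\W';\scK^{\bullet})\simeq H^{q}(\W,\W';\scF^{\bullet})\simeq H^{q}_{d_{\scF}}(X,X')\simeq H^{q}(X,X';\scS)$, which is the asserted canonical isomorphism.

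The hard part will be the last step, the five-lemma reduction to the absolute case: one must set up the ladder between the two copies of \eqref{lexacthyp} carefully and verify commutativity, allowing — as in the proof of Theorem~\ref{th} — one square to be merely anti-commutative, which is harmless; the ancillary vanishing $H^{q_{2}}(\W,\W';\scF^{q_{1}})=0$ for flabby $\scF^{q_{1}}$ should also be recorded even though it is routine. Everything else is formal bookkeeping. I note finally that, a fine sheaf being soft, the statement is in any case already contained in Theorem~\ref{thdrelsoft2}, and one can equally deduce it from Theorem~\ref{th} via the identity $H^{q}_{D_{\scK}}(X,X')=H^{q}_{d_{\scK}}(i)$ of \eqref{rel=cmc}, since fineness forces $H^{q_{2}}(X;\scK^{q_{1}})=H^{q_{2}}(X';\scK^{q_{1}})=0$ for $q_{2}\ge 1$ on the paracompact spaces $X$ and $X'$; the merit of the \v{C}ech argument sketched above is that it is self-contained and keeps the representing cocycles explicit.
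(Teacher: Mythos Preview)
Your argument is correct, and it shares its opening move with the paper: both invoke Theorem~\ref{3.2rel} to pass from $H^{q}_{D_{\scK}}(X,X')$ to $H^{q}(\W,\W';\scK^{\bullet})$ for an arbitrary pair of coverings. From there the routes diverge. The paper composes the morphism \eqref{secondmor} with the inverse of the isomorphism from Theorem~\ref{3.2rel} to get $H^{q}(\W,\W';\scS)\ra H^{q}_{D_{\scK}}(X,X')$, passes to the direct limit over coverings, and then uses Proposition~\ref{proppara} (the identification $\check H^{q}(X,X';\scS)\simeq H^{q}(X,X';\scS)$ on paracompact pairs) to obtain a morphism $H^{q}(X,X';\scS)\ra H^{q}_{D_{\scK}}(X,X')$; a single application of the five lemma, comparing with the absolute de~Rham type theorem, finishes. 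You instead stay at a fixed covering, compare $\scK^{\bullet}$ with a flabby resolution $\scF^{\bullet}$ via $\kappa$, and run the five lemma on the long exact sequences \eqref{lexacthyp} to reduce to the absolute case; the relative vanishing $H^{q_{2}}(\W,\W';\scF^{q_{1}})=0$ then lets you invoke Proposition~\ref{firstiso} on the flabby side.

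What each buys: the paper's proof is shorter and avoids introducing an auxiliary flabby resolution, at the price of appealing to the limit result Proposition~\ref{proppara}. Your proof bypasses the limit entirely (which is exactly the advantage the paper later advertises in Remark~\ref{remgood} for the special case of good coverings, but you achieve it for arbitrary coverings), at the cost of the extra comparison with $\scF^{\bullet}$ and the small lemma on relative \v{C}ech vanishing for flabby sheaves. Your closing remark, that the statement is already subsumed by Theorem~\ref{thdrelsoft2} via \eqref{rel=cmc}, is also correct and worth keeping: the point of this second proof in the paper is precisely to give a \v{C}ech-theoretic alternative, and your version serves that purpose as well.
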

\begin{proof} Let $(\W,\W')$ be a pair of coverings for $(X,X')$. There is a canonical morphism 
$H^{q}(\W,\W';\scS)\ra H^{q}(\W,\W';\scK^{\bullet})$ (cf. \eqref{secondmor}).
By Theorem \ref{3.2rel} and Proposition~\ref{proppara}, there are canonical morphisms
\[
H^{q}(\W,\W';\scS)\lra H^{q}_{D_{\scK}}(X,X')\quad\text{and}\quad H^{q}(X,X';\scS)\lra H^{q}_{D_{\scK}}(X,X').
\]
In the absolute case the second is an \iso\ (Theorem \ref{thdRtypesoft}). Thus by the five lemma, we have the 
theorem.
\end{proof}

\begin{remark}\label{remgood} In the case $\scK^{\bullet}$ admits a good covering, which usually happens  in the cases we are interested in (cf. Section \ref{secPC}), the theorem follows from Theorems \ref{thsummary} and 
\ref{3.2rel}, without referring to Proposition~\ref{proppara}.
\end{remark}


The sequence in Proposition \ref{proptriplerd} is
compatible with the one in Proposition \ref{propfl} (3)
and the excision of Proposition \ref{excision} is compatible with that of Proposition \ref{propfl} (4), both via the \iso\ of Theorem \ref{thdrel}.  
Also the \iso\ 
is functorial in the following sense\,:

\begin{proposition} Let $(X,X')$ be a paracompact pair.
Suppose we have a commutative diagram
\[
\SelectTips{cm}{}
\xymatrix
@C=.5cm
@R=.5cm
{0\ar[r]&\scS \ar[r]\ar[d]& \scK^{\bullet}\ar[d]
\\
0\ar[r]&\scT\ar[r] &\scL^{\bullet},}
\]
where each row is a fine resolution as in Theorem \ref{thdrel}. Then we have the commutative diagram
\[
\SelectTips{cm}{}
\xymatrix
@C=.4cm
@R=.5cm
{H^{q}(X,X';\scS) \ar@{-}[r]^-{\sim} \ar[d]& H^{q}_{D_{\scK}}(X,X')\ar[d]
\\
H^{q}(X,X';\scT)\ar@{-}[r]^-{\sim} &H^{q}_{D_{\scL}}(X,X'),}
\]
where the vertical morphism on the right is the last one in Proposition~\ref{propfunc} for $\W=\V^{\star}$.
\end{proposition}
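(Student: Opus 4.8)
The plan is to revisit the construction of the isomorphism of Theorem~\ref{thdrel} and to observe that every map occurring in it is natural with respect to a morphism of fine resolutions. Write $f$ for the vertical morphism $\scK^{\bullet}\to\scL^{\bullet}$, and keep the letter $f$ also for the induced morphism $\scS\to\scT$ and for all the maps it induces on sections, \v{C}ech cochains, etc. Throughout I would fix the pair of coverings $\V^{\star}=\{X',X\}$ and $\V'=\{X'\}$ of $(X,X')$, so that $\scK^{q}(\V^{\star},\V')=\scK^{q}(i)$ and $H^{q}_{D_{\scK}}(X,X')=H^{q}(\V^{\star},\V';\scK^{\bullet})$ by Definition~\ref{defrelcose} and \eqref{rel=cmc}; with this choice the right-hand vertical morphism of the diagram to be established is, by construction, precisely the last morphism of Proposition~\ref{propfunc} for $\W=\V^{\star}$. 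For a general pair $(\W,\W')$ of coverings of $(X,X')$ one has likewise the induced morphisms $H^{q}(\W,\W';\scS)\to H^{q}(\W,\W';\scT)$ and $H^{q}(\W,\W';\scK^{\bullet})\to H^{q}(\W,\W';\scL^{\bullet})$ of Proposition~\ref{propfunc}, compatible with $\psi_{(\W,\W')}$ of \eqref{secondmor}.

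First I would record the naturality of the morphism $\varphi$ of Theorem~\ref{3.2rel}. Since $\varphi$ (and the auxiliary morphisms $\psi$ and $\chi$ in that proof) is defined by an explicit formula built solely from restriction of sections and signs, and $f$ commutes with restriction of sections, $f$ intertwines these morphisms for $\scK^{\bullet}$ and for $\scL^{\bullet}$; hence $f$ induces a morphism between the two commutative diagrams of short exact sequences used in the proof of Theorem~\ref{3.2rel}, and between the resulting five-lemma configurations. Next, the canonical comparison morphism $H^{q}(\W,\W';\scS)\to H^{q}(X,X';\scS)$ is natural in the coefficient sheaf, being obtained from the functor $\scC^{\bullet}(-)$ via \eqref{secondmor} and Proposition~\ref{firstiso}; under the paracompactness hypothesis it becomes an isomorphism after passing to the limit (Proposition~\ref{proppara}). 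Finally, the connecting and restriction morphisms in the long exact sequences of Proposition~\ref{proptriplerd} (with $X''=\emptyset$), of \eqref{lexacthyp} (with $\W''=\emptyset$) and of Proposition~\ref{propfl}~(3) (with $X''=\emptyset$) are all natural in the sheaf argument. Consequently $f$ yields a morphism from the entire five-lemma configuration that proves Theorem~\ref{thdrel} for $\scK^{\bullet}$ to the one that proves it for $\scL^{\bullet}$, in which the absolute ($X'=\emptyset$) vertical faces commute by the naturality already present in Theorem~\ref{thdRtypesoft}.

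Putting these together, the isomorphism $H^{q}(X,X';\scS)\overset\sim\lra H^{q}_{D_{\scK}}(X,X')$ of Theorem~\ref{thdrel} is, up to the five-lemma bookkeeping, the composite of $\psi_{(\W,\W')}$, the inverse of the morphism $\varphi$ of Theorem~\ref{3.2rel}, and the inverse of the comparison isomorphism $H^{q}(\W,\W';\scS)\overset\sim\lra H^{q}(X,X';\scS)$, and by the preceding paragraph each of these three maps commutes with the morphisms induced by $f$; hence the square in the statement commutes. The step needing the most care is the naturality of $\varphi$ from Theorem~\ref{3.2rel} and the verification that $f$ induces a genuine morphism of the long exact sequences entering the five-lemma arguments — both routine from the explicit descriptions, but to be made explicit. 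As a shortcut, whenever $\scK^{\bullet}$ (equivalently $\scL^{\bullet}$) admits a good covering $\W$, one may instead invoke Theorems~\ref{thsummary} and \ref{3.2rel} directly, as in Remark~\ref{remgood}: the isomorphism then factors through $H^{q}(\W,\W';\scS)$ and $H^{q}(\W,\W';\scK^{\bullet})$, and commutativity of the square is immediate from Proposition~\ref{propfunc} together with the naturality of $\varphi$, bypassing Proposition~\ref{proppara} altogether.
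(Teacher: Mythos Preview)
The paper states this proposition without proof, treating it as a routine consequence of the constructions; your proposal correctly supplies the details by tracing naturality through each step of the proof of Theorem~\ref{thdrel} (the explicit formula for $\varphi$ in Theorem~\ref{3.2rel}, the comparison map to $H^{q}(X,X';\scS)$, and the five-lemma argument), and the shortcut via good coverings you mention at the end is exactly in the spirit of Remark~\ref{remgood}. Your approach is the expected one and is sound.
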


\begin{remark} 
The sequence
\[
0\lra \scK^{\bullet}(X,X')\lra\scK^{\bullet}(X)\lra\scK^{\bullet}(X')\lra 0
\]
is not exact in general and we may not directly define the relative  cohomology (cf. \eqref{exfl}, \eqref{sexactfl} with $X''=\emptyset$ and  
\eqref{exfine}). However, replacing $\scK^{\bullet}(X)$ by $\scK^{\bullet}(\V)$, we may ``flabbify'' the situation and 
obtain an exact sequence as (\ref{sexactreld}), which allows us to naturally define the relative  cohomology,
as explained in Introduction.
\end{remark}

\lsection{Relation with derived functors}\label{secder}
For generalities on derived categories and functors we refer to \cite{KS}.

\subsection{Category of complexes}\label{sscat}

We start by a brief review of basics on complexes.
Let $\cC$ be an additive category. A complex $K$ in $\cC$ is a collection 
$(K^{q},d^{q}_{K})_{q\in\Z}$, where $K^{q}$ is an object in $\cC$ and $d^{q}_{K}:K^{q}\ra K^{q+1}$  a
morphism with $d^{q+1}\circ d^{q}=0$. A morphism $\varphi:K\ra L$ of complexes is a collection $(\varphi^{q})$ of morphisms $\varphi^{q}:K^{q}\ra L^{q}$ with
$d^{q}_{L}\circ \varphi^{q}=\varphi^{q+1}\circ d^{q}_{K}$. With these the complexes form an additive
category which is denoted by $\sC(\cC)$. We denote a complex $K$ 
also by $K^{\bullet}$
and a morphism  $\varphi$ 
by $\varphi^{\bullet}$.

For a complex $K$ and an integer $k$, we denote by $K[k]$ the
complex with $K[k]^{q}=K^{k+q}$ and $d^{q}_{K[k]}=(-1)^{k}d^{k+q}_{K}$. 
For a morphism $\varphi:K\ra L$, $\varphi[k]:K[k]\ra L[k]$ is defined by $\varphi[k]^{q}=\varphi^{k+q}$.
This way we have an additive
functor $[k]:\sC(\cC)\ra\sC(\cC)$.
Considering an object $K$ in $\cC$ as a complex  given by
$K^{0}=K$, $K^{q}=0$ for $q\ne 0$ and $d^{q}=0$, we may think of $\cC$ as a subcategory of $\sC(\cC)$.
Identifying two morphisms in $\sC(\cC)$ that are ``homotopic'', we have an additive category $\sK(\cC)$. 

Suppose $\cC$ is an Abelian category. For a complex $K$ in $\cC$, its $q$-th cohomology is defined by
\[
H^{q}(K)=\op{Ker}d^{q}_{K}/\op{Im}d^{q-1}_{K}.
\]
Then it gives  additive functors $H^{q}:\sC(\cC)\ra\cC$ and $H^{q}:\sK(\cC)\ra\cC$.

\begin{proposition}\label{proples} Let $0\ra J\overset{\iota}\ra K\overset{\varphi}\ra L\ra 0$ be an exact sequence in $\sC(\cC)$. Then there exists an 
exact sequence
\[
\cdots\lra H^{q-1}(L)\overset\delta\lra H^{q}(J)\overset{\iota}\lra H^{q}(K)\overset{\varphi}\lra H^{q}(L)
\lra\cdots,
\]
where $\iota$ and $\varphi$ denotes $H^{q}(\iota)$ and $H^{q}(\varphi)$, \r, and $\delta$ assigns to the class  of $y\in L^{q-1}$, $d(y)=0$, the class of $z\in J^{q}$ \st\ $\iota(z)=d(x)$ for some $x\in K^{q-1}$ with $\varphi(x)=y$.
\end{proposition}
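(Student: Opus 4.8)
The plan is the standard construction of the connecting morphism $\delta$ followed by a diagram chase for exactness at each of the three positions. In each degree $0\ra J^{q}\overset\iota\ra K^{q}\overset\varphi\ra L^{q}\ra 0$ is short exact in the Abelian category $\cC$, so the usual element-style argument applies, legitimized either by working with members (generalized elements) or by reducing via the embedding theorem to the category of modules over a ring; I will write it as if chasing elements.

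First I would define $\delta$. Represent a class in $H^{q-1}(L)$ by a cocycle $y\in L^{q-1}$. By surjectivity of $\varphi$ choose $x\in K^{q-1}$ with $\varphi(x)=y$; then $\varphi(dx)=d\varphi(x)=dy=0$, so $dx$ lies in $\op{Ker}\varphi=\op{Im}\iota$ and $dx=\iota(z)$ for a unique $z\in J^{q}$ by injectivity of $\iota$. From $\iota(dz)=d\iota(z)=d(dx)=0$ and injectivity of $\iota$ we get $dz=0$, so $z$ is a cocycle, and we set $\delta([y])=[z]$; additivity is clear. For well-definedness, two choices of $x$ over $y$ differ by an element $\iota(w)\in\op{Im}\iota$, and the corresponding elements of $J^{q}$ then differ by $dw$, a coboundary; and replacing $y$ by $y+dy''$ with $y''\in L^{q-2}$ one may replace $x$ by $x+dx''$ with $\varphi(x'')=y''$, which does not change $dx$, hence not $z$. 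Thus $\delta$ is a well-defined homomorphism.

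Next I would check exactness. At $H^{q}(J)$: if $[z]=\delta([y])$ then $\iota(z)=dx$ is a coboundary, so $H^{q}(\iota)[z]=0$; conversely, if $z$ is a cocycle with $\iota(z)=dx$ for some $x\in K^{q-1}$, put $y=\varphi(x)$, whence $dy=\varphi(dx)=\varphi\iota(z)=0$ and $\delta([y])=[z]$. At $H^{q}(K)$: $\varphi\iota=0$ gives $\op{Im}H^{q}(\iota)\subset\op{Ker}H^{q}(\varphi)$; conversely, if $x$ is a cocycle with $\varphi(x)=dy$, lift $y$ to $x'\in K^{q-1}$, so $x-dx'$ is a cocycle with $\varphi(x-dx')=0$, hence $x-dx'=\iota(z)$ for a cocycle $z\in J^{q}$ and $[x]=H^{q}(\iota)[z]$. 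At $H^{q}(L)$: if $[y]=H^{q}(\varphi)[x]$ with $dx=0$, then $x$ itself is an admissible preimage in the construction of $\delta$, so $z=0$ and $\delta([y])=0$; conversely, if $\delta([y])=[z]=0$, write $dx=\iota(z)$ and $z=dw$ with $w\in J^{q-1}$, so that $x-\iota(w)$ is a cocycle with $\varphi(x-\iota(w))=y$ and $[y]=H^{q-1}(\varphi)[x-\iota(w)]$.

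The only real difficulty is bookkeeping; there is no conceptual obstacle, and the one point that warrants a word of care is the validity of element-style diagram chasing in an arbitrary Abelian category, handled by the member calculus (equivalently, the embedding theorem).
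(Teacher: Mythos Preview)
Your proof is correct and is the standard argument. The paper itself states this proposition without proof, treating it as the well-known long exact sequence of cohomology associated to a short exact sequence of complexes; your write-up supplies exactly the routine diagram chase (construction of $\delta$, well-definedness, and exactness at each node) that underlies it, including the appropriate remark about justifying element-chasing in a general Abelian category.
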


\subsection{Co-mapping cones}\label{subseccom}

Let $\cC$ be an 
additive 
category. For a morphism $\varphi:K\ra L$ of $\sC(\cC)$, we define a complex 
$M^{*}(\varphi)$ called
the {\em co-mapping cone} of $\varphi$. We set
\[
M^{*}(\varphi)=K\oplus L[-1]
\]
and define the differential $d:M^{*}(\varphi)^{q}=K^{q}\oplus L^{q-1}\ra M^{*}(\varphi)^{q+1}=K^{q+1}\oplus L^{q}$ by 
\[
d(x,y)=(d_{K}x,\varphi^{q}(x)-d_{L}y).
\]
We define morphisms  $\a^{*}=\a^{*}(\varphi):M^{*}(\varphi)\ra K$ and $\b^{*}=\b^{*}(\varphi):L[-1]\ra M^{*}(\varphi)$
by
\[
\begin{aligned}
&{\a^{*}}:M^{*}(\varphi)^{q}=K^{q}\oplus L^{q-1}\lra K^{q},\qquad (x,y)\mapsto x,\qquad\text{and}\\ 
&\b^{*}:L[-1]^{q}=L^{q-1}\lra M^{*}(\varphi)^{q}=K^{q}\oplus L^{q-1},\qquad y\mapsto (0,y).
\end{aligned}
\]
Then we have a sequence of morphisms
\[
 L[-1]\overset{\b^{*}}\lra M^{*}(\varphi)\overset{\a^{*}}\lra K\overset \varphi\lra L.
\]
We have $\a^{*}\circ\b^{*}=0$ in $\sC(\cC)$. Moreover, we may prove that $\b^{*}\circ \varphi[-1]$ and $\varphi\circ\a^{*}$ are homotopic to $0$ so that $\b^{*}\circ \varphi[-1]$=0 and $\varphi\circ\a^{*}=0$ in $\sK(\cC)$.

A {\em co-triangle} in $\sK(\cC)$ is a sequence of morphisms
\[
L[-1]\lra J\lra K\lra L.
\]
The co-triangle is {\em distinguished} if it is isomorphic to
\[
 L'[-1]\overset{\b^{*}}\lra M^{*}(\varphi)\overset{\a^{*}}\lra {K'}\overset \varphi\lra {L'}
 \]
for some $\varphi$ in $\sC(\cC)$.

Let $\cC$ be an Abelian category and $\varphi:K\ra L$ as above.
Then the sequence
\begin{equationth}\label{sexactcom}
0\lra L[-1]\overset{\b^{*}}\lra M^{*}(\varphi)\overset{\a^{*}}\lra K\lra 0
\end{equationth}
 is exact in $\sC(\cC)$. 
From Proposition \ref{proples}, we have the exact sequence
\begin{equationth}\label{exactcom}
\cdots\lra H^{q-1}(L)\overset{\b^{*}}\lra H^{q}(M^{*}(\varphi))\overset{\a^{*}}\lra H^{q}(K)\overset{\varphi}\lra H^{q}(L)\lra\cdots.
\end{equationth}
Note that $\delta$ is given by $\varphi$.

\begin{proposition}\label{propcoh} 
For any distinguished cotriangle $L[-1]\ra J\ra K\ra L$ in $\sK(\cC)$, there is an
 exact sequence
\[
\cdots\lra H^{q-1}(L)\lra H^{q}(J)\lra H^{q}(K)\lra H^{q}(L)\lra\cdots.
\]
\end{proposition}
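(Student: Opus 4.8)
The plan is to reduce the statement about an arbitrary distinguished cotriangle to the already-established exact sequence \eqref{exactcom}, which was obtained for the specific cotriangle attached to a co-mapping cone. By definition of ``distinguished'', there exist a morphism $\varphi:K'\ra L'$ in $\sC(\cC)$ and an isomorphism in $\sK(\cC)$ between the given cotriangle $L[-1]\ra J\ra K\ra L$ and the cotriangle $L'[-1]\overset{\b^{*}}\lra M^{*}(\varphi)\overset{\a^{*}}\lra K'\overset{\varphi}\lra L'$. So the first step is to unwind what this isomorphism of cotriangles gives us: isomorphisms $L[-1]\simeq L'[-1]$, $J\simeq M^{*}(\varphi)$, $K\simeq K'$, $L\simeq L'$ in $\sK(\cC)$ making the evident squares commute in $\sK(\cC)$.

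Next I would apply the functor $H^{q}:\sK(\cC)\ra\cC$ to the whole picture. Since $H^{q}$ is an additive functor on $\sK(\cC)$ (this is recorded just before Proposition \ref{proples}), it sends isomorphisms in $\sK(\cC)$ to isomorphisms in $\cC$ and commutative squares to commutative squares. Therefore $H^{q}(J)\simeq H^{q}(M^{*}(\varphi))$, $H^{q}(K)\simeq H^{q}(K')$, $H^{q}(L)\simeq H^{q}(L')$, and likewise $H^{q-1}(L)\simeq H^{q-1}(L')$ (using that $H^{q}(L[-1])=H^{q-1}(L)$), all compatibly with the maps induced by the morphisms in the cotriangles. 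Now I invoke \eqref{exactcom}, the exact sequence
\[
\cdots\lra H^{q-1}(L')\overset{\b^{*}}\lra H^{q}(M^{*}(\varphi))\overset{\a^{*}}\lra H^{q}(K')\overset{\varphi}\lra H^{q}(L')\lra\cdots,
\]
and transport it across the isomorphisms just produced. A diagram chase with the five lemma (or simply the observation that a sequence isomorphic to an exact sequence is exact) then yields the desired long exact sequence
\[
\cdots\lra H^{q-1}(L)\lra H^{q}(J)\lra H^{q}(K)\lra H^{q}(L)\lra\cdots.
\]

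I do not expect a serious obstacle here; the content is entirely formal. The one point demanding a little care is bookkeeping: one must check that the comparison isomorphisms are genuinely compatible with \emph{all three} maps in the cotriangle (not just two of them) and with the shift, so that the transported sequence really has the maps induced by $J\ra K$, $K\ra L$, and the connecting map $H^{q-1}(L)\ra H^{q}(J)$. This follows because an isomorphism of cotriangles, by definition, is a commutative (in $\sK(\cC)$) ladder between the two four-term sequences, and $H^{q}$ preserves that commutativity. Once this is noted, the proof is a one-line appeal to \eqref{exactcom} together with functoriality of $H^{q}$ on $\sK(\cC)$.
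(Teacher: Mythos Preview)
Your proposal is correct and follows essentially the same approach as the paper: reduce to the model cotriangle $L'[-1]\overset{\b^{*}}\ra M^{*}(\varphi)\overset{\a^{*}}\ra K'\overset{\varphi}\ra L'$ and invoke \eqref{exactcom}. The paper's proof simply states that it suffices to treat this case and appeals to \eqref{exactcom}; your version spells out the transport along the $\sK(\cC)$-isomorphisms via functoriality of $H^{q}$, which is exactly the implicit justification.
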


\begin{proof} It suffices to consider the case $ L[-1]\overset{\b^{*}}\ra M^{*}(\varphi)\overset{\a^{*}}\ra K\overset \varphi\ra L$. For this, the result follows from \eqref{exactcom}.
\end{proof}

\begin{remark}\label{remgenrel} In \cite{KKK}, a similar complex as $M^{*}(\varphi)$ is considered, except
the differential $d:K^{q}\oplus L^{q-1}\ra K^{q+1}\oplus L^{q}$ is defined by $d(x,y)=(dx,dy+(-1)^{q}\varphi(x))$. Its cohomology is denoted by $H^{q}(K\overset\varphi\ra L)$ and is called the generalized relative cohomology. 
\end{remark}

We finish this subsection by examining the relation between the  co-mapping cone defined above
and the  mapping cone as defined in \cite{KS}. We will see  that the former is  dual to the latter in the sense that, while
the mapping cone is a notion extracted from the complex  of singular chains of the mapping
cone of a continuous map of topological spaces, the co-mapping cone is the one corresponding to the complex of singular cochains of the topological mapping cone. Thus, while the mapping cone is of homological nature, the
co-mapping cone is cohomological.
In this context, we may also think of a cotriangle as a notion dual to a triangle.
\vv

Let $\cC$ be an additive category and $\varphi:K\ra L$  a morphism in $\sC(\cC)$. Recall that the mapping cone $M(\varphi)$ of $\varphi$ is the complex 
\st\
\begin{equationth}\label{mappingc}
M(\varphi)^{q}=K^{q+1}\oplus L^{q}
\end{equationth}
with the differential $d:M(\varphi)^{q}=K^{q+1}\oplus L^{q}\ra M(\varphi)^{q+1}=K^{q+2}\oplus L^{q+1}$ defined by
\[
d(x,y)=(-dx,\varphi(x)+dy).
\]
We define morphisms $\a:L\ra M(\varphi)$ and $\b:M(\varphi)\ra K[1]$ in $\sC(\cC)$ by
\[
\begin{aligned}
&\a:L^{q}\lra M(\varphi)^{q}=K^{q+1}\oplus L^{q},\qquad y\mapsto (0,y),\qquad\text{and}\\
&{\b}:M(\varphi)^{q}=K^{q+1}\oplus L^{q}\lra K[1]^{q}=K^{q+1},\qquad (x,y)\mapsto x.
\end{aligned}
\]


To illustrate the idea, let $\cA$ be the category of Abelian groups. For an object $A$ in $\sC(\cA)$, we set $A_{q}=A^{-q}$ and denote
$d^{-q}:A_{q}\ra A_{q-1}$ by $\partial_{q}$. Then $A[k]_{q}=A[k]^{-q}=A^{-q+k}=A_{q-k}$. Also we denote by $A^{*}$ the complex given by $(A^{*})^{q}=\op{Hom}_{\Z}(A_{q},\Z)=(A_{q})^{*}$ and $d^{q}:(A^{*})^{q}\ra (A^{*})^{q+1}$ the transpose of $\partial_{q}$\,:
\[
\langle d\varphi,a\rangle=\langle \varphi,\partial a\rangle\qquad\text{for}\ \ \varphi\in (A_{q})^{*}\ \text{and}\ a\in A_{q+1},
\]
where $\langle\ ,\ \rangle$ denotes the Kronecker product.

Let $\varphi:B\ra A$ be   a morphism 
 in $\sC(\cA)$. The mapping cone $M(\varphi)$ is given, setting $K=B$ and $L=A$ and reversing the order in the direct sum in \eqref{mappingc}, by
\[
M(\varphi)_{q}=A_{q}\oplus B_{q-1},
\]
with $\partial_{q}:M(\varphi)_{q}=A_{q}\oplus B_{q-1}\ra M(\varphi)_{q-1}=A_{q-1}\oplus B_{q-2}$ given by
\[
\partial(a,b)=(\partial a+\varphi(b),-\partial b).
\]
Let $\varphi^{*}:A^{*}\ra B^{*}$ be the transpose of $\varphi$. Then the co-mapping cone $M^{*}(\varphi^{*})$ is given by
\[
M^{*}(\varphi^{*})^{q}= (A_{q})^{*}\oplus (B_{q-1})^{*}.
\]
with $d:M^{*}(\varphi^{*})^{q}=(A_{q})^{*}\oplus (B_{q-1})^{*}\ra M^{*}(\varphi^{*})^{q+1}=(A_{q+1})^{*}\oplus (B_{q})^{*}$ given by
\[
d(f,g)=(df,\varphi^{*}(f)-dg).
\]

\begin{proposition}\label{proptr} The differential $d:M^{*}(\varphi^{*})^{q}\ra M^{*}(\varphi^{*})^{q+1}$ is the transpose of
the differential $\partial:M(\varphi)_{q}\ra M(\varphi)_{q-1}$.
\end{proposition}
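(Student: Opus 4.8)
The plan is to verify the claimed duality directly by computing the Kronecker product of $d(f,g)$ with an arbitrary element of $M(\varphi)_{q+1}$ and matching it against the pairing of $(f,g)$ with $\partial$ applied to that element. Concretely, I would pick $(a,b)\in M(\varphi)_{q+1}=A_{q+1}\oplus B_{q}$ and $(f,g)\in M^{*}(\varphi^{*})^{q}=(A_{q})^{*}\oplus (B_{q-1})^{*}$, and compute $\langle d(f,g),(a,b)\rangle$ using the bilinear extension of the Kronecker product to direct sums, namely $\langle (u,v),(a,b)\rangle=\langle u,a\rangle+\langle v,b\rangle$.

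The key steps: first, expand $\langle d(f,g),(a,b)\rangle=\langle(df,\varphi^{*}(f)-dg),(a,b)\rangle=\langle df,a\rangle+\langle\varphi^{*}(f),b\rangle-\langle dg,b\rangle$. Next, apply the defining relation for the transpose differential on $A^{*}$ and $B^{*}$, namely $\langle df,a\rangle=\langle f,\partial a\rangle$ and $\langle dg,b\rangle=\langle g,\partial b\rangle$, together with the definition of $\varphi^{*}$ as the transpose of $\varphi$, i.e.\ $\langle\varphi^{*}(f),b\rangle=\langle f,\varphi(b)\rangle$. This turns the expression into $\langle f,\partial a\rangle+\langle f,\varphi(b)\rangle-\langle g,\partial b\rangle=\langle f,\partial a+\varphi(b)\rangle+\langle g,-\partial b\rangle$. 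Finally, recognize the right-hand side as $\langle(f,g),(\partial a+\varphi(b),-\partial b)\rangle=\langle(f,g),\partial(a,b)\rangle$, where the last equality is precisely the stated formula for $\partial$ on $M(\varphi)$. Since this holds for all $(a,b)$, the differential $d$ on $M^{*}(\varphi^{*})$ is the transpose of $\partial$ on $M(\varphi)$, which is the assertion.

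There is no real obstacle here; the proof is a short bookkeeping exercise in unwinding three definitions (the transpose differentials on $A^{*}$ and $B^{*}$, the transpose morphism $\varphi^{*}$, and the mapping-cone differential $\partial$) and reassembling them. The only points requiring a little care are the sign on the $B$-component of $\partial(a,b)$, which matches the $-dg$ term, and consistency of the index shift: elements of $M(\varphi)_{q+1}$ decompose as $A_{q+1}\oplus B_{q}$ while covectors in $M^{*}(\varphi^{*})^{q}$ decompose as $(A_{q})^{*}\oplus (B_{q-1})^{*}$, and one must check that $\partial$ indeed lowers these indices by one so that the pairings are between matched degrees. Once these are lined up, the identity drops out of the linearity of $\langle\ ,\ \rangle$.
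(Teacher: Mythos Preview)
Your proposal is correct and follows essentially the same approach as the paper: both pick $(a,b)\in M(\varphi)_{q+1}$, expand $\langle d(f,g),(a,b)\rangle$ and $\langle (f,g),\partial(a,b)\rangle$ using the definitions of $d$, $\partial$, $\varphi^{*}$, and the transpose differentials, and observe that both reduce to $f(\partial a+\varphi(b))-g(\partial b)$. Your write-up simply makes a few of the intermediate substitutions more explicit.
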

\begin{proof} Take $(a,b)\in M(\varphi)_{q+1}=A_{q+1}\oplus B_{q}$. We have on the one hand
\[
\langle d(f,g), (a,b)\rangle=\langle (df,\varphi^{*}(f)-dg), (a,b)\rangle=f(\partial a+\varphi(b))
-g(\partial b).
\]
On the other hand
\[
\langle (f,g),\partial(a,b)\rangle=\langle  (f,g),(\partial a+\varphi(b),-\partial b)\rangle=f(\partial a+\varphi(b))
-g(\partial b).
\]
\end{proof}

The morphisms $\a:A\ra M(\varphi)$ and $\b:M(\varphi)\ra B[1]$ are given by
\[
\begin{aligned}
&\a:A_{q}\lra M(\varphi)_{q}=A_{q}\oplus B_{q-1},\qquad a\mapsto (a,0)\\
&{\b}:M(\varphi)_{q}=A_{q}\oplus B_{q-1}\lra B[1]_{q}=B_{q-1},\qquad (a,b)\mapsto b.
\end{aligned}
\]
While  the morphisms $\a^{*}:M^{*}(\varphi^{*})\ra A^{*}$ and $\b^{*}:B^{*}[-1]\ra M^{*}(\varphi^{*})$   are given by
\[
\begin{aligned}
&{\a^{*}}:M^{*}(\varphi^{*})^{q}= (A^{*})^{q}\oplus (B^{*})^{q-1}\lra (A^{*})^{q},\qquad (f,g)\mapsto f\\
&\b^{*}:B^{*}[-1]^{q}=(B^{*})^{q-1}\lra M^{*}(\varphi^{*})^{q}= (A^{*})^{q}\oplus (B^{*})^{q-1},\qquad g\mapsto (0,g).
\end{aligned}
\]

By direct computations as in the proof of Proposition \ref{proptr}, we have\,:
\begin{proposition} The morphisms $\a^{*}$ and $\b^{*}$ are the transposes of $\a$ and $\b$, \r.
\end{proposition}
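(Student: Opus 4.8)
The plan is to mimic the proof of Proposition~\ref{proptr}: pair both sides of each claimed identity against a general element and check that the Kronecker products agree. Since $\a$ and $\b$ are given by very simple formulas — an inclusion of a direct summand and a projection onto a direct summand — no homotopy or spectral sequence argument is needed; everything reduces to unwinding definitions.

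First I would treat $\a^{*}$. Recall $\a:A_{q}\to M(\varphi)_{q}=A_{q}\oplus B_{q-1}$, $a\mapsto(a,0)$, and $\a^{*}:M^{*}(\varphi^{*})^{q}=(A_{q})^{*}\oplus(B_{q-1})^{*}\to(A_{q})^{*}$, $(f,g)\mapsto f$. Taking $(f,g)\in M^{*}(\varphi^{*})^{q}$ and $a\in A_{q}$, one has $\langle\a^{*}(f,g),a\rangle=\langle f,a\rangle$ on one side, and $\langle(f,g),\a(a)\rangle=\langle(f,g),(a,0)\rangle=\langle f,a\rangle+\langle g,0\rangle=\langle f,a\rangle$ on the other; hence $\a^{*}$ is the transpose of $\a$. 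For $\b^{*}$: here $\b:M(\varphi)_{q}=A_{q}\oplus B_{q-1}\to B[1]_{q}=B_{q-1}$, $(a,b)\mapsto b$, while $\b^{*}:B^{*}[-1]^{q}=(B_{q-1})^{*}\to M^{*}(\varphi^{*})^{q}=(A_{q})^{*}\oplus(B_{q-1})^{*}$, $g\mapsto(0,g)$. For $g\in(B_{q-1})^{*}$ and $(a,b)\in M(\varphi)_{q}$, compute $\langle\b^{*}(g),(a,b)\rangle=\langle(0,g),(a,b)\rangle=\langle g,b\rangle=\langle g,\b(a,b)\rangle$, which gives the second assertion.

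There is essentially no obstacle here; the only thing to watch is the indexing conventions — the identification $A_{q}=A^{-q}$, the shifts $[\pm1]$, and the reversal of the order of the direct summands when passing from $M(\varphi)^{q}$ to $M(\varphi)_{q}$ — since that is exactly the place where one could slip a wrong index or a stray sign. Once the degrees are matched up so that $B^{*}[-1]^{q}$ is seen to be dual to $B[1]_{q}$ and $M^{*}(\varphi^{*})^{q}$ dual to $M(\varphi)_{q}$, both transposition identities are immediate.
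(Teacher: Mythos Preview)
Your proof is correct and follows exactly the approach the paper indicates: the paper simply says ``By direct computations as in the proof of Proposition~\ref{proptr}'' without writing them out, and what you have done is precisely those computations, pairing each side against a general element and checking the Kronecker products agree.
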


\subsection{Derived categories and derived functors}\label{ssder}

 Let $\cC$ be an Abelian category. A morphism $\varphi:K\ra L$ in $\sK(\cC)$ is a {\em quasi-\iso}, {\em qis} for short,
if the induced morphisms $H^{q}(K)\ra  H^{q}(L)$ are \iso s for all $q$. 
The derived category $\sD(\cC)$ is the category
obtained from $\sK(\cC)$ by regarding a qis as an \iso. We have the functors
\[
[k]:\sD(\cC)\lra \sD(\cC)\qquad\text{and}\qquad  H^{q}:\sD(\cC)\lra \cC.
\]

The following is a dual version of \cite[Proposition 1.7.5]{KS} and is proved as Proposition~\ref{casflasque}\,:

\begin{proposition}\label{propqis} Let 
\begin{equationth}\label{sexact}
0\lra J\overset{\iota}\lra K\overset{\varphi}\lra L\lra 0
\end{equationth}
be an exact sequence in $\sC(\cC)$. Let $M^{*}(\varphi)$ be the co-mapping cone of $\varphi$ and let
\[
\rho^{q}:J^{q}\lra M^{*}(\varphi)^{q}=K^{q}\oplus L^{q-1}\qquad\text{be defined by}\ \ z\mapsto (\iota(z),0).
\]
Then the following diagram is commutative and $\rho$ is a qis\,{\rm :}
\[
\SelectTips{cm}{}
\xymatrix
{0\ar[r]&J\ar[r]^-{\iota}\ar[d]^-{\rho}_{\wr}& K\ar[r]^-{\varphi}&L\ar[r]&0\\
 L[-1] \ar[r]^-{\b^{*}(\varphi)}&M^{*}(\varphi)\ar[ru]_-{\a^{*}(\varphi)}.}
\]
\end{proposition}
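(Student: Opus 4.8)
The plan is to mimic the proof of Proposition~\ref{casflasque}, since Proposition~\ref{propqis} is precisely its abstract counterpart. Given the exact sequence \eqref{sexact}, I would first form the co-mapping cone $M^{*}(\varphi)$ and its associated exact sequence \eqref{sexactcom}, namely $0\to L[-1]\overset{\b^{*}}\to M^{*}(\varphi)\overset{\a^{*}}\to K\to 0$. One checks directly from the formulas $\rho^{q}(z)=(\iota(z),0)$ and $d(x,y)=(d_{K}x,\varphi(x)-d_{L}y)$ that $\rho$ is a morphism of complexes: indeed $d(\rho(z))=d(\iota(z),0)=(d_{K}\iota(z),\varphi(\iota(z)))=(\iota(d_{J}z),0)=\rho(d_{J}z)$, using $\varphi\circ\iota=0$. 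The commutativity of the stated diagram is then immediate: $\a^{*}\circ\rho=\iota$ by definition, and $\b^{*}(\varphi)\circ\varphi[-1]$ composed appropriately matches up (and $\a^{*}\circ\b^{*}=0$, $\varphi\circ\a^{*}=0$ in $\sK(\cC)$ as already recorded in Subsection~\ref{subseccom}).

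The core of the argument is the quasi-isomorphism claim, and here I would run the five lemma. From the short exact sequence \eqref{sexact} we get the long exact cohomology sequence of Proposition~\ref{proples}, and from \eqref{sexactcom} we get the long exact sequence \eqref{exactcom}. I would arrange these two long exact sequences as the rows of a ladder diagram, with vertical maps: the identity on the $H^{q}(K)$ terms, the identity on the $H^{q}(L)$ (resp.\ $H^{q-1}(L)$) terms, and $H^{q}(\rho):H^{q}(J)\to H^{q}(M^{*}(\varphi))$ in the middle. The only subtlety—exactly as in Proposition~\ref{casflasque}—is the square involving the connecting homomorphisms: in the top row the connecting map $\delta:H^{q-1}(L)\to H^{q}(J)$ is described in Proposition~\ref{proples}, while in \eqref{exactcom} the connecting map is $\varphi$ itself (as noted there: ``$\delta$ is given by $\varphi$''); one needs to verify that this square commutes up to sign, i.e.\ that $H^{q}(\rho)\circ\delta=\pm\b^{*}$. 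This is a diagram chase tracing an element $y\in L^{q-1}$ with $d y=0$ back through $\varphi$ to an $x\in K^{q-1}$ with $\varphi(x)=y$, then to $z\in J^{q}$ with $\iota(z)=d_{K}x$; unwinding, $\rho(z)=(\iota(z),0)=(d_{K}x,0)=d(x,y)-(0,\varphi(x)-d_{L}y)+\dots$ — the point being that $(x,y)$ is a cochain in $M^{*}(\varphi)$ mapping the class of $z$ to the class of $\b^{*}(y)$, up to the sign $(-1)$ coming from the shift, just as $\rho\circ\delta=-\b^{*}$ in Proposition~\ref{casflasque}. The five lemma then forces $H^{q}(\rho)$ to be an isomorphism for all $q$, which is exactly the assertion that $\rho$ is a qis.

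The main obstacle I anticipate is purely bookkeeping: getting the signs right in the connecting-homomorphism square, since the shift functor $[-1]$ introduces $(-1)$'s and the differential on $M^{*}(\varphi)$ mixes $d_{K}$, $d_{L}$ and $\varphi$. This is not conceptually hard—the five lemma is insensitive to a global sign on one vertical arrow or in one non-commuting square, just as was explicitly used in Proposition~\ref{casflasque}—but it requires care to state cleanly. I would therefore present the proof by saying: ``the proof is identical to that of Proposition~\ref{casflasque}, replacing $\scF^{\bullet}(X,X')\hookrightarrow\scF^{\bullet}(X)\to\scF^{\bullet}(X')$ by \eqref{sexact} and $\scF^{\bullet}(i)$ by $M^{*}(\varphi)$,'' then record the two long exact sequences \eqref{proples}-type and \eqref{exactcom}, note that the ladder commutes except at the square of connecting maps where $\rho\circ\delta=-\b^{*}$, and conclude by the five lemma. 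Since the excerpt explicitly flags Proposition~\ref{propqis} as ``a dual version of \cite[Proposition 1.7.5]{KS}'' and says it ``is proved as Proposition~\ref{casflasque},'' this short-and-pointerful style is exactly what the paper wants, and no genuinely new idea beyond the two cited results is needed.
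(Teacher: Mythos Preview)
Your proposal is correct and matches the paper's intended argument exactly: the paper gives no separate proof but simply states that Proposition~\ref{propqis} ``is proved as Proposition~\ref{casflasque},'' and your write-up carries out precisely that template---set up the ladder between the long exact sequence of Proposition~\ref{proples} and the sequence \eqref{exactcom}, verify $\rho\circ\delta=-\b^{*}$, and apply the five lemma. Your verification of the sign (via $\rho(z)=(d_{K}x,0)=d(x,0)-\b^{*}(y)$) is the right computation.
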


In the above situation, the distinguished cotriangle
\[
L[-1]\overset{h}\lra J\lra K\lra L
\]
is called the distinguished cotriangle associated with  \eqref{sexact}, where $h=\b^{*}(\varphi)\circ\rho^{-1}$.
The above distinguished cotriangle gives rise to a long exact sequence (cf. Proposition \ref{propcoh})
\[
\cdots\lra H^{q-1}(L)\overset{h}\lra H^{q}(J)\lra H^{q}(K)\lra H^{q}(L)\lra\cdots.
\]
Note that $h=-\delta$, where $\delta$ is the connecting morphism in Proposition \ref{proples}. This
sign difference occurs also in the case of mapping cones (cf. \cite[p.46]{KS}).


\begin{proposition}\label{propcom} 
Suppose we have a commutative diagram of complexes in $\sC(\cC)$\,{\rm :}
\[
\SelectTips{cm}{}
\xymatrix
@R=.7cm
{K\ar[r]^-{\varphi} \ar[d]^-{\kappa}& L\ar[d]^-{\lambda}
\\
K'\ar[r]^-{\varphi'} &L'.}
\]
Let $M^{*}(\varphi)$ and $M^{*}(\varphi')$ be co-mapping cones of $\varphi$ and $\varphi'$, \r. Then the collection $\mu=(\mu^{q}):M^{*}(\varphi)\ra M^{*}(\varphi')$ of morphisms $\mu^{q}:M^{*}(\varphi)^{q}\ra M^{*}(\varphi')^{q}$ given by
$(x,y)\mapsto (\kappa^{q}(x),\lambda^{q-1}(y))$ is a morphism of complexes. Moreover, 
if $\kappa$ and $\lambda$ are qis's, so is $\mu$. 
\end{proposition}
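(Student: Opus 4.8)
The plan is to follow the same pattern as the proofs of Propositions~\ref{casflasque} and \ref{propqis}: first check by hand that $\mu$ commutes with the differentials, and then realize $\mu$ as the middle vertical arrow of a morphism between the short exact sequences \eqref{sexactcom} for $\varphi$ and for $\varphi'$, so that the five lemma applied to the associated ladder of long exact cohomology sequences yields the claim. For the first assertion I would simply expand $d\circ\mu^{q}$ and $\mu^{q+1}\circ d$ on a general element $(x,y)\in K^{q}\oplus L^{q-1}$: the $K'$-components agree because $\kappa$ is a morphism of complexes, and the $L'$-components agree because $\lambda$ is a morphism of complexes together with the standing hypothesis $\lambda\circ\varphi=\varphi'\circ\kappa$. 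I expect no difficulty here.

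For the second assertion I would observe that the definitions of $\a^{*}$, $\b^{*}$, $\mu$, $\kappa$ and $\lambda[-1]$ make the diagram whose two rows are the exact sequences \eqref{sexactcom} for $\varphi$ (top) and for $\varphi'$ (bottom) and whose vertical arrows are $\lambda[-1]$, $\mu$, $\kappa$ (from left to right) commutative; indeed $\mu(0,y)=(0,\lambda^{q-1}y)$ and $\a^{*}\mu(x,y)=\kappa^{q}x$ give both squares at once. Applying Proposition~\ref{proples} to each row produces a ladder of long exact sequences in cohomology whose connecting morphisms, by the remark after \eqref{exactcom}, are $H^{q}(\varphi)$ and $H^{q}(\varphi')$. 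After identifying $H^{q}(L[-1])$ with $H^{q-1}(L)$, so that $\lambda[-1]$ induces the map induced by $\lambda$ and is therefore a qis whenever $\lambda$ is, the hypothesis that $\kappa$ and $\lambda$ are qis's says that every vertical arrow of the ladder other than those induced by $\mu$ is an isomorphism; the five lemma then forces the arrows induced by $\mu$ on $H^{q}$ to be isomorphisms for all $q$, i.e. $\mu$ is a qis.

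The statement is essentially routine, so I do not anticipate a genuine obstacle; the one point to be handled with care is the commutativity of the ladder square containing the connecting homomorphisms. That square is just the image under $H^{q}$ of the given commutative square $\lambda\circ\varphi=\varphi'\circ\kappa$, hence commutes, but one should keep track of the degree shift in the identification $H^{q}(L[-1])\cong H^{q-1}(L)$ and of the sign convention for $d_{L[-1]}$ when verifying this.
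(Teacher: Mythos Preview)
Your proposal is correct and follows essentially the same approach as the paper: the paper's proof simply says the first part is straightforward and, for the second, to compare the exact sequences \eqref{exactcom} for $\varphi$ and $\varphi'$ and apply the five lemma. You have merely spelled out in more detail how that comparison is set up via the morphism of short exact sequences \eqref{sexactcom}, which is exactly how \eqref{exactcom} arises.
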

\begin{proof} The first part is straightforward. For the second part, compare the exact sequences 
\eqref{exactcom} for $\varphi$ and $\varphi'$ and apply the five lemma.
\end{proof}

\paragraph{Derived functors\,:} For an Abelian category $\cC$, we donote by $\sD^{+}(\cC)$  the full subcategory of $\sD(\cC)$ consisting of complexes bounded
below. 

Let $F:\cC\ra\cC'$ be a left exact functor of Abelian categories. 
If there exists an ``$F$-injective'' subcategory $\cI$, we may define the right derived functor
\[
\bm{R} F:\sD^{+}(\cC)\lra \sD^{+}(\cC')\qquad\text{by}\quad \bm R F(K)=F(I),\ \ K\underset{\op{qis}}{\overset\sim\lra} I.
\]

We define a functor $R^{q}F:\cC\ra\cC'$ as the composition
\[
 \cC\lra \sD^{+}(\cC)\overset{\bm R F}\lra \sD^{+}(\cC')\overset{H^{q}}
\lra\cC',\quad \text{i.e.,}\quad R^{q}F(K)=H^{q}(\bm R F(K))=H^{q}(F(I)).
\]

\paragraph{Cohomology of sheaves\,:} 
For a topological space $X$, we denote by $\cS h(X)$ the category of sheaves of Abelian groups on $X$. 
We also denote by $\cA$ the category of Abelian groups. For an open set $X'$
in $X$, we have the functor
\[
\vG(X,X';\ ):\cS h(X)\lra\cA
\]
defined by $\vG(X,X';\scS)=\scS(X,X')$.
The subcategory of flabby sheaves is injective for this functor. For $\scS$ in $\cS h(X)$,
\[
\begin{aligned}
\bm R\vG(X,X';\scS)&=\vG(X,X';\scF^{\bullet})\qquad\text{and}\\
R^{q}\vG(X,X';\scS)&=H^{q}(\vG(X,X';\scF^{\bullet}))\simeq H^{q}(X,X';\scS),
\end{aligned}
\]
where $\scS\underset{\rm qis}{\overset\sim\lra}\scF^{\bullet}$ is a flabby resolution.

\subsection{Cohomology for an open embedding as that of a co-mapping cone}\label{ssrelcohcom}

Let $\scK=(\scK^{\bullet},d_{\scK})$ be a complex of sheaves on a topological space $X$. Also let $X'$ be an open set in $X$
with inclusion $i:X'\hra X$. Denoting by $i^{-1}:\scK^{\bullet}(X)\ra \scK^{\bullet}(X')$ the pull-back
(restriction in this case) of
sections, we have the co-mapping cone $M^{*}(i^{-1})$.
Thus
\[
M^{*}(i^{-1})^{q}=\scK^{q}(X)\oplus \scK^{q-1}(X')
\]
and the differential $d:M^{*}(i^{-1})^{q}\ra M^{*}(i^{-1})^{q+1}$ is given by
\[
d(s,t)=(ds,i^{-1}s-dt).
\]
Hence the complex $M^{*}(i^{-1})$ is identical with $\scK^{\bullet}(i)$ in Subsection \ref{ssoemb}.
Moreover, setting $\V^{\star}=\{V_{0},V_{1}^{\star}\}$, $\V'=\{V_{0}\}$, 
 $V_{0}=X'$ and $V_{1}^{\star}=X$, we have the following (cf. Definition~\ref{defrelcose} and \eqref{rel=cmc})\,:
 \paragraph{Two interpretations of the cohomology $H^{q}_{d_{\scK}}(i)$\,:}
\begin{equationth}\label{comaprel}
M^{*}(i^{-1})=\scK^{\bullet}(i)=\scK^{\bullet}(\V^{\star},\V')\quad\text{and}\quad H^{q}(M^{*}(i^{-1}))=H^{q}_{d_{\scK}}(i)=H^{q}_{D_{\scK}}(X,X').
\end{equationth}

We have the exact sequence  (cf. the proof of Proposition \ref{propcoh})
\[
0\lra \scK^{\bullet}(X')[-1]\overset{\b^{*}}\lra M^{*}(i^{-1})\overset{\a^{*}}\lra \scK^{\bullet}(X)\lra 0,
\]
where $\b^{*}(t)=(0,t)$ and $\a^{*}(s,t)=s$. From this we have the exact sequence
\begin{equationth}\label{exopen}
\cdots \lra H^{q-1}_{d_{\scK}}(X')\overset{\b^{*}}\lra H^{q}(M^{*}(i^{-1}))\overset{\a^{*}}\lra H^{q}_{d_{\scK}}(X)
\overset{i^{-1}}\lra H^{q}_{d_{\scK}}(X')\lra\cdots,
\end{equationth}
which is identical with \eqref{exactcomemb}.
Note that the sequences 
\eqref{exopen} and \eqref{lexactrelD2}
are essentially the same, except $\b^{*}=-\delta$.

\vv

For a sheaf $\scS$ on $X$ and an open set $X'$ in $X$, we have the exact sequence 
\begin{equationth}\label{exactderrel}
0\lra\bm R\vG(X,X';\scS)\lra \bm R\vG(X;\scS)\lra\bm R\vG(X';\scS)\lra 0.
\end{equationth}

The following are  expressions of Theorem \ref{th}, its proof  and Theorem \ref{thdrelsoft} in the context of this section\,:

\begin{theorem}\label{thder}  Suppose $0\ra\scS\ra\scK^{\bullet}$ is a  resolution of $\scS$ \st\
 $H^{q_{2}}(X;\scK^{q_{1}})=0$ and $H^{q_{2}}(X';\scK^{q_{1}})=0$ for $q_{1}\ge 0$ and $q_{2}\ge 1$.
 Then
\[
M^{*}(i^{-1})\underset{\rm qis}\simeq\bm R\vG(X,X';\scS) \quad\text{and}\quad 
H^{q}(M^{*}(i^{-1}))\simeq H^{q}(X,X';\scS).
\]
\end{theorem}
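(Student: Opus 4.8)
The plan is to deduce Theorem~\ref{thder} from the material already in place, essentially by reinterpreting the proof of Theorem~\ref{th} in the language of co-mapping cones. First I would recall from Subsection~\ref{ssrelcohcom} that, by \eqref{comaprel}, the complex $M^{*}(i^{-1})$ is literally the same as $\scK^{\bullet}(i)$, so that $H^{q}(M^{*}(i^{-1}))=H^{q}_{d_{\scK}}(i)$; hence the second assertion is nothing but Theorem~\ref{th}, already proved. The only genuinely new content is the upgrade of the numerical isomorphism to a quasi-isomorphism statement in $\sD^{+}(\cA)$, i.e. that $M^{*}(i^{-1})$ represents $\bm R\vG(X,X';\scS)$.

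For that upgrade the key step is to produce, by Proposition~\ref{propexistfl}, a flabby resolution $0\ra\scS\ra\scF^{\bullet}$ together with a morphism of resolutions $\kappa:\scK^{\bullet}\ra\scF^{\bullet}$. Applying the section functors gives $\kappa:\scK^{\bullet}(X)\ra\scF^{\bullet}(X)$ and $\kappa':\scK^{\bullet}(X')\ra\scF^{\bullet}(X')$, compatible with the restriction maps $i^{-1}$, so Proposition~\ref{propcom} yields an induced morphism of co-mapping cones $\mu:M^{*}(i^{-1}_{\scK})\ra M^{*}(i^{-1}_{\scF})$. Now $\kappa$ and $\kappa'$ are quasi-isomorphisms: this is exactly the content of Theorem~\ref{gnatisos}.\,2 applied on $X$ and on $X'$, whose hypothesis $H^{q_{2}}(X;\scK^{q_{1}})=0=H^{q_{2}}(X';\scK^{q_{1}})$ for $q_{2}\ge1$ is the standing assumption. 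Hence by the second part of Proposition~\ref{propcom}, $\mu$ is a quasi-isomorphism. On the flabby side, Proposition~\ref{casflasque} gives a quasi-isomorphism $\rho:\scF^{\bullet}(X,X')\overset\sim\ra M^{*}(i^{-1}_{\scF})=\scF^{\bullet}(i)$, and by definition $\scF^{\bullet}(X,X')=\vG(X,X';\scF^{\bullet})$ computes $\bm R\vG(X,X';\scS)$. Chaining $M^{*}(i^{-1}_{\scK})\overset{\mu}{\underset{\rm qis}\simeq}M^{*}(i^{-1}_{\scF})\overset{\rho^{-1}}{\underset{\rm qis}\simeq}\vG(X,X';\scF^{\bullet})$ in $\sD^{+}(\cA)$ gives the first assertion; taking $H^{q}$ and invoking $R^{q}\vG(X,X';\scS)\simeq H^{q}(X,X';\scS)$ from Subsection~\ref{ssder} gives the second.

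I expect the main obstacle to be purely bookkeeping rather than conceptual: verifying that the morphism $\mu$ of Proposition~\ref{propcom} is genuinely the one induced by $\kappa$ at the level of the explicit description of $\scK^{\bullet}(i)$, i.e. that $\mu^{q}=\kappa^{q}\oplus(\kappa')^{q-1}$ matches the map $\kappa(i)$ used in the proof of Theorem~\ref{th}, and checking sign conventions in $L[-1]$ versus $\scK[-1]^{\bullet}$ so that the diagrams really commute. Once that identification is made, everything else is an application of results already established (Propositions~\ref{propexistfl}, \ref{casflasque}, \ref{propcom} and Theorem~\ref{gnatisos}), plus the remark that flabby resolutions compute $\bm R\vG$. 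A brief alternative, which I would mention, is to apply Proposition~\ref{propqis} directly to the flabby resolution: the exact sequence \eqref{exactderrel} together with the general cotriangle machinery identifies $M^{*}(i^{-1})$ for a flabby complex with $\bm R\vG(X,X';\scS)$, and then the quasi-isomorphism $\mu$ transports this to the given $\scK^{\bullet}$.
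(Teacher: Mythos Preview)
Your proposal is correct and follows essentially the same route as the paper: produce a flabby resolution with $\kappa$ via Proposition~\ref{propexistfl}, use Theorem~\ref{gnatisos} to see that $\kappa$ and $\kappa'$ are quasi-isomorphisms, apply Proposition~\ref{propcom} to get $M^{*}(i^{-1}_{\scK})\underset{\rm qis}\simeq M^{*}(i^{-1}_{\scF})$, and then identify the latter with $\scF^{\bullet}(X,X')=\bm R\vG(X,X';\scS)$. The only cosmetic difference is that the paper invokes Proposition~\ref{propqis} for this last identification (your ``brief alternative''), whereas you cite its special case Proposition~\ref{casflasque}; these are the same step.
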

\begin{proof} By Proposition \ref{propexistfl}, there exist a flabby resolution $0\ra\scS\ra\scF^{\bullet}$
and a morphism $\kappa:\scK^{\bullet}\ra \scF^{\bullet}$ \st\ the following diagram is commutative\,:
\[
\SelectTips{cm}{}
\xymatrix@C=.7cm
@R=.6cm
{ 0\ar[r]& \scS\ar[r] \ar@{=}[d]&\scK^{\bullet}\ar[d]^-{\kappa}\\
0\ar[r] & \scS \ar[r] & \scF^{\bullet}.}
\]
Then we have a commutative diagram of complexes\,:
\[
\SelectTips{cm}{}
\xymatrix
@R=.7cm
{\scK^{\bullet}(X)\ar[r]^-{i^{-1}_{\scK}} \ar[d]^-{\kappa}& \scK^{\bullet}(X')\ar[d]^-{\kappa'}
\\
\scF^{\bullet}(X)\ar[r]^-{i^{-1}_{\scF}} &\scF^{\bullet}(X').}
\]
By Theorem \ref{gnatisos} with $X'=\emptyset$, $\kappa$ is a qis. Likewise $\kappa'$ is also a qis.
Thus by Proposition~\ref{propcom},
$M^{*}(i^{-1}_{\scK})$ is quasi-isomorphic with $M^{*}(i^{-1}_{\scF})$.

On the other hand, 
the sequence \eqref{exactderrel} is represented by
\[
0\lra\scF^{\bullet}(X,X')\lra \scF^{\bullet}(X)\overset{i^{-1}_{\scF}}\lra \scF^{\bullet}(X')\lra 0
\]
and by Proposition \ref{propqis}, $M^{*}(i^{-1}_{\scF})$ is quasi-isomorphic with $\scF^{\bullet}(X,X')$.
\end{proof}

From Theorem~\ref{thder}, 
we have\,:

\begin{theorem}\label{threldRder} Let $(X,X')$ be a paracompact pair and $\scS$ a sheaf on $X$. 
 If $0\ra\scS\ra\scK^{\bullet}$ is a soft resolution  \st\ each $\scK^{q}|_{X'}$ is soft, then
\[
M^{*}(i^{-1})\underset{\rm qis}\simeq \bm R\vG(X,X';\scS)\quad\text{and}\quad 
H^{q}(M^{*}(i^{-1}))\simeq H^{q}(X,X';\scS).
\]
\end{theorem}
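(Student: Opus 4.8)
The plan is to obtain this as an immediate corollary of Theorem~\ref{thder}, whose only input is the acyclicity of the terms of the resolution on both $X$ and $X'$. Concretely, I would check that a soft resolution $0\ra\scS\ra\scK^{\bullet}$ with each $\scK^{q}|_{X'}$ soft satisfies $H^{q_{2}}(X;\scK^{q_{1}})=0$ and $H^{q_{2}}(X';\scK^{q_{1}})=0$ for all $q_{1}\ge 0$ and $q_{2}\ge 1$; once this is in place, Theorem~\ref{thder} yields both conclusions at once, namely the quasi-isomorphism $M^{*}(i^{-1})\simeq\bm R\vG(X,X';\scS)$ and the induced isomorphism $H^{q}(M^{*}(i^{-1}))\simeq H^{q}(X,X';\scS)$.

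For the vanishing on $X$: since $X$ is paracompact and each $\scK^{q_{1}}$ is soft, the fact recalled in the paragraph on soft sheaves --- a soft sheaf on a paracompact space has vanishing cohomology in positive degrees --- gives $H^{q_{2}}(X;\scK^{q_{1}})=0$ for $q_{2}\ge 1$. For the vanishing on $X'$: $X'$ is paracompact because $(X,X')$ is a paracompact pair, and by hypothesis each $\scK^{q_{1}}|_{X'}$ is soft on $X'$; applying the same acyclicity statement to the space $X'$ and the sheaf $\scK^{q_{1}}|_{X'}$, and recalling that by definition $H^{q_{2}}(X';\scK^{q_{1}})=H^{q_{2}}(X';\scK^{q_{1}}|_{X'})$, I get $H^{q_{2}}(X';\scK^{q_{1}})=0$ for $q_{2}\ge 1$. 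This is exactly what Theorem~\ref{thder} needs, so the conclusion follows.

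There is no genuine obstacle here; the proof is a verification of hypotheses. The one point worth making explicit is why the assumption ``each $\scK^{q}|_{X'}$ is soft'' must be carried as a separate hypothesis rather than deduced from ``$\scK^{q}$ is soft'': softness of a sheaf need not be inherited by its restriction to an open subset, and is so only under an extra paracompactness assumption on the open subsets of $X$ (as in the discussion preceding Theorem~\ref{thdRtypesoft}). Here that extra assumption is not made, so the softness of the restrictions is imposed directly, and it is precisely this that supplies the second family of vanishings above.
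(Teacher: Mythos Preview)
Your proof is correct and matches the paper's approach exactly: the paper simply says ``From Theorem~\ref{thder}, we have'' and states the result, leaving implicit the verification of the acyclicity hypotheses that you have spelled out. Your added remark on why softness of $\scK^{q}|_{X'}$ must be assumed separately is accurate and consistent with the paper's discussion of soft sheaves.
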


The  cohomology $H^{q}(M^{*}(i^{-1}))$
 is generalized to the cohomology of sheaf morphisms  in the following section.


\lsection{Cohomology for sheaf morphisms}\label{seccohsmor}

Although the presentation is somewhat different, the contents of this section are essentially in \cite{K2}, except for Theorem \ref{genreldR} below.

Throughout this section, we let $f:Y\ra X$ be a continuous map of topological spaces. 

\paragraph{Direct and inverse images\,:} 
For a sheaf $\scT$ on $Y$, the {\em direct image}
$f_{*}\scT$ is the sheaf on $X$ defined by the presheaf $U\mapsto \scT(f^{-1}U)$. We have
$(f_{*}\scT)(U)=\scT(f^{-1}U)$. Thus as a fuctor, $f_{*}$ is left exact and exact on the flabby sheaves.
If $\scG$ is a flabby sheaf on $Y$, $f_{*}\scG$ is a flabby sheaf on $X$. Thus if $0\ra\scT\ra\scG^{\bullet}$ is a
flabby resolution of $\scT$, $0\ra f_{*}\scT\ra f_{*}\scG^{\bullet}$ is a flabby resolution of $f_{*}\scT$.

For a sheaf $\scS$ on $X$, the {\em inverse image} $f^{-1}\scS$ is the sheaf on $Y$
defined by the presheaf $V\mapsto \underset\lra\lim\scS(U)$, where $U$ runs through the open sets in $X$
containing $f(V)$. For a point $y$ in $Y$, we have $(f^{-1}\scS)_{y}=\scS_{f(y)}$, which shows that 
$f^{-1}$ is an exact functor. There are  canonical morphisms
\begin{equationth}\label{dirinv}
\scS\lra f_{*}f^{-1}\scS\qquad\text{and}\qquad f^{-1}f_{*}\scT\lra\scT.
\end{equationth}
Thus  giving a morphism 
$\scS\ra f_{*}\scT$ is equivalent to giving a morphism $f^{-1}\scS\ra \scT$.

In the case $Y$ is a subset of $X$ with the induced topology and $f:Y\hra X$ is the inclusion, we have
$f^{-1}\scS=\scS|_{Y}$.

\paragraph{Mapping cylinders\,:} Following \cite{K2}, we define the {\em mapping cylinder} $Z(f)$ of $f$
as follows.
As a set, $Z(f)=X\amalg Y$ (disjoint union). For an open set $U$ in $X$, we set  $\tilde U=U\amalg f^{-1}U$.
We endow $Z(f)$
with the topology whose basis of open sets consists of 
$\{\, \tilde U\mid U\subset X\ \text{open sets}\,\}$ and $\{\,V\mid V\subset Y\ \text{open sets}\,\}$.
We have the closed embedding $\mu:X\hra Z(f)$ and the open embedding $\nu:Y\hra Z(f)$.
The projection $p:Z(f)\ra X$ is defined as the map that is the identity on $X$ and $f$ on $Y$.

\paragraph{Cohomology of sheaf morphisms\,:} 
Let $\scS$ and $\scT$ be sheaves on $X$ and $Y$, \r, and $\eta:\scS\ra f_{*}\scT$ a morphism.
We introduce a sheaf $\scZ^{*}(\scT\overset\eta\leftarrow\scS)$, which will be 
abbreviated as $\scZ^{*}(\eta)$.
It  is the sheaf on $Z(f)$ defined by 
the presheaf $\tilde U\mapsto \scS(U)$
and $V\mapsto \scT(V)$. The presheaf is a sheaf, i.e., 
 $\scZ^{*}(\eta)(\tilde U)=\scS(U)$
 and
$\scZ^{*}(\eta)(V)=\scT(V)$. The restriction $\scZ^{*}(\eta)(\tilde U)=\scS(U)\ra \scZ^{*}(\eta)(f^{-1}U)=
\scT(f^{-1}U)$
is given by $\eta$. 

\begin{definition}\label{defrgen} The cohomology of $f$ with coefficients in $\eta$ is defined by
\[
H^{q}(Y\overset{f}\ra X;\scT\overset\eta\leftarrow\scS)=H^{q}(Z(f),Z(f)\ssm X;\scZ^{*}(\scT\overset\eta\leftarrow\scS)).
\]
\end{definition}

In the sequel we abbreviate the cohomology as $H^{q}(f;\eta)$, if there is no fear of confusion.
In the case $Y=\emptyset$, we have $H^{q}(f;\eta)=H^{q}(X;\scS)$.

\begin{proposition}\label{propex} There is an exact sequence\,{\rm :}
\[
\cdots\lra H^{q-1}(Y;\scT)\lra H^{q}(f;\eta)\lra H^{q}(X;\scS)\lra H^{q}(Y;\scT)\lra\cdots.
\]
\end{proposition}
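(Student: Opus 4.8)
The plan is to unwind the definition of $H^{q}(f;\eta)$ and reduce the statement to Proposition~\ref{propfl}\,(3), the only nontrivial input being the identification of $H^{q}(Z(f);\scZ^{*}(\eta))$ with $H^{q}(X;\scS)$. As sets $Z(f)=X\amalg Y$, so $Z(f)\ssm X=Y$, and by Definition~\ref{defrgen} we have $H^{q}(f;\eta)=H^{q}(Z(f),Y;\scZ^{*}(\eta))$. Applying Proposition~\ref{propfl}\,(3) to the triple $(Z(f),Y,\emptyset)$ gives the exact sequence
\[
\cdots\lra H^{q-1}(Y;\scZ^{*}(\eta))\lra H^{q}(Z(f),Y;\scZ^{*}(\eta))\lra H^{q}(Z(f);\scZ^{*}(\eta))\lra H^{q}(Y;\scZ^{*}(\eta))\lra\cdots,
\]
where $H^{q}(Y;\scZ^{*}(\eta))$ means $H^{q}(Y;\scZ^{*}(\eta)|_{Y})$. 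Since $Y$ is open in $Z(f)$ and, by the very construction of $\scZ^{*}(\eta)$ (the presheaf $V\mapsto\scT(V)$ for $V\subset Y$), its restriction to $Y$ is $\scT$, we get $H^{q}(Y;\scZ^{*}(\eta))=H^{q}(Y;\scT)$. It remains to produce a canonical isomorphism $H^{q}(Z(f);\scZ^{*}(\eta))\simeq H^{q}(X;\scS)$.

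For this I would exploit the key feature of the mapping cylinder: for an open set $U$ of $X$ the smallest open subset of $Z(f)$ containing $U$ is $\tilde U=U\amalg f^{-1}U$ (from the basis of $Z(f)$, using $\bigcup_{i}\tilde U_{i}=\widetilde{\bigcup_{i}U_{i}}$), and in particular the only open subset of $Z(f)$ containing the closed set $X$ is $Z(f)$ itself. Hence for any sheaf $\scF$ on $Z(f)$ one has $(\mu^{-1}\scF)(X)=\scF(Z(f))$ and $(\mu^{-1}\scF)(U)=\scF(\tilde U)$ for $U$ open in $X$, so $\mu^{-1}$ carries flabby sheaves on $Z(f)$ to flabby sheaves on $X$, because $\scF(Z(f))\ra\scF(\tilde U)$ is surjective when $\scF$ is flabby. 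Now take a flabby resolution $0\ra\scZ^{*}(\eta)\ra\scF^{\bullet}$ on $Z(f)$. Since $\mu^{-1}$ is exact and $\mu^{-1}\scZ^{*}(\eta)=\scS$, the complex $0\ra\scS\ra\mu^{-1}\scF^{\bullet}$ is a flabby resolution of $\scS$ on $X$, and on global sections $\mu^{-1}\scF^{\bullet}(X)=\scF^{\bullet}(Z(f))$; taking cohomology yields $H^{q}(Z(f);\scZ^{*}(\eta))\simeq H^{q}(X;\scS)$. Substituting this into the exact sequence above gives the proposition. Equivalently, in the language of Subsection~\ref{ssder}, the sequence is $H^{q}$ of the distinguished cotriangle associated with \eqref{exactderrel} for $(Z(f),Y,\scZ^{*}(\eta))$, combined with $\bm R\vG(Z(f);\scZ^{*}(\eta))\simeq\bm R\vG(X;\scS)$.

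The routine parts are the unwinding of Definition~\ref{defrgen} in the first paragraph and the computation $\scZ^{*}(\eta)|_{Y}=\scT$. The step I expect to be the main obstacle is the identification $H^{q}(Z(f);\scZ^{*}(\eta))\simeq H^{q}(X;\scS)$: one must carefully check that $\tilde U$ is the smallest open neighbourhood of $U$ in $Z(f)$ and that, as a consequence, $\mu^{-1}$ preserves flabbiness and computes the same global sections here, after which everything reduces to the long exact sequence machinery already established in Proposition~\ref{propfl}.
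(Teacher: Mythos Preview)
Your proof is correct and follows the same overall strategy as the paper: both obtain the long exact sequence of the pair $(Z(f),Y)$ with coefficients in $\scZ^{*}(\eta)$, identify $\scZ^{*}(\eta)|_{Y}=\scT$, and then identify $H^{q}(Z(f);\scZ^{*}(\eta))\simeq H^{q}(X;\scS)$.

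The only difference is in this last identification. You use the closed embedding $\mu:X\hra Z(f)$, arguing that $\tilde U$ is the smallest open set in $Z(f)$ containing $U$, so $(\mu^{-1}\scF)(U)=\scF(\tilde U)$ and $\mu^{-1}$ preserves flabbiness. The paper instead uses the projection $p:Z(f)\ra X$: since $p^{-1}U=\tilde U$, one has $p_{*}\scF(U)=\scF(\tilde U)$, and direct images automatically preserve flabbiness, with $p_{*}\scZ^{*}(\eta)=\scS$ immediate from the definition. In fact your argument shows $\mu^{-1}=p_{*}$ as functors $\cS h(Z(f))\ra\cS h(X)$, so the two routes coincide; the paper's formulation via $p_{*}$ is just slightly quicker because the preservation of flabbiness and the computation of global sections come for free.
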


\begin{proof} Let $0\ra \scZ^{*}(\eta)\ra \scC^{\bullet}(\scZ^{*}(\eta))$ be the canonical
resolution  of $\scZ^{*}(\eta)$. Setting  $Z(f)'=Z(f)\ssm X$ (in fact it is equal to $Y$), we have the exact sequence 
\[
0\lra \scC^{\bullet}(\scZ^{*}(\eta))(Z(f),Z(f)')\lra \scC^{\bullet}(\scZ^{*}(\eta))(Z(f))\lra \scC^{\bullet}(\scZ^{*}(\eta))(Z(f)')\lra0,
\]
which gives rise to the exact sequence 
\[
\cdots\ra H^{q-1}(Z(f)';\scZ^{*}(\eta))
\overset{\delta}
\ra H^{q}(f;\eta)
\ra H^{q}(Z(f);\scZ^{*}(\eta))
\ra H^{q}(Z(f)';\scZ^{*}(\eta))\ra\cdots.
\]

We have $\scC^{\bullet}(\scZ^{*}(\eta))(Z(f))=p_{*}\scC^{\bullet}(\scZ^{*}(\eta))(X)$. Since 
$p_{*}\scC^{\bullet}(\scZ^{*}(\eta))$ is a flabby resolution of $p_{*}\scZ^{*}(\eta)=\scS$, there is a
canonical \iso\ $H^{q}(Z(f);\scZ^{*}(\eta))\simeq H^{q}(X;\scS)$. Since $\nu:Y\hra Z(f)$ is an open embedding, $\scC^{\bullet}(\scZ^{*}(\eta))(Z(f)')=\scC^{\bullet}(\nu^{-1}\scZ^{*}(\eta))(Y)
=\scC^{\bullet}(\scT)(Y)$, thus $H^{q}(Z(f)';\scZ^{*}(\eta))=H^{q}(Y;\scT)$.
\end{proof}


We denote by $\scH^{q}(f;\eta)$ the sheaf on $X$ defined by the presheaf 
$U\mapsto H^{q}(f|_{f^{-1}U};\eta)$.
In the case $\scT=f^{-1}\scS$, there is a canonical morphism $\scS\ra f_{*}\scT=f_{*}f^{-1}\scS$ and, when we take this as $\eta$, 
we denote
$H^{q}(f;\eta)$ and $\scH^{q}(f;\eta)$ by $H^{q}(f;\scS)$ and $\scH^{q}(f;\scS)$, \r.

In the case $f:Y\hra X$ is an open embedding,
we set $S=X\ssm Y$ and denote by 
$\scH^{q}_{S}(\scS)$ the sheaf defined by  the presheaf 
$U\mapsto H^{q}_{S\cap U}(U;\scS)$.

\begin{proposition}\label{propoe2} In the case $f:Y\hra X$ is an open embedding.  $\scT=f^{-1}\scS$ and $\eta$
the canonical morphism, 
there exist canonical \iso s
\[
H^{q}(f;\scS)\simeq H^{q}(X,Y;\scS)=H^{q}_{S}(X;\scS)\quad\text{and}\quad \scH^{q}(f;\scS)\simeq\scH^{q}_{S}(\scS).
\]
\end{proposition}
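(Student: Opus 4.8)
The plan is to realize $H^{q}(f;\scS)$ and $H^{q}(X,Y;\scS)$ as cohomologies of two flabby complexes linked by an evident inclusion, and then to run the five lemma on the associated long exact sequences; the one non-formal ingredient will be a vanishing that I will reduce, through Proposition~\ref{propex}, to the case of the identity map. Write $p\colon Z(f)\to X$ for the projection and $\mu\colon X\hra Z(f)$, $\nu\colon Y\hra Z(f)$ for the closed and open embeddings, so that $p\mu=\op{id}_{X}$, $p\nu=f$, $p^{-1}(U)=\tilde U$ for $U$ open in $X$, $Z(f)'=Z(f)\ssm X=\nu(Y)$, and $p^{-1}(Y)=\tilde Y=\mu(Y)\amalg\nu(Y)$. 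As in the proof of Proposition~\ref{propex}, $\scF^{\bullet}:=p_{*}\scC^{\bullet}(\scZ^{*}(\eta))$ is a flabby resolution of $\scS$ on $X$; moreover $\scF^{q}(X)=\scC^{q}(\scZ^{*}(\eta))(Z(f))$ and $\scF^{q}(Y)=\scC^{q}(\scZ^{*}(\eta))(\tilde Y)$. Hence, by Corollary~\ref{corfl}, $H^{q}(X,Y;\scS)$ is the cohomology of $\scF^{\bullet}(X,Y)=\{\,s\in\scC^{\bullet}(\scZ^{*}(\eta))(Z(f))\mid s|_{\tilde Y}=0\,\}$, while by definition $H^{q}(f;\scS)$ is the cohomology of $\scC^{\bullet}(\scZ^{*}(\eta))(Z(f),\nu(Y))=\{\,s\mid s|_{\nu(Y)}=0\,\}$.

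Since $\nu(Y)\subset\tilde Y$, the first complex is a subcomplex of the second, and this inclusion, together with the identity of $\scC^{\bullet}(\scZ^{*}(\eta))(Z(f))$ and the restriction $\scC^{\bullet}(\scZ^{*}(\eta))(\tilde Y)\to\scC^{\bullet}(\scZ^{*}(\eta))(\nu(Y))$, forms a morphism from the short exact sequence $0\to\scF^{\bullet}(X,Y)\to\scF^{\bullet}(X)\to\scF^{\bullet}(Y)\to0$ to the short exact sequence used in the proof of Proposition~\ref{propex}. By Proposition~\ref{proples} this gives a ladder of long exact sequences, relating the one of Proposition~\ref{propfl}~(3) for $(X,Y,\emptyset)$ (via Corollary~\ref{corfl}) with the one of Proposition~\ref{propex}, whose vertical maps are the identity on each $H^{q}(X;\scS)$, the desired morphism $H^{q}(X,Y;\scS)\to H^{q}(f;\scS)$, and, on the terms $H^{q}(Y;\scS)$, the restriction $r\colon H^{q}(\tilde Y;\scZ^{*}(\eta))\to H^{q}(\nu(Y);\scZ^{*}(\eta))$. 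By the five lemma it then suffices to show that $r$ is an \iso\ for all $q$, i.e.\ that $H^{q}(\tilde Y,\nu(Y);\scZ^{*}(\eta))=0$ for all $q$ (and then conclude from Proposition~\ref{propfl}~(3) for $(\tilde Y,\nu(Y),\emptyset)$).

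This vanishing is the heart of the matter. The point is that $\tilde Y=p^{-1}(Y)$ is by construction the mapping cylinder $Z(\op{id}_{Y})$, that $\nu(Y)=\tilde Y\ssm Y$, and that the definition of $\scZ^{*}$ identifies $\scZ^{*}(\eta)|_{\tilde Y}$ with $\scZ^{*}(\op{id}\colon\scS|_{Y}\leftarrow\scS|_{Y})$; therefore $H^{q}(\tilde Y,\nu(Y);\scZ^{*}(\eta))=H^{q}(\op{id}_{Y};\scS|_{Y})$ by Definition~\ref{defrgen}. Applying Proposition~\ref{propex} to $\op{id}_{Y}$, the map $H^{q}(Y;\scS|_{Y})\to H^{q}(Y;\scS|_{Y})$ that occurs there is, under the canonical identifications, the pull-back along $\op{id}_{Y}$ and hence the identity, so exactness forces $H^{q}(\op{id}_{Y};\scS|_{Y})=0$ for every $q$. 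This proves the first \iso; the assertion $H^{q}(f;\scS)\simeq H^{q}_{S}(X;\scS)$ is then the notational identity $H^{q}(X,Y;\scS)=H^{q}_{X\ssm Y}(X;\scS)$. Finally, for the sheaf statement one uses that the whole construction is natural with respect to restriction to open sets: for $U\subset X$ open, $f|_{f^{-1}U}\colon U\cap Y\hra U$ is again an open embedding, and the first part applied over $U$ with $\scS|_{U}$ yields \iso s $H^{q}(f|_{f^{-1}U};\scS)\simeq H^{q}(U,U\cap Y;\scS)=H^{q}_{S\cap U}(U;\scS)$ compatible with the restriction maps, so that passing to the associated sheaves gives $\scH^{q}(f;\scS)\simeq\scH^{q}_{S}(\scS)$. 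The main obstacle throughout is the vanishing $H^{q}(\op{id}_{Y};\scS|_{Y})=0$ — equivalently, the fact that the open inclusion $\nu(Y)\hra\tilde Y$ induces isomorphisms on the cohomology of $\scZ^{*}(\eta)$.
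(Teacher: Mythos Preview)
Your proof is correct and shares the overall architecture with the paper's argument (build a comparison morphism, then run the five lemma against the long exact sequence of Proposition~\ref{propex}), but the paper reaches the comparison morphism more directly. The paper's key observation is that, when $\scT=f^{-1}\scS$ and $\eta$ is the canonical morphism, one has $\scZ^{*}(\eta)=p^{-1}\scS$ on $Z(f)$; since $p$ maps the pair $(Z(f),Z(f)\ssm X)$ to $(X,Y)$, pullback along $p$ immediately gives $H^{q}(X,Y;\scS)\to H^{q}(Z(f),Z(f)\ssm X;\scZ^{*}(\eta))=H^{q}(f;\scS)$, and the five lemma finishes. You instead realize the comparison as an inclusion of section complexes (vanishing on $\tilde Y$ versus vanishing on $\nu(Y)$), which then obliges you to prove the auxiliary vanishing $H^{q}(\tilde Y,\nu(Y);\scZ^{*}(\eta))=0$; you dispatch this by recognizing $\tilde Y$ as $Z(\op{id}_{Y})$ and applying Proposition~\ref{propex} a second time, to $\op{id}_{Y}$. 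This is a nice self-contained manoeuvre, but it does more work than necessary: the identification $\scZ^{*}(\eta)=p^{-1}\scS$ bypasses the vanishing step entirely. A small compensating advantage of your route is that it never needs to invoke functoriality of relative sheaf cohomology under continuous maps of pairs---everything takes place inside the single flabby complex $\scC^{\bullet}(\scZ^{*}(\eta))$ on $Z(f)$.
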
 
\begin{proof} In this case  the projection $p$ is a map $(Z(f),Z(f)\ssm  X)\ra (X,Y)$ of pairs of spaces  and $\scZ^{*}(\eta)=
p^{-1}\scS$. Thus there is a canonical morphism 
\[
H^{q}(X,Y;\scS)\lra H^{q}(Z(f),Z(f)\ssm  X);\scZ^{*}(\eta))
=H^{q}(f;\scS). 
\]
 By Proposition \ref{propex} and the five lemma, we have the proposition.
\end{proof}
\begin{remark}\label{remcm} {\bf 1.}  The sheaf $\scZ^{*}(\eta)$ above  is defined in \cite[Definition 4.2]{K2} with a different notation and is called the mapping cylinder of $\eta$.
Also the cohomology in Definition \ref{defrgen} is the same as the one in \cite[Definition 5.1]{K2}, where it is denoted by $H^{q}(X\overset{f}\leftarrow Y,\scS\overset\eta\ra\scT)$.
\smallskip

\noindent
{\bf 2.} In \cite{Sa}, the sheaf $\scH^{q}_{S}(\scS)$ is denoted by $\op{Dist}^{q}(S,\scS)$
and is called the sheaf of $q$-distributions of $\scS$. It is a priori a sheaf on $X$, however it is  supported on $S$.
\smallskip

\noindent
{\bf 3.} The cohomology in Definition \ref{defrgen} is
  isomorphic with the one defined in \cite{KKK} with the same  notation.
Also the 
sheaf $\scH^{q}(f;\eta)$ above is 
isomorphic with the one  denoted by
$\scD ist^{q}_{f}(\scS\overset\eta\ra\scT)$
in \cite{KKK} (cf. Remark \ref{remkkk} below). 
\end{remark}


\paragraph{Co-mapping cylinder of a sheaf complex morphism\,:} 
Let $\scK$ and $\scL$ be complexes of sheaves on $X$ and $Y$, \r, and $\varphi:\scK\ra f_{*}\scL$ a morphism. We  introduce a complex of sheaves $(\scZ^{*}(\scL\overset\varphi\leftarrow\scK),d)$, which will be called the co-mapping cylinder of $\varphi$ and abbreviated as $(\scZ^{*}(\varphi),d)$.  It is the complex of sheaves on $Z(f)$ 
defined as follows.  We set
\[
\scZ^{*}(\varphi)=\mu_{*}\scK\oplus \mu_{*}f_{*}\scL[-1]\oplus \nu_{*}\scL
\]
so that
\[
\begin{cases}
\scZ^{*}(\varphi)^{q}(\tilde U)=\scK^{q}(U)\oplus (\scL^{q-1}\oplus \scL^{q})(f^{-1}U)\\
\scZ^{*}(\varphi)^{q}(V)=\scL^{q}(V).
\end{cases}
\]
Note that the restriction $\scZ^{*}(\varphi)(\tilde U)\ra \scZ^{*}(\varphi)(f^{-1}U)=\scL^{q}(f^{-1}U)$ is given by $(k,\ell',\ell)\mapsto\ell$.
We define the differential $d=d_{\scZ^{*}}:\scZ^{*}(\varphi)^{q}\ra \scZ^{*}(\varphi)^{q+1}$ by
\[
\begin{cases} d(k,\ell',\ell)=(dk,\varphi^{q}k-d\ell'-\ell,d\ell),\quad &(k,\ell',\ell)\in \scK^{q}(U)\oplus (\scL^{q-1}\oplus \scL^{q})(f^{-1}U)\\
d\ell=d\ell, &\ell\in\scL^{q}(V).
\end{cases}
\]

For the complex $(\scZ^{*}(\varphi),d)$, we have the cohomology $H^{q}_{d_{\scZ^{*}}}(Z(f),Z(f)\ssm X)$ of the
complex $\scZ^{*}(\varphi)(Z(f),Z(f)\ssm X)$ of sections of $\scZ^{*}(\varphi)$ that vanish on $Z(f)\ssm X$ (cf. Subsection~\ref{sscohsc}).
In the case $Y=\emptyset$, 
it reduces to $H^{q}_{d_{\scK}}(X)$.

This is essentially the construction given in  \cite[Definition 4.4]{K2}, where it is done for a morphism of resolutions
and is called the mapping cylinder of the morphism.
We  adopt a slightly different sign convention.

\paragraph{Co-mapping cone of a sheaf complex morphism\,:}  Let $\scK$, $\scL$  and $\varphi:\scK\ra f_{*}\scL$ be as above. 

\begin{definition}\label{defcmcm} The co-mapping cone of $\varphi$ is the complex of sheaves $(\scM^{*}(\varphi),d)$ on $X$ given by
\[
\begin{cases}\scM^{*}(\varphi)=\scK\oplus f_{*}\scL[-1]\\
d:\scM^{*}(\varphi)^{q}=\scK^{q}\oplus f_{*}\scL[-1]^{q}\lra \scM^{*}(\varphi)^{q+1}=\scK^{q+1}\oplus f_{*}\scL[-1]^{q+1}\\
(k,\ell')\mapsto (d_{\scK}k, \varphi k-d_{\scL}\ell'),\quad k\in\scK^{q},\ \ell'\in\scL[-1]^{q}=\scL^{q-1}.
\end{cases}
\]
\end{definition}

For an open set $U$ in $X$, we have $\scM^{*}(\varphi)(U)=\scK(U)\oplus \scL[-1](f^{-1}U)$. In particular, we have $\scM^{*}(\varphi)(X)=\scK(X)\oplus \scL[-1](Y)=M^{*}(\varphi)$, the co-mapping cone of the induced morphism
$\varphi:\scK(X)\ra \scL(Y)$. Thus there is an exact sequence (cf. \eqref{exactcom})
\begin{equationth}\label{exactcom2}
\cdots\lra H^{q-1}_{d_{\scL}}(Y)\overset{\b^{*}}\lra H^{q}(M^{*}(\varphi))\overset{\a^{*}}\lra 
H^{q}_{d_{\scK}}(X)\overset\varphi\lra H^{q}_{d_{\scL}}(Y)\lra\cdots.
\end{equationth}


From the construction, we have (cf. Subsection \ref{ssoemb} and \eqref{comaprel}\,:
\begin{proposition}\label{propopen} In the case $f:Y\hra X$ is an open embedding,  $\scL=f^{-1}\scK$ and
$\varphi$ the canonical morphism, 
\[
\scM^{*}(\varphi)=\scK(f) \quad\text{and}\quad  H^{q}(M^{*}(\varphi))=H^{q}_{d_{\scK}}(f).
\]
\end{proposition}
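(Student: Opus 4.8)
The plan is to unwind both identities directly from the definitions. Write $i=f\colon Y\hra X$ for the open embedding. Since $\scL=f^{-1}\scK$ and $f$ is a subspace inclusion, we have $\scL^{q}=f^{-1}\scK^{q}=\scK^{q}|_{Y}$, hence $(f_{*}\scL^{q})(U)=\scL^{q}(f^{-1}U)=\scK^{q}(U\cap Y)$ for every open $U\subseteq X$. Moreover, under the adjunction $\op{Hom}(\scK,f_{*}\scL)\cong\op{Hom}(f^{-1}\scK,\scL)$ the canonical morphism $\varphi\colon\scK\to f_{*}f^{-1}\scK$ corresponds to the identity of $f^{-1}\scK$, so on sections over $U$ it is simply the restriction $\scK^{q}(U)\to\scK^{q}(U\cap Y)$, $k\mapsto k|_{U\cap Y}$; over $U=X$ it is $i^{-1}\colon\scK^{\bullet}(X)\to\scK^{\bullet}(Y)$.

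For the first identity, I would first read off from Definition~\ref{defcmcm} that $\scM^{*}(\varphi)^{q}=\scK^{q}\oplus f_{*}\scL[-1]^{q}=\scK^{q}\oplus f_{*}(\scK^{q-1}|_{Y})$, so that $\scM^{*}(\varphi)^{q}(U)=\scK^{q}(U)\oplus\scK^{q-1}(U\cap Y)$ for every open $U\subseteq X$, with differential $(k,\ell')\mapsto(d_{\scK}k,\,k|_{U\cap Y}-d_{\scK}\ell')$ by the previous paragraph. On the other hand $f|_{f^{-1}U}\colon U\cap Y\hra U$ is again an open embedding, and the construction of Subsection~\ref{ssoemb} applied to $\scK|_{U}$ on the pair $(U,U\cap Y)$ yields exactly the complex $\scK^{q}(U)\oplus\scK^{q-1}(U\cap Y)$ with differential $(s,t)\mapsto(ds,\,i^{-1}s-dt)$. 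Hence the complex of sheaves $\scK(f)$ defined by $U\mapsto\scK^{\bullet}(f|_{f^{-1}U})$ --- which is indeed a complex of sheaves, being term by term $\scK^{q}\oplus f_{*}f^{-1}\scK^{q-1}$ --- agrees with $\scM^{*}(\varphi)$ both as presheaves and in their differentials, compatibly with restrictions; this is the asserted equality $\scM^{*}(\varphi)=\scK(f)$.

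For the second identity, recall from the remark just before the statement that $\scM^{*}(\varphi)(X)=\scK^{\bullet}(X)\oplus\scL[-1](Y)=M^{*}(\varphi)$, the co-mapping cone of the induced section-level morphism $\varphi\colon\scK^{\bullet}(X)\to\scL^{\bullet}(Y)$. By the first paragraph, $\scL^{\bullet}(Y)=\scK^{\bullet}(Y)$ and this induced morphism is the restriction $i^{-1}$, so $M^{*}(\varphi)=M^{*}(i^{-1})$; and by the identification recorded in Subsections~\ref{ssoemb} and~\ref{ssrelcohcom}, $M^{*}(i^{-1})=\scK^{\bullet}(i)=\scK^{\bullet}(f)$, the complex whose cohomology is $H^{q}_{d_{\scK}}(f)$ by definition. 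Therefore $H^{q}(M^{*}(\varphi))=H^{q}(\scK^{\bullet}(f))=H^{q}_{d_{\scK}}(f)$; this also follows from the first identity by passing to global sections.

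The argument is a bookkeeping of definitions, so no step is a genuine obstacle; the one point that repays a moment's attention is checking that the canonical sheaf morphism $\varphi$ restricts, on sections over each open $U$, to the honest restriction map $k\mapsto k|_{U\cap Y}$ --- this is what makes the sign conventions of Definition~\ref{defcmcm} and of Subsection~\ref{ssoemb} agree --- together with the identification of $f_{*}f^{-1}\scK^{q-1}$ with the sheaf $U\mapsto\scK^{q-1}(U\cap Y)$ that exhibits $\scK(f)$ as a complex of sheaves.
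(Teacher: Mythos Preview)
Your proposal is correct and follows essentially the same approach as the paper, which gives no explicit proof and simply writes ``From the construction, we have (cf.~Subsection~\ref{ssoemb} and \eqref{comaprel})'': you have merely written out the verification that the paper leaves implicit. One small remark: the paper only defines $\scK^{\bullet}(i)$ (for an open embedding $i$) as a complex of abelian groups, not as a complex of sheaves, so the first identity is most naturally read at the level of global sections --- i.e.\ as $M^{*}(\varphi)=\scK^{\bullet}(f)$, which is exactly \eqref{comaprel} once one observes that the section-level map $\varphi$ is $i^{-1}$ --- whereas you have (harmlessly, and correctly) upgraded it to a statement about the underlying complexes of sheaves.
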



\begin{proposition}\label{propcmc} The complex $\scZ^{*}(\varphi)(Z(f),Z(f)\ssm X)$ is identical with $M^{*}(\varphi)$ so that
\[
H^{q}_{d_{\scZ^{*}}}(Z(f),Z(f)\ssm X)= H^{q}(M^{*}(\varphi)).
\]
\end{proposition}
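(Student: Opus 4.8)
The plan is to show that the complex $\scZ^{*}(\varphi)(Z(f),Z(f)\ssm X)$ and the co-mapping cone $M^{*}(\varphi)$ of the induced morphism $\varphi:\scK^{\bullet}(X)\ra\scL^{\bullet}(Y)$ coincide term by term, together with their differentials; the asserted equality of cohomologies is then immediate, since $H^{q}_{d_{\scZ^{*}}}(Z(f),Z(f)\ssm X)$ is by definition (Subsection~\ref{sscohsc}) the cohomology of the complex $\scZ^{*}(\varphi)(Z(f),Z(f)\ssm X)$ and $H^{q}(M^{*}(\varphi))$ the cohomology of $M^{*}(\varphi)$.

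First I would record the relevant open sets of $Z(f)$. Taking $U=X$ in the definition of the topology on $Z(f)$ one has $\tilde X=X\amalg f^{-1}X=X\amalg Y=Z(f)$, so $Z(f)$ is itself a basic open set of the form $\tilde U$; since $\mu:X\hra Z(f)$ is closed and $\nu:Y\hra Z(f)$ open, $Z(f)\ssm X=Y$ as an open subspace. By the defining presheaf of $\scZ^{*}(\varphi)$ this gives
\[
\scZ^{*}(\varphi)^{q}(Z(f))=\scK^{q}(X)\oplus(\scL^{q-1}\oplus\scL^{q})(Y)=\scK^{q}(X)\oplus\scL^{q-1}(Y)\oplus\scL^{q}(Y),
\]
while $\scZ^{*}(\varphi)^{q}(Z(f)\ssm X)=\scZ^{*}(\varphi)^{q}(Y)=\scL^{q}(Y)$, and the restriction $\scZ^{*}(\varphi)^{q}(Z(f))\ra\scZ^{*}(\varphi)^{q}(Z(f)\ssm X)$ is, being the $U=X$ instance of the restriction $\scZ^{*}(\varphi)(\tilde U)\ra\scZ^{*}(\varphi)(f^{-1}U)$ described in the construction, the projection $(k,\ell',\ell)\mapsto\ell$ onto the last summand.

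Next I would pass to the relative subcomplex. A section $(k,\ell',\ell)$ vanishes on $Z(f)\ssm X=Y$ precisely when $\ell=0$, hence
\[
\scZ^{*}(\varphi)^{q}(Z(f),Z(f)\ssm X)=\{(k,\ell',0)\mid k\in\scK^{q}(X),\ \ell'\in\scL^{q-1}(Y)\}\cong\scK^{q}(X)\oplus\scL^{q-1}(Y),
\]
which is exactly $M^{*}(\varphi)^{q}=\scK^{q}(X)\oplus\scL[-1]^{q}(Y)$, the $q$-th term of $M^{*}(\varphi)$ as recorded just before the statement. Finally, restricting the differential $d_{\scZ^{*}}(k,\ell',\ell)=(dk,\varphi^{q}k-d\ell'-\ell,d\ell)$ to the locus $\ell=0$ yields $(k,\ell',0)\mapsto(dk,\varphi^{q}k-d\ell',0)$, i.e. under the identification above the map $(k,\ell')\mapsto(d_{\scK}k,\varphi^{q}k-d_{\scL}\ell')$, which is precisely the co-mapping cone differential of Subsection~\ref{subseccom}. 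Thus the two complexes are literally identical and the cohomologies agree.

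There is no genuine obstacle here; the whole argument is an unwinding of definitions, and the computational content is the already-recorded description of the sections and restrictions of $\scZ^{*}(\varphi)$. The only point that deserves a moment's attention is the role of the summand $-\ell$ in $d_{\scZ^{*}}$: it is exactly what keeps the relative condition $\ell=0$ stable under the differential (the entry $d\ell$ in the last slot becomes $0$ there), so that after restriction it contributes nothing and the differential inherited by $\scZ^{*}(\varphi)(Z(f),Z(f)\ssm X)$ is the standard co-mapping cone differential, with no sign adjustment needed.
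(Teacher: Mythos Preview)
Your proof is correct and follows essentially the same route as the paper: identify $Z(f)\ssm X=Y$, compute $\scZ^{*}(\varphi)^{q}(Z(f))$ and $\scZ^{*}(\varphi)^{q}(Y)$ together with the restriction $(k,\ell',\ell)\mapsto\ell$, and conclude that the kernel is $\scK^{q}(X)\oplus\scL^{q-1}(Y)=M^{*}(\varphi)^{q}$. Your explicit verification that the induced differential is the co-mapping cone differential is a nice extra detail that the paper leaves implicit.
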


\begin{proof} Noting that $Z(f)\ssm X=Y$, there is an exact sequence
\[
0\lra \scZ^{*}(\varphi)(Z(f),Z(f)\ssm X)\lra \scZ^{*}(\varphi)(Z(f))\overset{\nu^{-1}}\lra \scZ^{*}(\varphi)(Y).
\]
We have 
\[
\begin{cases}\scZ^{*}(\varphi)^{q}(Z(f))=\scK^{q}(X)\oplus\scL^{q-1}(Y)\oplus \scL^{q}(Y)\\
\scZ^{*}(\varphi)^{q}(Y)=\scL^{q}(Y).
\end{cases}
\]
Since  $\nu^{-1}(k,\ell',\ell)=\ell$, we have the proposition.
\end{proof}

\begin{remark}\label{remcohmap}
{\bf 1.} In \cite[Definition 4.7]{K2} the complex in Definition \ref{defcmcm} is defined for a morphism  of resolutions and is called the mapping cone of the morphism. We again adopt a  different
sign convention. 
\smallskip

\noindent
{\bf 2.}
The cohomology $H^{q}(M^{*}(\varphi))$ coincides with the one considered in \cite{BT} in the case $f:Y\ra X$ is a $C^{\infty}$ map of $C^{\infty}$ \mfd s, $\scK$ and $\scL$ are the de~Rham complexes on $X$ and $Y$, \r, and $\varphi:\scK\ra f^{-1}\scL$
is the pull-back by $f$ of differential forms.
\end{remark}

\paragraph{Generalized relative de Rham type theorem\,:}
Suppose we have two
 resolutions $0\ra\scS\overset{\imath}\ra\scK$ and $0\ra\scT\overset{\jmath}\ra\scL$
and  also morphisms $\eta:\scS\ra f_{*}\scT$ and
$\varphi:\scK\ra f_{*}\scL$ \st\ the following diagram is commutative\,{\rm :}
\[
\SelectTips{cm}{}
\xymatrix@C=.7cm
@R=.6cm
{ 0\ar[r]& \scS\ar[r]^-{\imath} \ar[d]^-{\eta}&\scK\ar[d]^-{\varphi}\\
0\ar[r] & f_{*}\scT \ar[r]^-{f_{*}\jmath} & f_{*}\scL.}
\]
In this case we say that $(\scK, \scL, \varphi)$ is a resolution of $(\scS, \scT, \eta)$.
We define a morphism $\zeta:\scZ^{*}(\eta)\ra\scZ^{*}(\varphi)^{0}$ by
\[
\begin{cases}
\scZ^{*}(\eta)(\tilde U)=\scS(U)\lra
\scZ^{*}(\varphi)^{0}(\tilde U)=\scK^{0}(U)\oplus \scL^{0}(f^{-1}U),\quad
s\mapsto (\imath s, (f_{*}\jmath)\eta s)\\
\scZ^{*}(\eta)(V)=\scT(V)\lra
\scZ^{*}(\varphi)^{0}(V)=\scL^{0}(V),\quad t\mapsto \jmath t.
\end{cases}
\]
Then the following is proved as \cite[Theorem 4.5]{K2}\,:

\begin{theorem}
If $(\scK, \scL, \varphi)$ is a resolution of $(\scS, \scT, \eta)$, then $0\ra\scZ^{*}(\eta)\overset\zeta\ra \scZ^{*}(\varphi)$ is a resolution of $\scZ^{*}(\eta)$.
\end{theorem}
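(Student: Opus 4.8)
The plan is to show that the sequence $0\to\scZ^{*}(\eta)\overset{\zeta}{\to}\scZ^{*}(\varphi)^{0}\overset{d}{\to}\scZ^{*}(\varphi)^{1}\overset{d}{\to}\cdots$ is exact, and since exactness of a complex of sheaves is a stalkwise condition I would verify it at each point of $Z(f)$. Recall that $Z(f)=X\amalg Y$ as a set, that $\nu\colon Y\hra Z(f)$ is an open embedding while $\mu\colon X\hra Z(f)$ is a closed one, and that for $x\in X$ the sets $\tilde U=U\amalg f^{-1}U$ with $U$ running over the open neighborhoods of $x$ in $X$ form a cofinal family of neighborhoods of $x$ in $Z(f)$. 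At a point $y\in Y$ the sheaves $\mu_{*}\scK$ and $\mu_{*}f_{*}\scL[-1]$ have vanishing stalk, since they are supported on the closed set $X$; hence $\scZ^{*}(\varphi)_{y}=(\nu_{*}\scL)_{y}=\scL_{y}$ as complexes, $\scZ^{*}(\eta)_{y}=\scT_{y}$, and $\zeta_{y}=\jmath_{y}$, so at $y$ the sequence becomes $0\to\scT_{y}\overset{\jmath_{y}}{\to}\scL^{0}_{y}\to\scL^{1}_{y}\to\cdots$, exact because $0\to\scT\overset{\jmath}{\to}\scL$ is a resolution.

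At a point $x\in X$, passing to the colimit over the $\tilde U\ni x$ and using $(\mu_{*}\scK)_{x}=\scK_{x}$ and $(\nu_{*}\scL)_{x}=(f_{*}\scL)_{x}$, I would identify $\scZ^{*}(\varphi)_{x}$ with the co-mapping cylinder of the induced morphism $\varphi_{x}\colon\scK_{x}\to(f_{*}\scL)_{x}$ of complexes of Abelian groups: explicitly $\scZ^{*}(\varphi)^{q}_{x}=\scK^{q}_{x}\oplus(f_{*}\scL^{q-1})_{x}\oplus(f_{*}\scL^{q})_{x}$ with $d(k,\ell',\ell)=(dk,\varphi_{x}k-d\ell'-\ell,d\ell)$, while $\scZ^{*}(\eta)_{x}=\scS_{x}$ and, because the hypothesis gives $(f_{*}\jmath)\circ\eta=\varphi\circ\imath$, the augmentation is $\zeta_{x}(s)=(\imath_{x}s,\varphi_{x}\imath_{x}s)$. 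The decisive ingredient is then a purely homological fact, independent of the rest: for any morphism $u\colon A\to B$ of complexes in an Abelian category, the co-mapping cylinder $A\oplus B[-1]\oplus B$, with differential $(a,b',b)\mapsto(da,ua-db'-b,db)$, is chain homotopy equivalent to $A$ via the projection $p\colon(a,b',b)\mapsto a$ and the inclusion $\iota\colon a\mapsto(a,0,ua)$, one checking $p\circ\iota=\Id$ and $d\circ h+h\circ d=\iota\circ p-\Id$ for the homotopy $h\colon(a,b',b)\mapsto(0,0,b')$.

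Granting this, since $\zeta_{x}=\iota\circ\imath_{x}$ and $0\to\scS\overset{\imath}{\to}\scK$ is a resolution (so $\imath_{x}$ exhibits $\scK_{x}$ as a resolution of $\scS_{x}$), the complex $\scZ^{*}(\varphi)_{x}$ is a resolution of $\scS_{x}$ with augmentation $\zeta_{x}$: the map $\zeta_{x}$ is injective, and because $p$ induces isomorphisms on cohomology while the co-mapping cylinder vanishes in negative degrees, one gets $H^{q}(\scZ^{*}(\varphi)_{x})=0$ for $q\ge1$ and an isomorphism $\scS_{x}\overset{\sim}{\to}H^{0}(\scZ^{*}(\varphi)_{x})=\Ker d^{0}$ induced by $\zeta_{x}$. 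Combining the two cases, $0\to\scZ^{*}(\eta)\overset{\zeta}{\to}\scZ^{*}(\varphi)$ is exact on all stalks, hence exact, which is exactly the claim that it is a resolution of $\scZ^{*}(\eta)$. The only real content is the co-mapping cylinder homotopy equivalence; the rest is bookkeeping with the stalks of $\mu_{*}$, $\nu_{*}$ and $f_{*}$ on the mapping cylinder $Z(f)$, and I expect the main place where care is needed to be the identification of the stalks at points of $X$ — pinning down the cofinal family of neighborhoods and matching the signs in the co-mapping cylinder differential with the sign convention used here for $d_{\scZ^{*}}$.
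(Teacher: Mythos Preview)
The paper does not give its own proof; it simply refers to \cite[Theorem~4.5]{K2}. Your stalkwise argument is correct and is the natural one: on $Y$ the claim reduces to $0\ra\scT\ra\scL^{\bullet}$ being a resolution, while on $X$ you identify the stalk complex with the co-mapping cylinder of $\varphi_{x}\colon\scK_{x}\to(f_{*}\scL)_{x}$ and show the latter is chain homotopy equivalent to $\scK_{x}$ via $p(a,b',b)=a$, $\iota(a)=(a,0,\varphi_{x}a)$ and the homotopy $h(a,b',b)=(0,0,b')$; combined with the exactness of $0\ra\scS_{x}\ra\scK^{\bullet}_{x}$ and the identity $\zeta_{x}=\iota\circ\imath_{x}$ (from $\varphi\circ\imath=(f_{*}\jmath)\circ\eta$), this yields the exactness at $x$. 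The bookkeeping you flag --- cofinality of the $\tilde U$ at points of $X$, and the stalks $(\mu_{*}\scK)_{x}=\scK_{x}$, $(\mu_{*}f_{*}\scL)_{x}=(f_{*}\scL)_{x}=(\nu_{*}\scL)_{x}$ --- is straightforward, and your homotopy identity $dh+hd=\iota p-\Id$ checks against the paper's sign convention for $d_{\scZ^{*}}$.
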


Using Proposition \ref{propcmc}, Theorem \ref{gnatisos} in our case reads\,:
\begin{theorem}\label{cocylnatisos} {\bf 1.} For any resolution $(\scK, \scL, \varphi)$   of $(\scS, \scT, \eta)$,
there is a canonical 
morphism
\[
\tilde\chi:H^{q}(M^{*}(\varphi))=H^{q}_{d_{\scZ^{*}}}(Z(f),Z(f)\ssm X)\lra 
H^{q}(Z(f),Z(f)\ssm X;\scZ^{*}(\eta))=H^{q}(f;\eta).
\]
\vv

\noindent
{\bf 2.} Moreover, if $H^{q_{2}}(Z(f),Z(f)\ssm X;\scZ^{*}(\varphi)^{q_{1}})=0$, for $q_{1}\ge 0$ and $q_{2}\ge 1$, then 
$\tilde\chi$ is an \iso.
\end{theorem}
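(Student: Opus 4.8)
The plan is to read off both assertions from Theorem~\ref{gnatisos}, specialized to the complex of sheaves $\scZ^{*}(\varphi)$ on the mapping cylinder $Z(f)$, with the open subset $Z(f)\ssm X$ — which equals $Y$, since $\nu:Y\hra Z(f)$ is an open embedding — in the role of $X'$ and $\scZ^{*}(\eta)$ in the role of $\scS$. No paracompactness of $Z(f)$ is needed, since Theorem~\ref{gnatisos} rests only on the Godement resolution and the two spectral sequences of a double complex. The substantive inputs beyond Theorem~\ref{gnatisos} are just the theorem immediately preceding the present one and Proposition~\ref{propcmc}.

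First I would verify the hypothesis of Theorem~\ref{gnatisos}.\,1 in this setting: by the preceding theorem, the assumption that $(\scK,\scL,\varphi)$ is a resolution of $(\scS,\scT,\eta)$ guarantees that $0\ra\scZ^{*}(\eta)\overset\zeta\ra\scZ^{*}(\varphi)$ is a resolution of the sheaf $\scZ^{*}(\eta)$ on $Z(f)$. Applying Theorem~\ref{gnatisos}.\,1 to the pair $(Z(f),Z(f)\ssm X)$ and the complex $\scZ^{*}(\varphi)$ then yields a canonical morphism
\[
\chi:H^{q}_{d_{\scZ^{*}}}(Z(f),Z(f)\ssm X)\lra H^{q}(Z(f),Z(f)\ssm X;\scZ^{*}(\eta))
\]
(the morphism $\chi_{(Z(f),Z(f)\ssm X)}$ of that theorem, i.e. $\psi^{-1}\circ\varphi$ with $\varphi$, $\psi$ the comparison morphisms of Subsection~\ref{sscohsc} induced by the embeddings into $\scC(\scZ^{*}(\varphi))^{\bullet}(Z(f),Z(f)\ssm X)$). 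By Proposition~\ref{propcmc} the source of $\chi$ is canonically $H^{q}(M^{*}(\varphi))$, and by Definition~\ref{defrgen} the target is $H^{q}(f;\eta)$; transporting $\chi$ along these identifications produces the morphism $\tilde\chi$ of part~1.

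For part~2, the extra hypothesis $H^{q_{2}}(Z(f),Z(f)\ssm X;\scZ^{*}(\varphi)^{q_{1}})=0$ for $q_{1}\ge 0$ and $q_{2}\ge 1$ is verbatim the vanishing condition of Theorem~\ref{gnatisos}.\,2 for the complex $\scZ^{*}(\varphi)$ on $(Z(f),Z(f)\ssm X)$; that theorem then makes $\chi$, and hence $\tilde\chi$, an \iso. I do not anticipate a genuine obstacle — the statement is a specialization of machinery already established. The one step deserving a line of care, rather than any new computation, is checking that the identification of Proposition~\ref{propcmc} is compatible with the embeddings underlying $\varphi$ and $\psi$: under $\scZ^{*}(\varphi)(Z(f),Z(f)\ssm X)=M^{*}(\varphi)$ the canonical embedding $\kappa$ of \eqref{emb2} must restrict to the evident embedding of $M^{*}(\varphi)$ into $\scC(\scZ^{*}(\varphi))^{\bullet}(Z(f),Z(f)\ssm X)$, and $\scC^{\bullet}(\scZ^{*}(\eta))(Z(f),Z(f)\ssm X)$ must sit inside that single complex as the kernel of the internal differential — both immediate from the defining formulas for $\scZ^{*}(\varphi)$, $\scZ^{*}(\eta)$ and $\zeta$.
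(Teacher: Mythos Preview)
Your proposal is correct and follows exactly the paper's approach: the paper introduces this theorem with the single sentence ``Using Proposition~\ref{propcmc}, Theorem~\ref{gnatisos} in our case reads'' and gives no further proof, so the argument is precisely the specialization you describe, invoking the preceding theorem for the resolution hypothesis and Proposition~\ref{propcmc} for the identification of the source.
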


In particular, if $\scK$ and $\scL$ are flabby resolutions, then $\scZ^{*}(\varphi)$
is a flabby resolution. Thus we have\,:
\begin{corollary}\label{corf} For a resolution $(\scK, \scL, \varphi)$   of $(\scS, \scT, \eta)$ \st\ $\scK$ and 
$\scL$ are flabby resolutions,
there is a canonical 
\iso\,{\rm :}
\[
\tilde\chi:H^{q}(M^{*}(\varphi))\overset\sim\lra H^{q}(f;\eta).
\]
\end{corollary}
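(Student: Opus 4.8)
The plan is to obtain the corollary as an immediate consequence of Theorem~\ref{cocylnatisos}.\,2, the only thing to verify being that the vanishing hypothesis there is automatically satisfied once $\scK$ and $\scL$ are flabby. Concretely, I would argue that under these hypotheses the sequence $0\ra\scZ^{*}(\eta)\overset\zeta\ra\scZ^{*}(\varphi)$ is a \emph{flabby} resolution of $\scZ^{*}(\eta)$, and then quote Proposition~\ref{propfl}.\,(2) on the space $Z(f)$.

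First, since $(\scK,\scL,\varphi)$ is a resolution of $(\scS,\scT,\eta)$ by assumption, the (unlabelled) theorem stated just above, proved as \cite[Theorem~4.5]{K2}, shows that $0\ra\scZ^{*}(\eta)\overset\zeta\ra\scZ^{*}(\varphi)$ is a resolution of $\scZ^{*}(\eta)$; in particular Theorem~\ref{cocylnatisos}.\,1 applies and produces the canonical morphism $\tilde\chi$. It remains to see that each term $\scZ^{*}(\varphi)^{q}$ is a flabby sheaf on $Z(f)$. Reading the definition of the co-mapping cylinder in degree $q$, we have $\scZ^{*}(\varphi)^{q}=\mu_{*}\scK^{q}\oplus\mu_{*}f_{*}\scL^{q-1}\oplus\nu_{*}\scL^{q}$. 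By hypothesis $\scK^{q}$, $\scL^{q-1}$ and $\scL^{q}$ are flabby; the direct image of a flabby sheaf along any continuous map is again flabby (as recalled in the discussion of direct images above), so $\mu_{*}\scK^{q}$, $\mu_{*}(f_{*}\scL^{q-1})$ and $\nu_{*}\scL^{q}$ are flabby; and a finite direct sum of flabby sheaves is flabby. Hence $\scZ^{*}(\varphi)^{q}$ is flabby for every $q$.

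Now I would apply Proposition~\ref{propfl}.\,(2) with the space $Z(f)$ and the open subset $Z(f)\ssm X$ (which equals $Y$ and is open, $\nu$ being an open embedding): for each $q_{1}\ge 0$ the sheaf $\scZ^{*}(\varphi)^{q_{1}}$ is flabby, so $H^{q_{2}}(Z(f),Z(f)\ssm X;\scZ^{*}(\varphi)^{q_{1}})=0$ for all $q_{2}\ge 1$. This is exactly the hypothesis of Theorem~\ref{cocylnatisos}.\,2, so the morphism $\tilde\chi\colon H^{q}(M^{*}(\varphi))\ra H^{q}(f;\eta)$ is an \iso, canonical because $\tilde\chi$ is. Alternatively, since $\scZ^{*}(\varphi)$ is now known to be a flabby resolution of $\scZ^{*}(\eta)$, one could invoke Corollary~\ref{corfl} together with Proposition~\ref{propcmc}, which identifies $\scZ^{*}(\varphi)(Z(f),Z(f)\ssm X)$ with $M^{*}(\varphi)$, to reach the same conclusion.

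There is no real obstacle here: the proof is a short assembly of results already in hand. The one point requiring a moment's care is the explicit degreewise description $\scZ^{*}(\varphi)^{q}=\mu_{*}\scK^{q}\oplus\mu_{*}f_{*}\scL^{q-1}\oplus\nu_{*}\scL^{q}$, which is what makes flabbiness transparent, together with checking that $(Z(f),Z(f)\ssm X)$ is the correct pair on which to run Proposition~\ref{propfl}.\,(2).
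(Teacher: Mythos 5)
Your proof is correct and follows the paper's own route: the paper likewise deduces the corollary from the observation that $\scZ^{*}(\varphi)$ is a flabby resolution of $\scZ^{*}(\eta)$ (hence the vanishing required in Theorem~\ref{cocylnatisos}.\,2 holds by Proposition~\ref{propfl}\,(2)), and you have merely spelled out the degreewise flabbiness of $\mu_{*}\scK^{q}\oplus\mu_{*}f_{*}\scL^{q-1}\oplus\nu_{*}\scL^{q}$, which the paper leaves implicit.
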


More generally we have the following theorem. Although it is proved in \cite[Theorem~5.5]{K2}, we give a proof in our context.

\begin{theorem}\label{th2}  
Suppose $(\scK, \scL, \varphi)$  is a resolution of $(\scS, \scT, \eta)$ \st\
 $H^{q_{2}}(X;\scK^{q_{1}})=0$ and $H^{q_{2}}(Y;\scL^{q_{1}})=0$ for $q_{1}\ge 0$ and $q_{2}\ge 1$.
 Then there is a canonical \iso\,{\rm :}
\[
H^{q}(M^{*}(\varphi))\simeq H^{q}(f;\eta).
\]
\end{theorem}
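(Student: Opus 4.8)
The strategy is to mirror exactly the proof of Theorem~\ref{thder} (which is the special case $f:Y\hra X$ an open embedding), replacing the appeal to Proposition~\ref{propexistfl} by the resolution-of-triples construction of \cite{K2} recorded just above as the theorem that $0\ra\scZ^{*}(\eta)\overset\zeta\ra\scZ^{*}(\varphi)$ is a resolution. First I would choose, for each of $\scS$ and $\scT$, its Godement (canonical) flabby resolution $0\ra\scS\ra\scC^{\bullet}(\scS)$ on $X$ and $0\ra\scT\ra\scC^{\bullet}(\scT)$ on $Y$; by the functoriality of $\scC^{\bullet}(\ )$ and the universal property \eqref{dirinv}, the morphism $\eta:\scS\ra f_{*}\scT$ extends to a morphism of complexes $\bar\varphi:\scC^{\bullet}(\scS)\ra f_{*}\scC^{\bullet}(\scT)$ making $(\scC^{\bullet}(\scS),\scC^{\bullet}(\scT),\bar\varphi)$ a resolution of $(\scS,\scT,\eta)$ with both rows flabby. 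Proposition~\ref{propexistfl} (applied on $X$ to $\scK^{\bullet}$ and on $Y$ to $\scL^{\bullet}$), together with the commutativity of the defining square of $\varphi$, yields a morphism of triples $(\scK,\scL,\varphi)\ra(\scC^{\bullet}(\scS),\scC^{\bullet}(\scT),\bar\varphi)$; call its components $\kappa:\scK^{\bullet}\ra\scC^{\bullet}(\scS)$ and $\kappa':\scL^{\bullet}\ra\scC^{\bullet}(\scT)$.

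Next I would pass to global sections and co-mapping cones. The morphism of triples induces a commutative square of complexes of Abelian groups
\[
\SelectTips{cm}{}
\xymatrix@R=.7cm{
\scK^{\bullet}(X)\ar[r]^-{\varphi}\ar[d]^-{\kappa}&\scL^{\bullet}(Y)\ar[d]^-{\kappa'}\\
\scC^{\bullet}(\scS)(X)\ar[r]^-{\bar\varphi}&\scC^{\bullet}(\scT)(Y),}
\]
where the bottom horizontal arrow is the map on sections induced by $\bar\varphi$ and the restriction $f^{-1}$. By Theorem~\ref{gnatisos}.\,2 applied with $X'=\emptyset$ — whose hypothesis $H^{q_{2}}(X;\scK^{q_{1}})=0$ for $q_{2}\ge1$ is exactly our assumption — the vertical map $\kappa$ is a quasi-\iso; likewise $\kappa'$ is a quasi-\iso\ because $H^{q_{2}}(Y;\scL^{q_{1}})=0$ for $q_{2}\ge1$. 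Hence by Proposition~\ref{propcom} the induced morphism of co-mapping cones $M^{*}(\varphi)\ra M^{*}(\bar\varphi)$ is a quasi-\iso.

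It then remains to identify $H^{q}(M^{*}(\bar\varphi))$ with $H^{q}(f;\eta)$. Since $\scC^{\bullet}(\scS)$ and $\scC^{\bullet}(\scT)$ are flabby resolutions, Corollary~\ref{corf} applies to the flabby resolution $(\scC^{\bullet}(\scS),\scC^{\bullet}(\scT),\bar\varphi)$ of $(\scS,\scT,\eta)$ and gives a canonical \iso\ $\tilde\chi:H^{q}(M^{*}(\bar\varphi))\overset\sim\ra H^{q}(f;\eta)$. Composing, we obtain the desired canonical \iso\ $H^{q}(M^{*}(\varphi))\simeq H^{q}(f;\eta)$. To see that the composite is canonical (independent of the choice of $\kappa,\kappa'$), one observes, as in the proof of Theorem~\ref{th} via the big ladder diagram, that both maps are compatible with the long exact sequences \eqref{exactcom2} for $M^{*}(\varphi)$ and Proposition~\ref{propex} for $H^{q}(f;\eta)$, and agree with the canonical isomorphisms $H^{q}_{d_{\scK}}(X)\simeq H^{q}(X;\scS)$ and $H^{q}_{d_{\scL}}(Y)\simeq H^{q}(Y;\scT)$ of Theorem~\ref{gnatisos}; the five lemma then forces canonicity.

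The main obstacle is not any single hard estimate but the bookkeeping of signs and the verification that the morphism of triples from Proposition~\ref{propexistfl} can be chosen compatibly on $X$ and on $Y$ with the commuting square defining $\varphi$ — i.e.\ that $\kappa'$ may be taken so that $f_{*}\kappa'\circ\varphi = \bar\varphi\circ\kappa$ as morphisms $\scK^{\bullet}\ra f_{*}\scC^{\bullet}(\scT)$. This follows because $\scC^{\bullet}(\ )$ is a functor into flabby resolutions and $f_{*}$ preserves flabbiness and exactness on flabby sheaves, so the lifting is obtained by the usual inductive extension along an exact sequence with flabby (hence $\vG(X,X';-)$-acyclic, and $f_{*}$-acyclic) target; once this compatibility is in place the rest is the diagram chase indicated above.
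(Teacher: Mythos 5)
Your reduction hinges on a morphism of triples $(\scK,\scL,\varphi)\ra(\scC^{\bullet}(\scS),\scC^{\bullet}(\scT),\bar\varphi)$, i.e.\ on cochain maps $\kappa:\scK^{\bullet}\ra\scC^{\bullet}(\scS)$ and $\kappa':\scL^{\bullet}\ra\scC^{\bullet}(\scT)$ lifting the identities of $\scS$ and $\scT$, and this is where the argument breaks. Proposition~\ref{propexistfl} does not supply such maps: it gives the quasi-isomorphisms $\scK^{\bullet}\ra\scC(\scK)^{\bullet}$ and $\scL^{\bullet}\ra\scC(\scL)^{\bullet}$ into the total Godement complexes of $\scK^{\bullet}$ and $\scL^{\bullet}$, not into the Godement resolutions of the kernel sheaves. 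Worse, the map you want need not exist at all: since $\scC^{0}(\scS)$ is the product of the skyscrapers with stalks $\scS_{x}$, a morphism $\scK^{0}\ra\scC^{0}(\scS)$ restricting to the canonical $\scS\ra\scC^{0}(\scS)$ amounts to a retraction $\scK^{0}_{x}\ra\scS_{x}$ of $\scS_{x}\hra\scK^{0}_{x}$ for every $x$; already for $X$ a point and the resolution $0\ra\Z\ra\Q\ra\Q/\Z\ra0$ there is no nonzero morphism $\Q\ra\Z$. The ``usual inductive extension'' you invoke requires the target objects to be injective; flabbiness only gives surjectivity of restriction maps on open sets, not the extension property along monomorphisms of sheaves, so it does not close this gap. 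A second, smaller, flaw: canonicity cannot be ``forced by the five lemma'' --- the five lemma shows that a map compatible with the two long exact sequences and with isomorphisms on the outer terms is an isomorphism, not that such a map is unique.

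The repair is to run your scheme with the resolutions the paper actually produces: take $\scC(\scK)^{\bullet}$ and $\scC(\scL)^{\bullet}$ from Proposition~\ref{propexistfl}, and use the naturality of the Godement construction (a morphism $\scK^{q}\ra f_{*}\scL^{q}$, i.e.\ $f^{-1}\scK^{q}\ra\scL^{q}$, induces $\scC^{\bullet}(\scK^{q})\ra f_{*}\scC^{\bullet}(\scL^{q})$ stalkwise) to obtain $\bar\varphi:\scC(\scK)^{\bullet}\ra f_{*}\scC(\scL)^{\bullet}$ compatible with $\eta$; then your chain --- quasi-isomorphism of co-mapping cones by Proposition~\ref{propcom} (using Theorem~\ref{gnatisos} with $X'=\emptyset$ on $X$ and on $Y$, exactly as you say), followed by Corollary~\ref{corf} for the flabby triple --- does go through, and it is essentially the paper's proof of Theorem~\ref{thder} transplanted to a general $f$. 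Note, however, that the paper proves Theorem~\ref{th2} by a different route: it never reduces to a flabby resolution of the triple, but starts from the canonical morphism $\tilde\chi$ of Theorem~\ref{cocylnatisos}, which exists for the given resolution $(\scK,\scL,\varphi)$ via the co-mapping cylinder $\scZ^{*}(\varphi)$ on $Z(f)$ and Proposition~\ref{propcmc}, and then sandwiches $\tilde\chi$ between the exact sequences \eqref{exactcom2} and that of Proposition~\ref{propex}; the hypotheses make the outer $\chi$'s isomorphisms and the five lemma shows $\tilde\chi$ is one. That way the isomorphism is canonical by construction, and no separate canonicity argument is needed.
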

\begin{proof} We have the diagram
\[
\SelectTips{cm}{}
\xymatrix
@C=.7cm
@R=.7cm
{\cdots\ar[r]&H^{q-1}_{d_{\scL}}(Y)\ar[r]^-{\b^{*}}\ar[d]^-{\chi}_{\wr}& H^{q}(M^{*}(\varphi))\ar[d]^-{\tilde\chi}\ar[r]^-{\a^{*}}&H^{q}_{d_{\scK}}(X)\ar[r]^-{\varphi}\ar[d]^-{\chi}_{\wr}&H^{q}_{d_{\scL}}(Y)\ar[r]\ar[d]^-{\chi}_{\wr}&\cdots\\
 \cdots\ar[r]&H^{q-1}(Y;\scT)\ar[r]^-{\delta} & H^{q}(f;\eta)\ar[r]&H^{q}(X;\scS)\ar[r]&
 H^{q}(Y;\scT)\ar[r]&\cdots,}
\]
where the rows are exact (cf. \eqref{exactcom2}
and Proposition \ref{propex}). The rectangles are commutative except for the left one, which is anti-commutative. By assumption the $\chi$'s are \iso s. Thus by the five lemma,
$\tilde\chi$ is an \iso, which together with  Proposition \ref{propcmc} implies the theorem.
\end{proof}

We may express the conclusion above as
\begin{equationth}\label{expdf}
M^{*}(\varphi)\underset{qis}\simeq \bm R\vG(Z(f),Z(f)\ssm X;\scZ^{*}(\eta)).
\end{equationth}

In the case $f:Y\ra X$ is an open embedding, $\scT=f^{-1}\scS$, $\scL=f^{-1}\scK$,  $\eta$ and
$\varphi$ are canonical morphisms, by Propositions \ref{propoe2} and 
\ref{propopen},
the above theorem reduces to  Theorem~\ref{th} and \eqref{expdf} is the one in
Theorem~\ref{thder}.

From Theorem \ref{th2}, we have the following, which generalizes Theorem \ref{thdrelsoft}\,:

\begin{theorem}[Generalized relative de~Rham type theorem]\label{genreldR} Suppose $X$ and $Y$ are paracompact. Then, for any resolution $(\scK, \scL, \varphi)$    of $(\scS, \scT, \eta)$ \st\ $\scK$ and $\scL$ are soft resolutions, 
there is a canonical \iso\,{\rm :}
\[
H^{q}(M^{*}(\varphi))\simeq H^{q}(f;\eta).
\]
\end{theorem}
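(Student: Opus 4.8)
The plan is to reduce the soft case to the hypotheses of Theorem \ref{th2} exactly as Theorem \ref{thdrelsoft} is deduced from Theorem \ref{th} in the absolute setting. First I would observe that, since $X$ and $Y$ are paracompact and $\scK$, $\scL$ are soft resolutions, each $\scK^{q_{1}}$ is a soft sheaf on $X$ and each $\scL^{q_{1}}$ is a soft sheaf on $Y$; hence by the vanishing property of soft sheaves recalled after the definition of softness, $H^{q_{2}}(X;\scK^{q_{1}})=0$ and $H^{q_{2}}(Y;\scL^{q_{1}})=0$ for $q_{1}\ge 0$ and $q_{2}\ge 1$. Thus the cohomological hypotheses of Theorem \ref{th2} are satisfied, and since $(\scK,\scL,\varphi)$ is by assumption a resolution of $(\scS,\scT,\eta)$, Theorem \ref{th2} applies verbatim and yields a canonical isomorphism $H^{q}(M^{*}(\varphi))\simeq H^{q}(f;\eta)$, which is precisely the assertion.

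Concretely, the steps in order are: (1) unwind the definition of ``soft resolution'' to get that each term $\scK^{q_{1}}$ (resp.\ $\scL^{q_{1}}$) is a soft sheaf; (2) invoke the fact, stated in the ``Soft sheaves'' paragraph of Section \ref{sechyp}, that a soft sheaf $\scG$ on a paracompact space has $H^{q}(X;\scG)=0$ for $q\ge 1$, applied on $X$ to the $\scK^{q_{1}}$ and on $Y$ to the $\scL^{q_{1}}$; (3) check that $(\scK,\scL,\varphi)$ being a resolution of $(\scS,\scT,\eta)$ means exactly the commutative-square data required in the hypothesis of Theorem \ref{th2}; (4) apply Theorem \ref{th2} to conclude. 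One may also note, for the record, that by \eqref{expdf} the conclusion can be upgraded to the quasi-isomorphism $M^{*}(\varphi)\underset{\rm qis}\simeq\bm R\vG(Z(f),Z(f)\ssm X;\scZ^{*}(\eta))$, and that when $f:Y\hra X$ is an open embedding with $\scL=f^{-1}\scK$ and canonical $\varphi$ this specializes, via Propositions \ref{propoe2} and \ref{propopen}, to Theorem \ref{thdrelsoft}.

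There is essentially no obstacle: the theorem is a direct corollary of Theorem \ref{th2} once the softness-implies-vanishing input is in place, and that input is already recorded in the text. The only point requiring a word of care is step (3) — making sure that the abstract notion ``resolution of $(\scS,\scT,\eta)$'' is literally the hypothesis consumed by Theorem \ref{th2} — but this is a matter of matching definitions rather than an argument. Accordingly the proof will be short.

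\begin{proof}
Since $X$ and $Y$ are paracompact and $0\ra\scS\ra\scK$, $0\ra\scT\ra\scL$ are soft resolutions, each $\scK^{q_{1}}$ is a soft sheaf on $X$ and each $\scL^{q_{1}}$ is a soft sheaf on $Y$. Hence, by the vanishing theorem for soft sheaves, $H^{q_{2}}(X;\scK^{q_{1}})=0$ and $H^{q_{2}}(Y;\scL^{q_{1}})=0$ for $q_{1}\ge 0$ and $q_{2}\ge 1$. Since $(\scK,\scL,\varphi)$ is a resolution of $(\scS,\scT,\eta)$, the hypotheses of Theorem \ref{th2} are fulfilled, and that theorem gives the canonical \iso\ $H^{q}(M^{*}(\varphi))\simeq H^{q}(f;\eta)$.
\end{proof}
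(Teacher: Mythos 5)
Your proposal is correct and follows the paper's own route: the theorem is stated there precisely as a consequence of Theorem~\ref{th2}, with the softness-on-paracompact-spaces vanishing $H^{q_{2}}(X;\scK^{q_{1}})=H^{q_{2}}(Y;\scL^{q_{1}})=0$ for $q_{2}\ge 1$ supplying the hypotheses, exactly as you argue.
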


\begin{remark}\label{remkkk} In \cite{KKK} it is shown that, given a triple $(\scS, \scT, \eta)$, there exists a 
resolution $(\scK, \scL, \varphi)$ \st\ $\scK$ and $\scL$ are flabby. Then this is used to define the cohomology 
$H^{q}(f;\eta)$
as
$H^{q}(M^{*}(\varphi))$ (cf. 
Corollary \ref{corf}, also Remarks \ref{remgenrel}
and 
\ref{remcm}.\,3). As noted in \cite{K2}, one of the advantages of defining  $H^{q}(f;\eta)$ as in Definition~\ref{defrgen} is that we can bypass the proof of the fact that the definition does not depend on the choice of the resolution $(\scK, \scL, \varphi)$ \st\ $\scK$ and $\scL$ are flabby.
\end{remark}

\lsection{Some particular cases}\label{secPC}

The \mfd s we consider below are assumed to have a countable basis,
thus they are paracompact and have only countably many connected components.
The coverings are assumed to be locally finite.
\vv

\noindent
{\bf I. de Rham complex}

Let $X$ be a $C^{\infty}$ \mfd\ of dimension $m$ and  $\scE^{(q)}_{X}$ the sheaf of $C^{\infty}$ $q$-forms
on $X$. The sheaves $\scE^{(q)}_{X}$ are fine and, by the Poincar\'e lemma, they give a fine resolution of the constant sheaf $\C_{X}$\,:
\[
0\lra\C\lra\scE^{(0)}\overset{d}\lra\scE^{(1)}\overset{d}\lra\cdots \overset{d}\lra\scE^{(m)}\lra 0,
\]
where we omitted the suffix $X$.

The {\em de~Rham cohomology} $H^{q}_{d}(X)$ of $X$ is the cohomology of $(\scE^{(\bullet)}(X),d)$. By Theorem~\ref{thdRtypesoft}, there
is a canonical \iso\ (de Rham theorem)\,:
\begin{equationth}\label{thdR}
H^{q}_{d}(X)\simeq H^{q}(X;\C_{X}).
\end{equationth}

Let $X'$ be an open set in $X$ and $(\W,\W')$ a pair of coverings for $(X,X')$. The {\em \v{C}ech-de~Rham cohomology} $H^{q}_{D}(\W,\W')$ on $(\W,\W')$ is the cohomology of $(\scE^{(\bullet)}(\W,\W'),D)$ with $D=\check{\delta}+(-1)^{\bullet}d$ (cf.
Definition \ref{defhypercohom}).

We say that $\W$ is {\em good} if every non-empty finite intersection $W_{\a_{0}\dots\a_{q}}$ is diffeomorphic with
$\R^{m}$. If $\W$ is good, then it is good for $\scE^{(\bullet)}$ (cf. Definition \ref{defg}).  From Theorem \ref{thsummary}, we have the following canonical \iso s\,:
\begin{enumerate}
\item[(1)] For any covering $\W$, $H^{q}_{d}(X)\overset\sim\ra H^{q}_{D}(\W)$.

\item[(2)] For a good covering $\W$,
\[
H^{q}_{D}(\W,\W')\overset\sim\longleftarrow H^{q}(\W,\W';\C_{X})
%
\simeq H^{q}(X,X';\C_{X}).
\]
\end{enumerate}

The relative de~Rham cohomology $H^{q}_{D}(X,X')$ is defined as in Section \ref{seccoffine} and, from Theorem \ref{thdrel} (see also Theorems \ref{thdrelsoft} and \ref{thdrelsoft2}), we have\,:

\begin{theorem}[Relative de~Rham theorem]
There is a canonical \iso\,{\rm :}
\[
H^{q}_{D}(X,X')\simeq H^{q}(X,X';\C_{X}).
\]
\end{theorem}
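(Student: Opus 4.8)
The plan is to recognize the Relative de~Rham theorem as a direct instance of Theorem~\ref{thdrel} applied to the de~Rham resolution. First I would verify the two hypotheses of Theorem~\ref{thdrel}: that $(X,X')$ is a paracompact pair, and that $0\ra\C_{X}\ra\scE^{(\bullet)}_{X}$ is a fine resolution with each $\scE^{(q)}_{X}|_{X'}$ fine. The first holds because $X$ has a countable basis and is a $C^{\infty}$ manifold, hence paracompact, and every open subset of such a manifold is again a manifold with a countable basis, hence $X'$ is paracompact as well. The second holds because the sheaves $\scE^{(q)}_{X}$ are fine (they are modules over the soft sheaf of rings $\scE^{(0)}_{X}$, which admits $C^{\infty}$ partitions of unity), the Poincar\'e lemma gives exactness of $0\ra\C\ra\scE^{(0)}\overset{d}\ra\scE^{(1)}\overset{d}\ra\cdots$, and the restriction $\scE^{(q)}_{X}|_{X'}$ is just $\scE^{(q)}_{X'}$, the sheaf of $C^{\infty}$ $q$-forms on the manifold $X'$, which is again fine.

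Next I would invoke Theorem~\ref{thdrel} with $\scS=\C_{X}$ and $\scK^{\bullet}=\scE^{(\bullet)}_{X}$ to obtain the canonical isomorphism
\[
H^{q}_{D_{\scE}}(X,X')\simeq H^{q}(X,X';\C_{X}).
\]
Then I would note that by Definition~\ref{defrelcose} and the discussion in Section~\ref{seccoffine}, the left-hand side $H^{q}_{D_{\scE}}(X,X')$ is precisely what is denoted $H^{q}_{D}(X,X')$ in the present paragraph: it is the cohomology of the complex $(\scE^{(\bullet)}(\V^{\star},\V'),D)$ with $V_{0}=X'$, $V_{1}^{\star}=X$, which is the relative de~Rham cohomology as just defined. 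This identification is a matter of unwinding notation, not of proof. Thus the stated isomorphism follows.

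I do not expect any genuine obstacle here, since all the substantive work has been done in Theorem~\ref{thdrel} (and, behind it, in Theorems~\ref{thdRtypesoft}, \ref{3.2rel} and Proposition~\ref{proppara}). The only points requiring a word of care are the verifications that the de~Rham complex furnishes a fine resolution of $\C_{X}$ and that $\scE^{(q)}_{X}|_{X'}$ remains fine after restriction to the open set $X'$; both are standard and were essentially recalled at the start of this subsection. Alternatively, one could cite Theorems~\ref{thdrelsoft} and \ref{thdrelsoft2} instead, since fine sheaves are soft, but invoking Theorem~\ref{thdrel} is the most direct route given that the sheaves $\scE^{(q)}_{X}$ are fine. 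The canonicity of the isomorphism is inherited directly from the canonicity asserted in Theorem~\ref{thdrel}.
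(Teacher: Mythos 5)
Your proposal is correct and follows essentially the same route as the paper: the theorem is obtained by applying Theorem~\ref{thdrel} to the fine resolution $0\ra\C_{X}\ra\scE^{(\bullet)}_{X}$ given by the Poincar\'e lemma, with the paracompactness of $(X,X')$ and the fineness of $\scE^{(q)}_{X}|_{X'}=\scE^{(q)}_{X'}$ checked exactly as you do. The only extra point the paper adds afterwards is that, since good coverings are cofinal, one can also bypass Proposition~\ref{proppara} via Theorems~\ref{thsummary} and \ref{3.2rel} (cf. Remark~\ref{remgood}), but this is optional and does not affect the validity of your argument.
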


Since $X$ always admits a good covering (in fact the good coverings are  cofinal in the set of coverings), 
we have the above theorem without going to the limit in the \v{C}ech cohomology (cf. Remark \ref{remgood}).

Note that $H^{q}(X,X';\C_{X})$ is canonically isomorphic with the relative singular (or simplicial) cohomology
$H^{q}(X,X';\C)$ with $\C$-coefficients on finite chains.

For more about \v{C}ech-de~Rham cohomology and its applications, we refer to \cite{BT}, \cite{Leh2}, \cite{Su2}, \cite{Su7} and  references therein.
\vv

\noindent
{\bf II. Dolbeault complex}

Let $X$ be a complex  \mfd\ of dimension $n$ and $\scE^{(p,q)}_{X}$ the sheaf of $C^{\infty}$ $(p,q)$-forms
on $X$. The sheaves $\scE^{(p,q)}_{X}$ are fine and, by the Dolbeault-Grothendieck lemma, they give a fine resolution of the  sheaf $\scO^{(p)}_{X}$ of \h\ $p$-forms\,:
\[
0\lra\scO^{(p)}\lra\scE^{(p,0)}\overset{\bp}\lra\scE^{(p,1)}\overset{\bp}\lra\cdots \overset{\bp}\lra\scE^{(p,n)}\lra 0. 
\]

The Dolbeault cohomology $H^{p,q}_{\bp}(X)$ of $X$ is the cohomology of the complex 
$(\scE^{(p,\bullet)}(X),\bp)$. By Theorem \ref{thdRtypesoft}, there
is a canonical \iso\ (Dolbeault theorem)\,:
\begin{equationth}\label{thD}
H^{p,q}_{\bp}(X)\simeq H^{q}(X;\scO^{(p)}).
\end{equationth}

Let $(\W,\W')$ be as above. The \v{C}ech-Dolbeault cohomology $H^{p,q}_{\bar\vt}(\W,\W')$ on $(\W,\W')$ is the cohomology of
$(\scE^{(p,\bullet)}(\W,\W'),\bar\vt)$ with $\bar\vt=\check{\delta}+(-1)^{\bullet}\bp$  (cf. Definition \ref{defhypercohom}).

We say that $\W$ is {\em Stein} if every non-empty finite intersection $W_{\a_{0}\dots\a_{q}}$ is biholomorphic with
a domain of holomorphy in $\C^{n}$ (cf. \cite{GrR}). If $\W$ is Stein, then it is good for $\scE^{(p,\bullet)}$.  From Theorem \ref{thsummary}, we have the following canonical \iso s\,:
\begin{enumerate}
\item[(1)] For any covering $\W$, $H^{p,q}_{\bp}(X)\overset\sim\ra H^{p,q}_{\bar\vt}(\W)$.

\item[(2)] For a Stein covering $\W$,
\[
H^{p,q}_{\bar\vt}(\W,\W')\overset\sim\longleftarrow H^{q}(\W,\W';\scO^{(p)})
\simeq H^{q}(X,X';\scO^{(p)}).
\]
\end{enumerate}

The relative  Dolbeault cohomology $H^{p,q}_{\bar\vt}(X,X')$ is defined as in Section \ref{seccoffine} and, from Theorem \ref{thdrel}  (see also Theorems \ref{thdrelsoft} and \ref{thdrelsoft2}), we have\,:

\begin{theorem}[Relative Dolbeault theorem]\label{threlD}
There is a canonical \iso\,{\rm :}
\[
H^{p,q}_{\bar\vt}(X,X')\simeq H^{q}(X,X';\scO^{(p)}).
\]
\end{theorem}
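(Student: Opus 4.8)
The plan is to obtain the relative Dolbeault theorem as an immediate application of the general relative de~Rham type theorem for fine resolutions (Theorem~\ref{thdrel}), exactly as in the parallel case of the relative de~Rham theorem proved just above. First I would recall the set-up: $X$ is a complex manifold of dimension $n$ with a countable basis, hence paracompact, and $X'$ is an open subset of $X$; thus $(X,X')$ is a paracompact pair. The complex to feed into Theorem~\ref{thdrel} is the Dolbeault complex $\scE^{(p,\bullet)}_{X}$ of sheaves of $C^{\infty}$ $(p,q)$-forms, with differential $\bp$, and the sheaf to be resolved is $\scS=\scO^{(p)}_{X}$, the sheaf of holomorphic $p$-forms. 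The relative Dolbeault cohomology $H^{p,q}_{\bar\vt}(X,X')$ is by definition $H^{q}_{D_{\scK}}(X,X')$ for $\scK^{\bullet}=\scE^{(p,\bullet)}_{X}$ (the relative cohomology for the sections of $\scK^{\bullet}$ introduced in Section~\ref{seccoffine}), so the statement to be proved is precisely the conclusion of Theorem~\ref{thdrel} specialized to this complex.

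The two things I must verify to invoke Theorem~\ref{thdrel} are: (i) $0\ra\scO^{(p)}_{X}\ra\scE^{(p,\bullet)}_{X}$ is a fine resolution, and (ii) each $\scE^{(p,q)}_{X}|_{X'}$ is fine. For (i), the sheaves $\scE^{(p,q)}_{X}$ are modules over the soft sheaf of rings $\scE^{(0,0)}_{X}=C^{\infty}_{X}$ (which admits partitions of unity subordinate to any covering), hence fine by the criterion recalled in Subsection~\ref{seccech} on complexes of fine sheaves; and exactness of
\[
0\lra\scO^{(p)}\lra\scE^{(p,0)}\overset{\bp}\lra\scE^{(p,1)}\overset{\bp}\lra\cdots\overset{\bp}\lra\scE^{(p,n)}\lra 0
\]
is the Dolbeault--Grothendieck lemma, stated in the excerpt. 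For (ii), the restriction $\scE^{(p,q)}_{X}|_{X'}$ is canonically the sheaf $\scE^{(p,q)}_{X'}$ of $C^{\infty}$ $(p,q)$-forms on the complex manifold $X'$, which is again fine by the same argument; alternatively, since $X$ is locally compact Hausdorff with a countable basis every open set is paracompact, and the restriction of a soft (a fortiori fine, via its $C^{\infty}_{X}$-module structure) sheaf to a locally closed set is soft/fine, as recalled in the ``Soft sheaves'' paragraph. Either way the hypotheses of Theorem~\ref{thdrel} are met.

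With (i) and (ii) in hand, Theorem~\ref{thdrel} gives a canonical isomorphism $H^{q}_{D_{\scK}}(X,X')\simeq H^{q}(X,X';\scS)$, which upon substituting $\scK^{\bullet}=\scE^{(p,\bullet)}_{X}$ and $\scS=\scO^{(p)}_{X}$ reads
\[
H^{p,q}_{\bar\vt}(X,X')\simeq H^{q}(X,X';\scO^{(p)}),
\]
as claimed. Since $X$ (and every open subset) admits Stein coverings and these are cofinal, one may, as in Remark~\ref{remgood}, realize this isomorphism through a fixed good covering via Theorems~\ref{thsummary} and \ref{3.2rel} without passing to the \v{C}ech limit, which is worth remarking for the sake of concreteness. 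I do not anticipate a genuine obstacle here: the proof is a verification that the Dolbeault complex satisfies the standing hypotheses of the already-proved Theorem~\ref{thdrel}. The only mild subtlety is the bookkeeping that $\scE^{(p,q)}_{X}|_{X'}\cong\scE^{(p,q)}_{X'}$ so that fineness is inherited on $X'$; this is routine once one notes that restriction of forms to an open set is compatible with the $C^{\infty}$-module structure and with $\bp$.
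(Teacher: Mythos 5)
Your proposal is correct and follows the paper's own route: the theorem is obtained by specializing the relative de~Rham type theorem for fine resolutions (Theorem~\ref{thdrel}) to the Dolbeault complex $\scE^{(p,\bullet)}_{X}$, which is a fine resolution of $\scO^{(p)}_{X}$ by the Dolbeault--Grothendieck lemma, with the restrictions $\scE^{(p,q)}_{X}|_{X'}$ again fine. Your closing remark on Stein coverings being cofinal, allowing the isomorphism without passing to the \v{C}ech limit, matches the paper's own comment following the theorem (cf. Remark~\ref{remgood}).
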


Since $X$ always admits a Stein covering (in fact the Stein coverings are  cofinal in the set of coverings), 
we have the above theorem without going to the limit in the \v{C}ech cohomology (cf. Remark \ref{remgood}).

For more about   \v{C}ech-Dolbeault cohomology we refer to  \cite{Su8} and \cite{Su10}.
Applications are given in \cite{ABST} for localization of Atiyah classes
and in \cite{HIS} for the Sato hyper\fcn\ theory.

\begin{remark} The seemingly standard proof  in the textbooks, e.g., 
\cite{GH}, \cite{Hir}, of the \iso\ as in Theorem \ref{thdRtypesoft} (thus \eqref{thdR} and \eqref{thD})
gives a correspondence same as the one given by the Weil lemma. Thus there is a sign difference as
explained in Remark \ref{remcano}.
\end{remark}

\vv

\noindent
{\bf III. Mixed complex}

Let $X$ be a complex \mfd.
We set 
\[
\scE^{(p,q)+1}_{X}=\scE^{(p+1,q)}_{X}\oplus \scE^{(p,q+1)}_{X}
\]
and consider the complex 
\begin{equationth}\label{mixed}
\cdots\overset{d}\lra \scE^{(p-2,q-2)+1}\overset{\bp+\partial}\lra \scE^{(p-1,q-1)}\overset{\bp\partial}\lra \scE^{(p,q)}\overset{d}\lra \scE^{(p,q)+1}\overset{\bp+\partial}\lra\scE^{(p+1,q+1)}\overset{\bp\partial}\lra\cdots.
\end{equationth}
From this we have the Bott-Chern, Aeppli and  third cohomologies and their relative versions.
For details and applications to the localization problem of Bott-Chern classes, we refer to \cite{CS}.\vv

\noindent
{\bf IV. Some others}

Here is another type of complex as considered in \cite{HIS}. 
We may discuss this in more general settings, however we consider the following situation for simplicity. 

Let $X$ be a $C^{\infty}$ \mfd\ and $\Omega$ an
open set in $X$ with inclusion $j:\Omega\hra X$. 
We consider
the sheaf $j_{!}j^{-1}\C_{X}$ on $X$, where $j_{!}$ denotes the direct image with proper supports (cf. \cite[\S 2.5]{KS}). We have $j_{!}j^{-1}\C_{X}|_{\Omega}=j^{-1}\C_{X}=\C_{\Omega}$ and $j_{!}j^{-1}\C_{X}|_{X\ssm\Omega}=0$.
The complex $j_{!}j^{-1}\scE^{(\bullet)}_{X}$ gives a resolution of $j_{!}j^{-1}\C_{X}$. For each $q$, the sheaf $j^{-1}\scE^{(q)}_{X}$ may be thought of as the sheaf $\scE^{(q)}_{\Omega}$ of $q$-forms on $\Omega$ so that it is soft (in fact fine).
Thus $j_{!}j^{-1}\scE^{(q)}_{X}$ is a $c$-soft sheaf on the paracompact \mfd\ $X$ and thus it is soft. In fact in our case
it is fine, as any of its sections may be thought of as a $q$-forms on $X$ with support in (the intersection of its domain of definition and) $\Omega$ and thus the sheaf $j_{!}j^{-1}\scE^{(q)}_{X}$ admits a
 natural action of the sheaf $\scE_{X}$ of $C^{\infty}$ \fcn s. If we set $d'=j_{!}j^{-1}d$ (it is in fact the usual exterior derivative $d$ on forms with
 support in $\Omega$) by Theorem \ref{thdRtypesoft}, there
is a canonical \iso\,:
\[
H^{q}_{d'}(X)\simeq H^{q}(X;j_{!}j^{-1}\C_{X}).
\]

If $X'$ is an open set in $X$, setting $D'=\check{\delta}+(-1)^{\bullet}d'$, from Theorem \ref{thdrel} (see also Theorems \ref{thdrelsoft} and \ref{thdrelsoft2}), we see that there is a canonical \iso\,:
\[
H^{q}_{D'}(X,X')\simeq H^{q}(X,X';j_{!}j^{-1}\C_{X}).
\]

Note that each element in $H^{q}_{D'}(X,X')$ is represented by a pair $(\xi_{1},\xi_{01})$, where
$\xi_{1}$ is a closed $q$-form on $X$ (or on any \nbd\ $V_{1}$ of $X\ssm X'$) with support in $\Omega$ and $\xi_{01}$ a $(q-1)$-form on
$X'$ with support in $X'\cap\Omega$ \st\ $d\xi_{01}=\xi_{1}$ on $X'$ (or on $V_{1}\cap X'$, cf. Corollary
\ref{corunique}).
\bibliographystyle{plain}

\vv

T. Suwa


Department of Mathematics 

Hokkaido University

Sapporo 060-0810, Japan

tsuwa@sci.hokudai.ac.jp
\end{document}